\newtheorem{thm}{Theorem}[section]
  \newtheorem*{thmA}{Theorem A}
  \newtheorem*{thmB}{Theorem B}
  \newtheorem*{thmC}{Theorem C}
  \newtheorem*{thmD}{Theorem D}
  \newtheorem{conjecture}[thm]{Conjecture}
 \newtheorem{lem}[thm]{Lemma}
 \newtheorem{assump}[thm]{Assumption}
 \newtheorem{prop}[thm]{Proposition}
 \theoremstyle{definition}
  \newtheorem*{ack}{Acknowledgments}
 \theoremstyle{remark}
 \newtheorem{rem}[thm]{Remark}
 \numberwithin{equation}{section}
\renewcommand{\(}{\left(}
\renewcommand{\)}{\right)}
\renewcommand{\-}{\overline}
\newcommand{\R}{\mathbb{R}}
\renewcommand{\H}{\mathbb{H}}
\renewcommand{\a}{\alpha}
\renewcommand{\d}{\delta}
\renewcommand{\k}{\kappa}
\renewcommand{\l}{\lambda}
\newcommand{\D}{\Delta}
\renewcommand{\t}{\theta}
\newcommand{\s}{\sigma}
\newcommand{\G}{\Gamma}
\renewcommand{\L}{\Lambda}
\newcommand{\ra}{\rightarrow}
\newcommand{\mrm}{\mathrm}
\newcommand{\divv}{\mrm{div}}
\begin{document}
\title[Geometric inequalities for static convex domains in hyperbolic space]{Geometric inequalities for static convex domains in hyperbolic space}

\author{Yingxiang Hu}
\address{School of Mathematics, Beihang University, Beijing 100191, P.R. China}
\email{\href{mailto:huyingxiang@buaa.edu.cn}{huyingxiang@buaa.edu.cn}}

\author{Haizhong Li}
\address{Department of Mathematical Sciences, Tsinghua University, Beijing 100084, P.R. China}
\email{\href{mailto:lihz@tsinghua.edu.cn}{lihz@tsinghua.edu.cn}}

\date{\today}
\keywords{Locally constrained curvature flow, static convex, geometric inequality, hyperbolic space}
\subjclass[2010]{53C21 53C24 53C44}

%%% ----------------------------------------------------------------------

\begin{abstract}
We prove that the static convexity is preserved along two kinds of locally constrained curvature flows in hyperbolic space. Using the static convexity of the flow hypersurfaces, we prove new family of geometric inequalities for such hypersurfaces in hyperbolic space.
\end{abstract}

\maketitle
\tableofcontents

\section{Introduction}\label{sec:1}

For any bounded domain $\Omega$ with smooth boundary $M=\partial \Omega$ in hyperbolic space $\mathbb H^{n+1}$, the $k$th quermassintegral $W_k$ is defined as the measure of the set of totally geodesic $k$-dimensional subspaces which intersect $\Omega$. It can be expressed as a linear combination of integral of $k$th mean curvatures and of the enclosed volume (see \cite{Sant2004})
\begin{align*}
& W_0(\Omega)= ~\mathrm{Vol}(\Omega),\qquad W_{1}(\Omega)=~\frac 1{n+1}|M|,\\
& W_{k+1}(\Omega)=~ \frac 1{n+1} \int_M E_k d\mu-\frac{k}{n+2-k}W_{k-1}(\Omega),\quad k=1,\cdots,n,
\end{align*}
where $E_k$ is the normalized $k$-th mean curvature of $M$. Along any outward normal variation with speed $\mathcal{F}$, the $k$th quermassintegral $W_k$ evolves by (see e.g. \cite[(3.5)]{WX14})
\begin{align}\label{s2:variation-quermassintegral}
\frac{d}{dt}W_k(\Omega_t)=~\frac{n+1-k}{n+1}\int_{M_t} E_k \mathcal{F}d\mu_t, \quad 0\leq k\leq n.
\end{align}

In order to establish the quermassintegral inequalities in hyperbolic space, one natural choice is the following globally constrained quermassintegral preserving flow
\begin{align}\label{s1:QP-MCF}
\frac{\partial}{\partial t}X=\( \frac{\int_{M_t}E_k^\frac{1}{k-l}E_{l}^{1-\frac{1}{k-l}}d\mu_t}{\int_{M_t}E_{l}d\mu_t}-\(\frac{E_k}{E_{l}}\)^\frac{1}{k-l}\) \nu, \quad 0\leq l<k\leq n,
\end{align}   
Along this flow, the $l$th quermassintegral $W_{l}(\Omega_t)$ is preserved while the $k$th quermassintegral $W_k(\Omega_t)$ is decreasing. Therefore, the smooth convergence of the flow \eqref{s1:QP-MCF} to geodesic spheres would imply the quermassintegral inequalities. For $l=0$ and $k=1$, this flow is called the volume preserving mean curvature flow, which was first studied by Cabezas-Rivas and Miquel \cite{Cabez07}. By imposing {\em h-convexity}\footnote{A closed hypersurface in hyperbolic space is called {\em h-convex} if its principal curvatures satisfy $\k_i\geq 1$ for all $i$.} on the initial hypersurface, they proved that the flow converges smoothly to a geodesic sphere as $t\ra \infty$. Later, the smooth convergence of the flow \eqref{s1:QP-MCF} with $0\leq l<k\leq n$ was established by Wang and Xia \cite{WX14}, which yields the following quermassintegral inequalities in hyperbolic space.

\begin{thmA}\cite{WX14}
	Let $\Omega$ be a bounded and h-convex domain with smooth boundary in $\mathbb H^{n+1}$. Then there holds
	\begin{align}\label{WX-ineq}
	W_k(\Omega) \geq f_k \circ f_{l}^{-1}(W_{l}(\Omega)), \quad 0\leq l<k\leq n.	
	\end{align}
	Equality holds in \eqref{WX-ineq} if and only if $\Omega$ is a geodesic ball. Here $f_k:[0,\infty)\ra \mathbb R^{+}$ is a monotone function defined by $f_k(r)=W_k(B_r)$, the $k$th quermassintegral for the geodesic ball of radius $r$, and $f_l^{-1}$ is the inverse function of $f_l$.
\end{thmA}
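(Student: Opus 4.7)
The plan is to prove Theorem A via the locally constrained quermassintegral-preserving flow \eqref{s1:QP-MCF}, following Wang-Xia. The argument proceeds in three stages: (i) verify that $W_l(\Omega_t)$ is preserved and $W_k(\Omega_t)$ is monotone non-increasing along the flow; (ii) establish smooth long-time existence of an h-convex solution; (iii) prove smooth convergence to a geodesic sphere, from which the sharp inequality \eqref{WX-ineq} and its rigidity statement follow.

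For stage (i), write the speed in \eqref{s1:QP-MCF} as $\mathcal{F} = \bar c - \phi$, where $\phi = (E_k/E_l)^{1/(k-l)}$ and $\bar c = \bigl(\int_{M_t}\phi\, E_l\, d\mu_t\bigr)/\bigl(\int_{M_t} E_l\, d\mu_t\bigr)$. Applying \eqref{s2:variation-quermassintegral} with index $l$ gives $\tfrac{d}{dt}W_l(\Omega_t)=0$ directly from the definition of $\bar c$. Using $E_k = \phi^{k-l} E_l$ and \eqref{s2:variation-quermassintegral} with index $k$, one obtains
\[
\frac{d}{dt}W_k(\Omega_t) = \frac{n+1-k}{n+1}\left(\bar c \int_{M_t}\phi^{k-l} E_l\, d\mu_t - \int_{M_t}\phi^{k-l+1} E_l\, d\mu_t\right),
\]
which is non-positive by Chebyshev's integral inequality for the comonotone functions $\phi$ and $\phi^{k-l}$ with respect to the positive measure $E_l\, d\mu_t$ (positivity of $E_l$ is guaranteed by h-convexity).

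The core analytic stage is (ii). The critical geometric step is preservation of h-convexity $\kappa_i \geq 1$, which I would prove by applying a tensor maximum principle to the evolution equation of the shifted Weingarten operator $h^i_j - \delta^i_j$; the specific form of the fully nonlinear speed $\phi$, combined with the constant sectional curvature $-1$ of $\mathbb{H}^{n+1}$, must produce the non-negative reaction terms required to close the estimate. h-convexity then yields star-shapedness with respect to any interior point, reducing the flow to a scalar parabolic equation for a radial function over $\mathbb{S}^n$. Uniform $C^0$ bounds follow from the conserved $W_l(\Omega_t)$, which traps the evolving hypersurface between two fixed concentric geodesic spheres; $C^1$ bounds then come from h-convexity; upper bounds on the principal curvatures are obtained via a maximum-principle argument applied to a standard auxiliary quantity involving the support function and the largest principal curvature; Krylov-Safonov together with Schauder theory then yield smooth estimates of all orders, hence long-time existence.

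For stage (iii), monotonicity and boundedness of $W_k(\Omega_t)$ force $\tfrac{d}{dt}W_k(\Omega_t)\to 0$, and the strict case of Chebyshev's inequality forces $\phi$ to become asymptotically constant. Combined with the uniform estimates, every smooth subsequential limit is a geodesic sphere whose radius $r_\infty$ is uniquely determined by $W_l(B_{r_\infty})=W_l(\Omega)$, i.e.\ $r_\infty = f_l^{-1}(W_l(\Omega))$; hence $W_k(\Omega)\geq W_k(B_{r_\infty}) = f_k \circ f_l^{-1}(W_l(\Omega))$. Rigidity follows because equality at $t=0$ forces $\tfrac{d}{dt}W_k\equiv 0$, so $\phi$ is constant on $M_0=\partial\Omega$, and an Alexandrov-type rigidity theorem in $\mathbb{H}^{n+1}$ identifies $\Omega$ as a geodesic ball. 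The main obstacle in the program is stage (ii): the preservation of h-convexity and the uniform upper curvature bounds under the fully nonlinear speed $\phi$ are delicate maximum-principle arguments in which the h-convex hypothesis and the sectional curvature $-1$ of $\mathbb{H}^{n+1}$ enter essentially.
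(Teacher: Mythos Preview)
Theorem~A is not proved in this paper; it is quoted from Wang--Xia \cite{WX14}, and the text merely records that the flow \eqref{s1:QP-MCF} was used there, and that an alternative proof via the locally constrained Brendle--Guan--Li flow \eqref{s1:BGL-flow} appears in \cite{Hu-Li-Wei2020}. There is thus no proof in the present paper to compare your proposal against.

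That said, your three-stage outline (monotonicity via Chebyshev with respect to the measure $E_l\,d\mu_t$, preservation of h-convexity and uniform curvature bounds by tensor maximum principle, subsequential convergence to a sphere forced by the equality case) is precisely the Wang--Xia scheme, and your monotonicity computation is correct. One terminological slip: you call \eqref{s1:QP-MCF} ``locally constrained'', but it is the \emph{globally} constrained flow (the constant $\bar c$ is an integral average over $M_t$); the locally constrained flows in this paper are \eqref{s1:BGL-flow}, \eqref{s1:SX-ICF} and \eqref{s1:locally-MCF}, whose speeds involve only pointwise quantities $\lambda'$, $u$ and curvatures. This distinction matters because the locally constrained flows admit simpler barrier arguments for $C^0$ estimates (geodesic spheres are stationary), which is why they are the tool of choice in the present paper.
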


The quermassintegral inequalities in Euclidean space have been proved by Guan and Li \cite{GL09} for $k$-convex\footnote{A smooth domain is called {\em $k$-convex} (resp. {\em strictly $k$-convex}) if the principal curvatures of its boundary satisfy $\k\in \overline{\G}_k^{+}$ (resp. $\k\in \G_{k}^{+}$), where $\G_k^{+}$ is the Garding cone defined in \S \ref{s2:sec-2.1}.} star-shaped domains. Thus, it is natural to generalize Theorem A to a larger class of smooth bounded domains in hyperbolic space. In this direction, the inequality \eqref{WX-ineq} with $k=3$, $l=1$ was proved earlier by the second named author with Wei and Xiong in \cite{LWX14} for star-shaped domains with strictly $2$-convex boundary. Ge, Wang and Wu \cite{GeWW14} proved the inequalities \eqref{WX-ineq} with $k=2m+1$ $(0<2m<n-1)$ and $l=1$ for h-convex domains. Later, the authors of this paper \cite{HL19} generalized Ge-Wang-Wu's inequalities to smooth bounded domains with {\em nonnegative sectional curvature}\footnote{A hypersurface in hyperbolic space has {\em nonnegative} (resp. {\em positive}) {\em sectional curvature} if its principal curvatures satisfy $\k_i\k_j\geq 1$ (resp. $\k_i\k_j>1$) for all distinct $i,j$.} boundary. Recently, the authors of this paper with Andrews \cite{AHL19} proved the inequalities \eqref{WX-ineq} with $k=n-1$ and $l=n-1-2m$ $(0<2m<n-1)$ for strictly convex domains. On the other hand, the proof of convergence of the globally constrained quermassintegral preserving flow \eqref{s1:QP-MCF} depends heavily on {\em h-convexity} of hypersurfaces, see e.g. \cite{AW18,Cabez07,WX14}. The convergence of the flow \eqref{s1:QP-MCF} with $k=1,\cdots, n$ and $l=0$ was recently proved by Andrews, Chen and Wei \cite{ACW2018} for smooth bounded domains with {\em positive sectional curvature} boundary, and henceforth the inequalities \eqref{WX-ineq} with $k=1,\cdots,n$ and $l=0$ hold for such domains. 

Alternative choice to prove the quermassintegral inequalities is the following locally constrained quermassintegral preserving flows. The hyperbolic space can be viewed as a warped product space $\mathbb H^{n+1}=[0,\infty)\times \mathbb S^{n}$ equipped with the metric $\-g=dr^2+\l^2(r)\s$, where $\l(r)=\sinh r$ is the warping factor and $\s$ is the round metric of the unit sphere $\mathbb{S}^n$. Brendle, Guan and Li \cite{BGL} introduced the following locally constrained inverse curvature flow in $\mathbb H^{n+1}$ (see also a recent survey \cite{GL19} by Guan and Li):
\begin{align}\label{s1:BGL-flow}
\frac{\partial}{\partial t}X=\(\l'(r)\frac{E_{k-1}}{E_k}-u\)\nu,\quad 1\leq k\leq n,
\end{align}
where $\l'(r)=\cosh r$ and $u=\langle \l\partial_r,\nu\rangle$ is the support function of the flow hypersurface $M_t=X(t,\cdot)$. In view of the Minkowski formula \eqref{s2:Minkowski-formula}, this flow preserves $W_k(\Omega_t)$ and increases $W_{k-1}(\Omega_t)$ simultaneously, provided that the flow hypersurface is strictly $k$-convex and star-shaped. It is challenging to establish the convergence of this flow \eqref{s1:BGL-flow} under the assumption that the initial hypersurface is strictly $k$-convex and star-shaped, which would prove the quermassintegral inequalities for such domains. In \cite{BGL}, Brendle, Guan and Li proved the convergence of the flow \eqref{s1:BGL-flow} with $k=n$ for strictly convex hypersurfaces, and hence the inequalities \eqref{WX-ineq} with $k=n$ and $l=0,1,\cdots,n-1$ hold for strictly convex domains. Recently, the authors of this paper with Wei \cite{Hu-Li-Wei2020} proved the convergence of this flow \eqref{s1:BGL-flow} with $k=1,\cdots,n$ for h-convex hypersurfaces in hyperbolic space, which yields a new proof of Theorem A.

Besides the quermassintegral inequalities, there is of great interest to establish the weighted geometric inequalities in hyperbolic space. In \cite{Scheuer-Xia2019}, Scheuer and Xia  introduced the following locally constrained inverse curvature flow in $\mathbb H^{n+1}$:
\begin{align}\label{s1:SX-ICF}
\frac{\partial}{\partial t}X=&\(\frac{1}{F}-\frac{u}{\l'(r)}\)\nu.
\end{align}
In particular, if $F=E_k/E_{k-1}$, $k=1,\cdots,n$, they proved that the following convergence result.
\begin{thmB}\cite{Scheuer-Xia2019}
	Let $X_0$ be a smooth embedding of a closed $n$-dimensional manifold $M$ in $\mathbb H^{n+1}$ such that $M_0=X_0(M)$ is star-shaped and strictly $k$-convex along $X_0(M)$. Then any solution $M_t=X(M,t)$ of \eqref{s1:SX-ICF} with $F=E_k/E_{k-1}$ exists for $t>0$ and it converges to a geodesic sphere centered at the origin in the $C^\infty$-topology as $t\ra \infty$. Moreover, the flow hypersurface $M_t=X(t,M)$ is star-shaped and strictly $k$-convex for $t>0$. 
\end{thmB}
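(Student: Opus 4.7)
The plan is to parametrise $M_t$ as a radial graph $\{(r(\theta,t),\theta): \theta\in\mathbb{S}^n\}$ over the origin (possible at $t=0$ by star-shapedness and, we shall show, preserved for all $t$), and rewrite \eqref{s1:SX-ICF} as a fully nonlinear parabolic equation for the scalar $r$. Since $M_0$ is strictly $k$-convex, the principal curvatures lie in $\Gamma_k^{+}$ initially; the operator $F=E_k/E_{k-1}$ is elliptic on $\Gamma_k^{+}$, and the drift $-u/\lambda'$ is a lower-order term. Short-time existence and uniqueness therefore follow from standard parabolic theory.

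\textbf{A priori $C^0$ estimate.} At any spatial maximum of $r(\cdot,t)$ one has $\nabla r=0$ and $\nabla^2 r\leq 0$, and the graph formula for the second fundamental form then gives $\kappa_i\geq\coth r_{\max}$ for every $i$, so the principal curvatures lie in the positive cone at this point. Newton-MacLaurin then yields $F=E_k/E_{k-1}\geq\min_i\kappa_i\geq\coth r_{\max}$, while $u/\lambda'=\tanh r_{\max}$ at the same point, so $\partial_t r_{\max}=\mathcal{F}=1/F-u/\lambda'\leq 0$. The symmetric argument at a spatial minimum yields $\partial_t r_{\min}\geq 0$, hence $r_{\min}(0)\leq r(\theta,t)\leq r_{\max}(0)$ for all $t$.

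\textbf{Gradient and curvature estimates.} Preservation of star-shapedness would follow by computing the parabolic evolution of $u/\lambda'$ and applying the maximum principle together with the $C^0$ bound, producing a uniform positive lower bound $u/\lambda'\geq c_0$. The main obstacle, as is familiar from the analysis of \eqref{s1:BGL-flow}, is the curvature estimate: the simultaneous preservation of strict $k$-convexity and uniform two-sided bounds on $F$. I would compute the evolution equation of $F$---a quasilinear heat equation on $M_t$ with lower-order terms involving $u,\lambda,\lambda'$ contributed by the ambient curvature $-1$ and by the drift---and construct an auxiliary function of the form $\log F-A\log(\lambda'-u)+B\lambda'$, with constants $A,B$ determined by the $C^0$ and $C^1$ bounds, whose maximum principle analysis yields $F\leq C$. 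A parallel argument produces $F\geq c>0$, which simultaneously keeps $E_k$ away from zero. Once $F$ and every $\kappa_i$ are uniformly controlled, the equation is uniformly parabolic and Krylov-Safonov together with Schauder theory deliver $C^{2,\alpha}$ bounds and all higher derivatives, hence long-time existence.

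\textbf{Convergence.} Geodesic spheres centred at the origin are stationary solutions of \eqref{s1:SX-ICF}, because on them $u/\lambda'=\tanh r=1/F$ and so $\mathcal{F}\equiv 0$. To extract a limit I would use the Minkowski formula to identify a preserved weighted quermassintegral (the natural candidate being $\int_{M_t}\lambda' E_{k-1}\,d\mu_t$) together with a complementary weighted functional whose time derivative has a definite sign and controls $\|\mathcal{F}\|_{L^2(M_t)}$. The uniform $C^\infty$ bounds render the family subconvergent, and the monotonicity forces $\mathcal{F}\to 0$; the Newton-MacLaurin equality case then forces the limit hypersurface to satisfy $\kappa_i\equiv\coth r$ pointwise, identifying it as a geodesic sphere centred at the origin. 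Exponential convergence in $C^\infty$ follows by linearising \eqref{s1:SX-ICF} about this limit.
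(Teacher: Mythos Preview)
The paper does not prove Theorem~B. It is stated with an explicit citation to \cite{Scheuer-Xia2019} and used as a black box in \S\ref{sec:8} to supply long-time existence and convergence of the flow \eqref{s1:SX-ICF} with $F=E_k/E_{k-1}$; the paper's own contributions concerning this flow are the preservation of static convexity (Theorem~\ref{s1:main-thm-III}, proved via Theorem~\ref{thm-static-convexity}) and the monotonicity computations in the proofs of Theorems~\ref{thm-weighted-quermassintegral-ineq-I} and~\ref{thm-geometric-ineq-I}. There is therefore no proof in this paper to compare your proposal against.

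For what it is worth, your outline follows the standard architecture one finds in \cite{Scheuer-Xia2019}: reduction to a scalar parabolic equation for the radial function, $C^0$ estimate by the maximum principle (your barrier computation is correct and matches Proposition~\ref{s3:prop-C0-estimate} here), a lower bound on $u$ to preserve star-shapedness, two-sided bounds on $F$ via auxiliary test functions, and then Krylov--Safonov/Schauder. The sketch is plausible but several steps are only gestured at---in particular the preservation of strict $k$-convexity (not merely $F>0$) and the identification of the monotone quantity that drives convergence require genuine work that you have not carried out. If you want the details, consult \cite{Scheuer-Xia2019} directly.
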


These assumptions are quite similar to the purely inverse curvature flow, while the locally constrained term in \eqref{s1:SX-ICF} will be helpful in establishing geometric inequalities. In particular, employing the flow \eqref{s1:SX-ICF} with $F=E_1$, they proved the following Minkowski type inequality. 
\begin{thmC}\cite{Scheuer-Xia2019}
	Let $\Omega$ be a star-shaped and strictly mean convex domain with smooth boundary $M$ in $\mathbb H^{n+1}$. Then there holds
	\begin{align}\label{SX-ineq-I}
	\int_{M} \l' E_1 d\mu \geq (n+1)\int_{\Omega} \l' d\mrm{vol}+\omega_n^{\frac{2}{n+1}}\((n+1)\int_{\Omega}\l' d\mrm{vol}\)^{\frac{n-1}{n+1}},
	\end{align}
	where $\omega_n$ is the area of the unit sphere $\mathbb S^{n}\subset \mathbb R^{n+1}$. Equality holds in \eqref{SX-ineq-I} if and only if $\Omega$ is a geodesic ball centered at the origin.
\end{thmC}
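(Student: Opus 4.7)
The plan is to apply the flow of Theorem B with $F = E_1$ starting from $M_0 = \partial\Omega$, and derive \eqref{SX-ineq-I} from the monotonicity of two quantities along this flow combined with the sphere limit. By Theorem B the flow exists for all $t \geq 0$ and $M_t \to \partial B_{r_\infty}$ smoothly for some $r_\infty > 0$. Set $V(t) := (n+1)\int_{\Omega_t}\lambda' \, d\mrm{vol}$; since $\lambda\partial_r$ is conformal Killing in $\mathbb H^{n+1}$ with $\mrm{div}(\lambda\partial_r) = (n+1)\lambda'$, the divergence theorem yields the equivalent form $V(t) = \int_{M_t} u \, d\mu_t$. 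A direct check on $\partial B_r$ gives $V = \omega_n \sinh^{n+1}r$ and $\int_M \lambda' E_1 d\mu - V = \omega_n\sinh^{n-1}r = \omega_n^{2/(n+1)}V^{(n-1)/(n+1)}$, so \eqref{SX-ineq-I} becomes an equality on centered geodesic spheres.

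For the first monotonicity, along the flow speed $f = 1/E_1 - u/\lambda'$ one computes
\begin{align*}
\dot V(t) = (n+1)\int_{M_t}\Big(\frac{\lambda'}{E_1} - u\Big)d\mu_t \geq 0,
\end{align*}
where the inequality is Brendle's Heintze-Karcher inequality $\int_M \lambda'/E_1 \, d\mu \geq (n+1)\int_\Omega \lambda' \, d\mrm{vol}$ for star-shaped mean-convex domains in $\mathbb H^{n+1}$, with equality iff $M_t = \partial B_r$ for some $r$. Thus $V$ is non-decreasing, strictly so unless we already start at a centered sphere.

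The main step is a second monotonicity. Define $\mc Q(t) := \int_{M_t}\lambda' E_1 \, d\mu_t - V(t)$ and aim to prove that $\mc Q \cdot V^{-(n-1)/(n+1)}$ is non-increasing, equivalently $(n+1)V\dot{\mc Q} \leq (n-1)\mc Q\,\dot V$. Using $\partial_t \lambda' = uf$, $\partial_t d\mu = fH d\mu$, and $\partial_t E_1 = -(\Delta f + (|A|^2 - n)f)/n$ in $\mathbb H^{n+1}$, together with integration by parts via $\Delta_M \lambda' = n\lambda' - n E_1 u$ and the identity $|A|^2 = n^2 E_1^2 - n(n-1)E_2$, a direct computation produces the clean formula
\begin{align*}
\frac{d}{dt}\int_{M_t}\lambda' E_1 \, d\mu_t = 2\int_{M_t} uE_1 f\, d\mu_t + (n-1)\int_{M_t} \lambda' E_2 f\, d\mu_t.
\end{align*}
Substituting $f = 1/E_1 - u/\lambda'$ and simplifying via the Minkowski identities $\int_M \lambda' E_{k-1} d\mu = \int_M u E_k d\mu$, the target inequality reduces to a combination of Brendle's inequality (controlling $\int_M \lambda'/E_1 d\mu$ from below by $V$), the pointwise Newton-MacLaurin inequality $E_1^2 \geq E_2$ (giving $\int_M \lambda' E_2/E_1 d\mu \leq \int_M \lambda' E_1 d\mu$), and control of the cross-term $\int_M u^2 E_1/\lambda' d\mu$ via Cauchy-Schwarz paired with a second weighted Minkowski-type identity. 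Arranging these estimates so that the non-positive contributions dominate the mixed terms is the crux and the main obstacle; strict mean convexity is preserved along the flow by Theorem B, which keeps all pointwise quantities well-defined.

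Granted the second monotonicity, the conclusion is immediate: since $\mc Q\cdot V^{-(n-1)/(n+1)} = \omega_n^{2/(n+1)}$ on the limit sphere $\partial B_{r_\infty}$ and $M_t$ converges there smoothly, monotonicity yields
\begin{align*}
\mc Q(0)\cdot V(0)^{-(n-1)/(n+1)} \geq \omega_n^{2/(n+1)},
\end{align*}
which rearranges to \eqref{SX-ineq-I}. For the equality case, tracing equality back through Brendle's inequality at every time forces $M_0$ to be a geodesic sphere centered at the origin.
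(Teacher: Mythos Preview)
Theorem~C is quoted from \cite{Scheuer-Xia2019} and is not proved in this paper, so there is no proof here to compare your attempt against directly. That said, your overall strategy---run the flow \eqref{s1:SX-ICF} with $F=E_1$, use Brendle's Heintze--Karcher inequality for $\dot V\geq 0$, and read off the inequality from the sphere limit---is the right one. The problem is your ``second monotonicity.''

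You aim to show that the ratio $\mc Q\cdot V^{-(n-1)/(n+1)}$ is non-increasing and openly concede that ``arranging these estimates so that the non-positive contributions dominate the mixed terms is the crux and the main obstacle.'' That is not a proof; it is a description of where the proof should go. Moreover, the ratio is an unnecessary detour. What one actually shows is that $\mc Q(t)=\int_{M_t}\l'E_1\,d\mu_t-V(t)$ is itself non-increasing. Using your variational formula together with $E_2\leq E_1^2$ and the Minkowski identity $\int uE_2=\int\l'E_1$, one obtains
\[
\dot{\mc Q}\ \leq\ (n+3)\!\int_{M_t}\! u\,d\mu_t\ -\ 2\!\int_{M_t}\!\frac{u^2E_1}{\l'}\,d\mu_t\ -\ (n+1)\!\int_{M_t}\!\frac{\l'}{E_1}\,d\mu_t.
\]
The cross-term is handled not by Cauchy--Schwarz but by the pointwise AM--GM inequality $\frac{u^2E_1}{\l'}+\frac{\l'}{E_1}\geq 2u$, which gives
\[
\dot{\mc Q}\ \leq\ (n-1)\!\int_{M_t}\! u\,d\mu_t\ -\ (n-1)\!\int_{M_t}\!\frac{\l'}{E_1}\,d\mu_t\ \leq\ 0
\]
by Heintze--Karcher again. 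With $\dot V\geq 0$ and $\dot{\mc Q}\leq 0$ separately, the conclusion is immediate:
\[
\mc Q(0)\ \geq\ \mc Q(\infty)\ =\ \omega_n^{2/(n+1)}V(\infty)^{(n-1)/(n+1)}\ \geq\ \omega_n^{2/(n+1)}V(0)^{(n-1)/(n+1)},
\]
and equality forces equality in Heintze--Karcher, hence a centered geodesic sphere. Note also that once $\dot{\mc Q}\leq 0$ and $\dot V\geq 0$ are known, your ratio monotonicity follows trivially, so aiming for the ratio first was working harder than necessary.
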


For a bounded domain $\Omega$ in $\mathbb H^{n+1}$ with smooth boundary $M=\partial\Omega$, it is called {\em static convex} (resp. {\em strictly static convex}) if its second fundamental form satisfies 
\begin{align*}
h_{ij} \geq \frac{u}{\l'}g_{ij}>0, \quad (\text{resp. $h_{ij}>\frac{u}{\l'}g_{ij}>0$}) \quad \text{everywhere on $M$.}
\end{align*}
The static convexity implies the strict convexity, but it is weaker than h-convexity since $\frac{u}{\l'}<1$. Combining \eqref{SX-ineq-I} with the following Minkowski type inequality (see Xia \cite[Theorem 1.1]{Xia2016})
\begin{align*}
\(\int_{M} \l' d\mu \)^2 \geq (n+1) \int_M \l' E_1 d\mu \int_{\Omega} \l' d\mrm{vol}
\end{align*}
for hypersurfaces satisfying $h_{ij}\geq \frac{u}{\l'}g_{ij}$ everywhere, Scheuer and Xia \cite[Theorem 1.6]{Scheuer-Xia2019} proved the following weighted isoperimetric inequality.
\begin{thmD}\cite{Scheuer-Xia2019}
	Let $\Omega$ be a static convex domain with smooth boundary $M$ in $\mathbb H^{n+1}$. Then there holds
	\begin{align}\label{SX-ineq-III}
	\int_{M} \l' d\mu \geq \(\((n+1)\int_{\Omega}\l' d\mrm{vol}\)^{2}+\omega_n^{\frac{2}{n+1}}\((n+1)\int_{\Omega}\l'd\mrm{vol}\)^\frac{2n}{n+1}\)^\frac{1}{2}.
	\end{align}
	Equality holds in \eqref{SX-ineq-III} if and only if $\Omega$ is a geodesic ball centered at the origin.
\end{thmD}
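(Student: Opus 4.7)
The plan is to derive the estimate by directly combining the two inequalities that are highlighted immediately above the statement: Theorem C and Xia's Minkowski-type inequality from \cite[Theorem 1.1]{Xia2016}. The first step is to check that static convexity is strong enough to invoke both results. The assumption $h_{ij}\geq \frac{u}{\l'}g_{ij}>0$ forces $u>0$ everywhere on $M$, so $M$ is star-shaped with respect to the origin, and the strict positivity of $h_{ij}$ makes $M$ strictly convex, hence in particular strictly mean convex, so Theorem C applies. The pointwise inequality $h_{ij}\geq \frac{u}{\l'}g_{ij}$ is itself the hypothesis required for the Xia inequality, so that applies as well.

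Abbreviate $A:=\int_{M}\l'\,d\mu$, $B:=\int_{M}\l' E_1\,d\mu$ and $V:=\int_{\Omega}\l'\,d\mrm{vol}$. Then Theorem C reads
\begin{equation*}
B \;\geq\; (n+1)V + \omega_n^{\frac{2}{n+1}}\bigl((n+1)V\bigr)^{\frac{n-1}{n+1}},
\end{equation*}
and Xia's Minkowski-type inequality reads $A^{2}\geq (n+1)BV$. Multiplying the first estimate by $(n+1)V>0$ and inserting it into the second yields
\begin{equation*}
A^{2} \;\geq\; (n+1)V\cdot B \;\geq\; \bigl((n+1)V\bigr)^{2} + \omega_n^{\frac{2}{n+1}}\bigl((n+1)V\bigr)^{\frac{2n}{n+1}},
\end{equation*}
which is exactly the squared form of \eqref{SX-ineq-III}; taking the positive square root completes the estimate.

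For equality: Theorem C has equality if and only if $\Omega$ is a geodesic ball centered at the origin, and the same is true of Xia's inequality (as recorded in \cite{Xia2016}). Hence if equality holds in \eqref{SX-ineq-III}, both of the two inequalities above must be equalities simultaneously, which forces $\Omega$ to be a geodesic ball centered at the origin; the converse is an elementary direct verification on such a ball. I do not foresee any serious analytic obstacle: the whole argument is essentially a two-line algebraic combination, and the only point that needs genuine care is the bookkeeping of the equality case of Xia's inequality and the verification that $u>0$ is built into the static convexity hypothesis so that the origin-centered nature of the star-shapedness is automatic.
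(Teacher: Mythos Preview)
Your proposal is correct and follows exactly the approach the paper describes: the paragraph immediately preceding Theorem~D explicitly says that Scheuer and Xia obtained \eqref{SX-ineq-III} by combining the Minkowski-type inequality \eqref{SX-ineq-I} (Theorem~C) with Xia's inequality $\bigl(\int_{M}\l'\,d\mu\bigr)^{2}\geq (n+1)\int_{M}\l' E_{1}\,d\mu\int_{\Omega}\l'\,d\mrm{vol}$, which is precisely the two-line algebraic combination you carry out.
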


Motivated by Theorems C and D, for any bounded domain $\Omega$ with smooth boundary $M=\partial \Omega$, we introduce the {\em weighted curvature integral} as follows: 
\begin{align*}
W_{0}^{\l'}(\Omega)=&~\int_{M} u d\mu=\int_{\Omega}(n+1)\l'd\mrm{vol}, \quad W_{n+1}^{\l'}(\Omega)=\int_{M}\l'E_n d\mu,\\
W_{k}^{\l'}(\Omega)=&~\int_{M}\l' E_{k-1}d\mu=~\int_{M} u E_k d\mu, \quad k=1,\cdots,n.
\end{align*}
Along the outward normal variation with speed $\mathcal{F}$, the weighted curvature integral evolves by (see Proposition \ref{s2:evol-weighted-curvature-integral})
\begin{align*}
\frac{d}{dt} W_{k}^{\l'}(\Omega_t)=&~\int_{M_t} \(k u E_{k-1}+(n+1-k) \l' E_k\)\mathcal{F} d\mu_t, \quad k=0,\cdots,n+1,
\end{align*}
where we take $E_{-1}=E_{n+1}=0$ by convention.

We would like to propose the following conjectures for the weighted curvature integrals in hyperbolic space.
\begin{conjecture}\label{conjecture}
	Let $\Omega$ be a static convex domain with smooth boundary in $\mathbb H^{n+1}$. Then there holds
	\begin{align}\label{weighted-quermassintegral-ineq}
	W_{k}^{\l'}(\Omega)\geq h_k \circ h_{l}^{-1}(W_{l}^{\l'}(\Omega)), \quad 0\leq  l<k \leq n+1.
	\end{align}
	Equality holds in \eqref{weighted-quermassintegral-ineq} if and only if $\Omega$ is a geodesic ball centered at the origin. Here $h_k:[0,\infty)\ra \mathbb R^{+}$ is a monotone function defined by $h_k(r)=W_k^{\l'}(B_r)=\omega_n \sinh^{n+1-k} r \cosh^{k} r$, the $k$th weighted curvature integral for a geodesic ball of radius $r$, and $h_l^{-1}$ is the inverse function of $h_l$.
\end{conjecture}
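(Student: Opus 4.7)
The strategy is to extend the locally constrained curvature flow paradigm behind Theorems A--D to the weighted setting. For each pair $(l,k)$ with $0\leq l<k\leq n+1$, one would design a flow of Brendle--Guan--Li type \eqref{s1:BGL-flow} or Scheuer--Xia type \eqref{s1:SX-ICF} along which $W_l^{\l'}(\Omega_t)$ is preserved while $W_k^{\l'}(\Omega_t)$ is monotone, establish smooth convergence to a geodesic sphere centered at the origin, and compare initial with limiting values to obtain \eqref{weighted-quermassintegral-ineq}. The key technical input is the preservation of static convexity along these flows, which is one of the main theorems asserted in the abstract and is established in the earlier sections of the paper.

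Concretely, for the base case $l=k-1$ I would seek a speed $\mathcal{F}$ for which the Minkowski identity $\int_M\l' E_{j-1}d\mu=\int_M uE_j d\mu$ makes
\[
\frac{d}{dt}W_{k-1}^{\l'}(\Omega_t)=\int_{M_t}\bigl((k-1)uE_{k-2}+(n+2-k)\l' E_{k-1}\bigr)\mathcal{F}\,d\mu_t=0,
\]
typical candidates being $\mathcal{F}=\l' E_{k-2}/E_{k-1}-u$ and $\mathcal{F}=E_{k-2}/E_{k-1}-u/\l'$. With this flow fixed, the main steps are: (i) invoke preservation of static convexity to obtain uniform two-sided bounds on the principal curvatures; (ii) deduce long-time existence and smooth exponential convergence to a geodesic sphere $B_{r_\infty}$ centered at the origin via Krylov--Safonov and Schauder estimates, as in \cite{BGL, Hu-Li-Wei2020, Scheuer-Xia2019}; (iii) prove pointwise monotonicity $\partial_t W_k^{\l'}(\Omega_t)\leq 0$ using a Newton--Maclaurin-type inequality that exploits $h_{ij}\geq(u/\l')g_{ij}$; and (iv) since $r_\infty=h_l^{-1}(W_l^{\l'}(\Omega))$ by the preserved quantity, monotonicity yields $W_k^{\l'}(\Omega)\geq h_k(r_\infty)=h_k\circ h_l^{-1}(W_l^{\l'}(\Omega))$, with equality forcing umbilicity via the rigidity case of Newton--Maclaurin.

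For general $l<k$, two approaches are natural: iterate the base case $l\to l+1\to\cdots\to k$, chaining the monotone quantities and relying on the fact that the hypersurface remains static convex at each stage; or, in the spirit of Scheuer--Xia's combined proof of Theorems C and D, pair the flow with an auxiliary Alexandrov--Fenchel-type inequality such as Xia's $(\int_M \l' d\mu)^2\geq (n+1)\int_M\l' E_1 d\mu\int_\Omega\l' d\mrm{vol}$. The main obstacle I expect is step (iii) when $k-l>1$: producing a genuinely \emph{pointwise} Newton--Maclaurin inequality in the weighted hyperbolic setting under static convexity alone, as opposed to the stronger h-convexity used in \cite{WX14}. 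If only an integral version is available, the chaining approach becomes the only viable route, and verifying that each intermediate flow converges while keeping the static convexity hypothesis intact would then be the central technical difficulty.
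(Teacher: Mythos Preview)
This statement is labeled a \emph{conjecture} in the paper, and the paper does not prove it. What the paper establishes is only the special case $l=0$ (Theorem~\ref{thm-weighted-quermassintegral-ineq-I}), using exactly the flow paradigm you describe: along \eqref{s1:locally-MCF} with $F=E_1$ the quantity $W_0^{\l'}$ is preserved, along \eqref{s1:SX-ICF} with $F=E_{k-1}/E_{k-2}$ it is nondecreasing (via Heintze--Karcher), and in both cases $W_k^{\l'}$ is nonincreasing thanks to static convexity and the divergence structure of $\dot E_k^{ij}\nabla_i\nabla_j\l'$. So your outline matches the paper's method for the one case it actually handles.

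The genuine gap in your proposal is the existence of a flow preserving $W_l^{\l'}$ for $l\geq 1$. Your candidate speeds $\mathcal{F}=\l' E_{k-2}/E_{k-1}-u$ and $\mathcal{F}=E_{k-2}/E_{k-1}-u/\l'$ do \emph{not} make $\frac{d}{dt}W_{k-1}^{\l'}$ vanish: by Proposition~\ref{s2:evol-weighted-curvature-integral} the integrand is $\bigl((k-1)uE_{k-2}+(n+2-k)\l' E_{k-1}\bigr)\mathcal{F}$, and the Minkowski identity $\int\l' E_{k-2}=\int uE_{k-1}$ does not annihilate this mixed combination. The case $l=0$ is special precisely because the $kuE_{k-1}$ term drops out, leaving a pure $\int \l'\mathcal{F}$ that Minkowski handles directly. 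For $l\geq 1$ no locally constrained flow in the paper's repertoire is known to preserve $W_l^{\l'}$, which is why the statement is posed as a conjecture. Your chaining idea would require, at each stage, either such a preserving flow or a two-sided monotonicity argument that is not available; the obstacle you flag at the end is therefore not a technicality but the heart of the open problem.
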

The stronger form of Conjecture \ref{conjecture} is as follows.
\begin{conjecture}\label{conjecture-strong-version}
	Let $0\leq l<k\leq n+1$. Let $\Omega$ be a star-shaped and $(k-1)$-convex domain with smooth boundary in $\mathbb H^{n+1}$. Then there holds
	\begin{align*}
	W_{k}^{\l'}(\Omega)\geq h_k \circ h_{l}^{-1}(W_{l}^{\l'}(\Omega)).
	\end{align*}
	Equality holds if and only if $\Omega$ is a geodesic ball centered at the origin.
\end{conjecture}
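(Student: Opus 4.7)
\emph{Proof plan.} The plan is to interpolate $\Omega$ to a centered geodesic ball via a locally constrained inverse curvature flow along which the quantity
\begin{align*}
Q(t) := W_k^{\l'}(\Omega_t) - h_k\circ h_{k-1}^{-1}\(W_{k-1}^{\l'}(\Omega_t)\)
\end{align*}
is monotone in the right direction, and then to pass to the limit using that $Q$ vanishes on centered spheres. First I would reduce to the adjacent case $l=k-1$: since $(k-1)$-convex implies $j$-convex for every $0\leq j\leq k-1$, once the adjacent inequality $W_{j+1}^{\l'}\geq h_{j+1}\circ h_{j}^{-1}(W_{j}^{\l'})$ is established under the $j$-convex hypothesis for each $j=l,\ldots,k-1$, chaining together with the monotonicity of each $h_j$ yields the general $(l,k)$ case.

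For the adjacent pair $(k-1,k)$ the natural choice is the Scheuer--Xia flow of index $k-1$,
\begin{align*}
\frac{\partial}{\partial t}X = \(\frac{E_{k-2}}{E_{k-1}} - \frac{u}{\l'}\)\nu,
\end{align*}
which by Theorem B (applied with $k$ replaced by $k-1$) is well-posed for all time on any star-shaped, strictly $(k-1)$-convex hypersurface, preserves this class, and converges smoothly to a centered geodesic sphere $B_{r_\infty}$. Starting from the variation identity
\begin{align*}
\frac{d}{dt}W_{j}^{\l'}(\Omega_t) = \int_{M_t} \(j u E_{j-1} + (n+1-j)\l' E_j\)\mathcal{F}\,d\mu_t,
\end{align*}
and combining it with the hyperbolic Minkowski identity $\int_{M_t} u E_j\,d\mu_t = \int_{M_t}\l' E_{j-1}\,d\mu_t$ together with Newton--MacLaurin inequalities on $\overline{\G}_{k-1}^{+}$ (notably $E_{k-2}^2\geq E_{k-3}E_{k-1}$), I would verify that $Q(t)$ is monotone nonincreasing along the flow. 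Since $Q(\infty)=0$ at the limiting sphere, monotonicity forces $Q(0)\geq 0$, which is the desired inequality; rigidity in the Newton--MacLaurin step together with a density argument handles equality and the passage from strictly $(k-1)$-convex to $(k-1)$-convex.

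The main obstacle will be establishing the sign of $\frac{d}{dt}Q(t)$. A direct expansion along the Scheuer--Xia flow produces integrands involving mixed products of $u$, $\l'$ and $E_{k-3},E_{k-2},E_{k-1}$ that do not manifestly collapse by pointwise algebra, and one must combine them through integration by parts using the warped-product structure of $\H^{n+1}$ together with the sharp form of Newton--MacLaurin. Should this direct route fail, a fallback is to modify the speed by a global Lagrange multiplier enforcing exact preservation of $W_{k-1}^{\l'}$; but then one must re-establish long-time existence and convergence of the modified flow without invoking the h-convex machinery of \cite{Cabez07,WX14}, which is where I expect the essential difficulty of Conjecture \ref{conjecture-strong-version} to lie.
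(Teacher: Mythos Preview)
This statement is labeled \emph{Conjecture} in the paper and is not proved there; the paper establishes only the partial case $l=0$ under the stronger hypothesis of static convexity (Theorem~\ref{thm-weighted-quermassintegral-ineq-I}). So there is no ``paper's own proof'' to compare against, and your proposal should be read as an attack on an open problem.

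Your outline follows exactly the route the paper uses for its partial results, and the point where you say ``the main obstacle will be establishing the sign of $\frac{d}{dt}Q(t)$'' is precisely where the conjecture is open. Concretely, along the Scheuer--Xia flow with $F=E_{k-1}/E_{k-2}$ the paper computes (see the proof of Theorem~\ref{thm-weighted-quermassintegral-ineq-I}, Case~2)
\[
\frac{d}{dt}W_k^{\l'}(\Omega_t)\;\leq\;-\frac{k}{k-1}\int_{M_t}\sum_i \dot{E}_{k-1}^{ii}\,\frac{\l'\k_i-u}{\l'^2}\,|\nabla_i\l'|^2\,d\mu_t,
\]
after using Newton--MacLaurin, the Minkowski identity, and integration by parts---the very ingredients you list. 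The sign of the right-hand side is controlled by $\l'\k_i-u\ge 0$, i.e.\ by static convexity, not by $(k-1)$-convexity. Under only $(k-1)$-convexity the factors $\l'\k_i-u$ can change sign, and no further Newton--MacLaurin manipulation fixes this, since that inequality is already saturated in the preceding step. The companion monotonicity for $W_{k-1}^{\l'}$ has the same feature. Thus the ``direct expansion'' you sketch does not close under the stated hypothesis, and this is not a matter of bookkeeping but the substantive gap that makes the statement a conjecture.

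Your fallback---adding a Lagrange multiplier to preserve $W_{k-1}^{\l'}$---runs into the same wall: one would still need either a pointwise pinching (which is what static convexity provides) or new integral identities beyond Minkowski and Newton--MacLaurin. The reduction to adjacent pairs by chaining is fine formally, but it does not help, since each adjacent step inherits the same obstruction.
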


\subsection{Main results}
In this paper, we first introduce a new locally constrained flow. Let $X_0:M^n \ra \mathbb H^{n+1}$ be a smooth embedding such that $M_0$ is a closed, star-shaped hypersurface in $\mathbb H^{n+1}$. We consider the smooth family of immersions $X:M^n \times [0,T)\ra \mathbb H^{n+1}$ satisfying the following evolution equations:
\begin{align}\label{s1:locally-MCF}
\frac{\partial}{\partial t}X(x,t)=\(1-\frac{u F}{\l'(r)} \)\nu(x,t),
\end{align}
where $F=E_1$.

Our new observation in this paper is that the static convexity is preserved along a large class of locally constrained curvature flows (Theorem \ref{thm-static-convexity}) including the flows \eqref{s1:SX-ICF} and \eqref{s1:locally-MCF}, provided that $F$ satisfies the following
\begin{assump}\label{s1:Assumption}
	\begin{enumerate}[(i)]
		\item $F(\mathcal{W})=f(\k(\mathcal{W}))$, where $\k$ is the eigenvalues of $\mathcal{W}$ and $f$ is a smooth symmetric function on the positive cone $\G_{+}=\{(x_i)\in\mathbb R^n ~:~ x_i>0\}$ satisfying 
		\begin{enumerate}[(1)]
			\item $f$ is strictly increasing, i.e., $\dot{f}^i=\partial f/\partial \k_i>0$ on $\G_{+}$, $\forall i=1,\cdots,n$;
			\item $f$ is homogeneous of degree $1$, i.e., $f(k\k)=kf(\k)$ for any $k>0$;
			\item $f$ is strictly positive on $\G_{+}$ and is normalized such that $f(1,\cdots,1)=1$;			
		\end{enumerate}
		\item $f$ is concave.
		\item $f$ is inverse concave, i.e., the function
		\begin{align}\label{s1:inverse-concave}
		f_{\ast}(x_1,\cdots,x_n)=f(x_1^{-1},\cdots,x_n^{-1})^{-1}
		\end{align}
		is concave.
	\end{enumerate}
\end{assump}  
\begin{rem}
	Important examples of the curvature function $F$ satisfying Assumption \ref{s1:Assumption} include the curvature quotients $F=(E_{k}/E_{l})^{1/(k-l)}$, $0\leq l<k\leq n$, see \cite{And07}.
\end{rem}

We first prove the convergence of the flow \eqref{s1:locally-MCF} with $F=E_1$ for star-shaped hypersurfaces. This is a weighted volume preserving flow, which is inspired by Guan-Li's mean curvature type flow \cite{GL15}.
\begin{thm}\label{s1:main-thm-I}
	Let $X_0$ be a smooth embedding of a closed $n$-dimensional manifold $M$ in $\mathbb H^{n+1}$ such that $M_0=X_0(M)$ is star-shaped. Then any solution $M_t=X(M,t)$ of \eqref{s1:locally-MCF} with $F=E_1$ remains star-shaped for $t>0$ and it converges to a geodesic sphere $\partial B_{r_\infty}$ centered at the origin in the $C^\infty$-topology as $t\ra \infty$, where the radius $r_\infty$ is uniquely determined by $W^{\l'}_0(B_{r_\infty})=W^{\l'}_0(\Omega_0)$. Moreover, if the initial hypersurface $M_0=X_0(M)$ is static convex, then the flow hypersurface $M_t=X(t,M)$ becomes strictly static convex for $t>0$. 
\end{thm}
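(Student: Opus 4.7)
The plan is to treat \eqref{s1:locally-MCF} as a quasilinear parabolic equation for the radial-graph function, following the Guan-Li strategy \cite{GL15} adapted to the weighted setting of hyperbolic space. Since $M_0$ is star-shaped, I would write $M_t$ as a graph $r=r(\theta,t)$ over $\mathbb S^n$ and analyze the resulting scalar parabolic PDE for $r$; short-time existence is standard. The preservation of $W_0^{\lambda'}(\Omega_t)$ is immediate from the evolution formula for weighted curvature integrals combined with the classical hyperbolic Minkowski identity $\int_M\lambda'\,d\mu=\int_M uE_1\,d\mu$:
\begin{align*}
\frac{d}{dt}W_0^{\lambda'}(\Omega_t)
=(n+1)\int_{M_t}\lambda'\Bigl(1-\frac{uE_1}{\lambda'}\Bigr)d\mu_t
=(n+1)\int_{M_t}(\lambda'-uE_1)\,d\mu_t=0,
\end{align*}
which pins down the only possible limit radius $r_\infty$.

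The bulk of the argument is a priori estimates. A $C^0$ bound on $r$ comes from comparison with concentric geodesic spheres (which are stationary under \eqref{s1:locally-MCF}) together with $W_0^{\lambda'}$-conservation, which prevents $r$ from escaping to $0$ or $\infty$. Star-shapedness, i.e.\ a uniform lower bound on the support function $u$, is preserved by the parabolic maximum principle applied to $u$ (or to a suitable function of $r$ and its gradient). The crucial step is the $C^2$ estimate, a uniform upper bound on the principal curvatures; following Guan-Li \cite{GL15} and Scheuer-Xia \cite{Scheuer-Xia2019}, I would apply the maximum principle to an auxiliary function built from $H$ and the established lower bound on $u$, with the locally constrained form of the speed and systematic use of the Minkowski identity as the key tools to absorb the ambient curvature terms from $\mathbb H^{n+1}$. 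Krylov-Safonov plus parabolic Schauder theory then upgrade these to uniform $C^\infty$ bounds and give long-time existence on $[0,\infty)$.

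For smooth convergence, I would exhibit $W_1^{\lambda'}$ as a Lyapunov functional. Its evolution is
\begin{align*}
\frac{d}{dt}W_1^{\lambda'}(\Omega_t)=\int_{M_t}(u+n\lambda' E_1)\Bigl(1-\frac{uE_1}{\lambda'}\Bigr)d\mu_t,
\end{align*}
and, using the Minkowski identities $\int\lambda'd\mu=\int uE_1 d\mu$ and $\int\lambda' E_1 d\mu=\int uE_2 d\mu$ together with the unconditional inequality $E_1^2\geq E_2$ (which is $\sum_{i<j}(\kappa_i-\kappa_j)^2\geq 0$) and a Cauchy-Schwarz step, one should obtain $\frac{d}{dt}W_1^{\lambda'}\leq0$ with equality only when $M_t$ is umbilic and $\mathcal F\equiv0$, i.e.\ only on geodesic spheres centered at the origin. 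Combined with the uniform $C^\infty$ estimates, subsequential limits of $M_t$ must then be such spheres, and $W_0^{\lambda'}$-conservation forces the limit to be $\partial B_{r_\infty}$; exponential $C^\infty$-convergence follows from linearization about $\partial B_{r_\infty}$. Finally, the static convexity statement is an immediate consequence of Theorem \ref{thm-static-convexity} applied to $F=E_1$, which satisfies Assumption \ref{s1:Assumption} (being smooth, linear, concave and inverse-concave); strict static convexity for $t>0$ then follows from the strong maximum principle applied to the tensor $h_{ij}-(u/\lambda')g_{ij}$.

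The main obstacle will be the $C^2$ curvature estimate. Because the speed $1-uE_1/\lambda'$ mixes curvature with the position-dependent factors $u$ and $\lambda'$ and is not of definite sign, the standard auxiliary functions for mean-curvature-type flows require careful modification, and systematic use of the Minkowski identity seems essential to absorb the resulting error terms. A secondary difficulty is to carry out the Cauchy-Schwarz step in the monotonicity of $W_1^{\lambda'}$ so as to get strict inequality off origin-centered spheres despite the speed's lack of a definite sign.
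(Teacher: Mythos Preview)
Your outline diverges from the paper in two essential ways, and one of them contains a real gap.

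The paper bypasses the $C^2$ estimate entirely. Since $F=E_1$ is linear in the Weingarten map, the graphical equation for $\varphi$ can be rewritten in \emph{divergence form},
\[
\partial_t\varphi=\divv\!\Bigl(\frac{D\varphi}{n\l\l' v}\Bigr)+\bigl((n+1)\l'^2+\l^2\bigr)\frac{|D\varphi|^2}{n\l\l'^2 v}.
\]
After the $C^0$ bound (your sphere comparison is exactly Proposition~\ref{s3:prop-C0-estimate}), the paper computes the evolution of $|D\varphi|^2$ directly and obtains, by elementary manipulations (trace inequality and completing the square), the exponential decay $|D\varphi|^2\le Ce^{-\a t}$. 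With a uniform $C^1$ bound in hand, classical divergence-form parabolic theory \cite{Lady68} gives all higher regularity immediately---no curvature estimate, no Krylov--Safonov. Moreover, the exponential gradient decay already forces smooth convergence to a sphere, so no Lyapunov functional is used either. The ``main obstacle'' you identify simply does not arise.

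Your proposed Lyapunov argument for $W_1^{\l'}$ has a genuine gap in the star-shaped setting. After using $\int\l'E_1=\int uE_2$ and $E_1^2\ge E_2$, the remaining term is
\[
\int_{M_t}\frac{u}{\l'}(\l'-uE_1)\,d\mu_t
=\frac{1}{n}\int_{M_t}\frac{u}{\l'}\Delta\l'\,d\mu_t
=-\frac{1}{n}\int_{M_t}\sum_i\frac{\l'\k_i-u}{\l'^2}\,|\nabla_i\l'|^2\,d\mu_t,
\]
whose sign is controlled precisely by static convexity ($\l'\k_i-u\ge 0$), not by star-shapedness; I do not see a Cauchy--Schwarz step that repairs this. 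The paper uses exactly this computation, but only in \S\ref{sec:8} where static convexity is assumed, never for the convergence of Theorem~\ref{s1:main-thm-I}. Your treatment of $W_0^{\l'}$-conservation and the static-convexity clause via Theorem~\ref{thm-static-convexity} is correct and matches the paper.
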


We also prove that the static convexity is preserved along the flow \eqref{s1:SX-ICF} for $t>0$.
\begin{thm}\label{s1:main-thm-III}
	Assume that $F$ satisfies Assumption \ref{s1:Assumption}. Let $X_0$ be a smooth embedding of a closed $n$-dimensional manifold $M$ in $\mathbb H^{n+1}$ such that $X_0(M)$ is static convex. Then the flow hypersurface $M_t=X(t,M)$ of \eqref{s1:SX-ICF} becomes strictly static convex for $t>0$. 
\end{thm}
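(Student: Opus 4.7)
The plan is to apply the tensor maximum principle to the symmetric tensor
$$M_i^j := h_i^j - \frac{u}{\lambda'}\delta_i^j,$$
whose nonnegativity (resp.\ strict positivity) is exactly static convexity (resp.\ strict static convexity); the goal is to show that $M$ remains $\geq 0$ along the flow and in fact becomes $>0$ for every $t>0$. The first step is to derive the evolution of $M_i^j$ under the flow $\partial_t X = \Phi\nu$ with $\Phi = 1/F - u/\lambda'$. The evolution of $h_i^j$ follows from the standard Simons-type identities in the constant-curvature-$(-1)$ setting, while the evolution of $u/\lambda'$ is controlled through the tangential formulas that follow from $\bar\nabla^2\lambda' = \lambda'\bar g$ in $\mathbb{H}^{n+1}$, in particular $\nabla_i\nabla_j\lambda' = \lambda' g_{ij} - u h_{ij}$ together with the identity for $\nabla u$ in terms of $h$ and $\nabla\lambda'$. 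These combine to give an equation of the form
$$\partial_t M_i^j = \frac{1}{F^2}\dot F^{kl}\nabla_k\nabla_l M_i^j + (\text{first-order terms in }\nabla M) + \mathcal{R}_i^j,$$
where $\mathcal{R}$ collects the zeroth-order reaction together with the $\ddot F$-contributions coming from the second tangential derivatives of $1/F$.

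Next I would invoke Hamilton's tensor maximum principle. Suppose, for contradiction, that $t_0>0$ is the first time an eigenvalue of $M$ touches zero, at a point $x_0$ with null eigenvector $\xi$. Then at $(x_0,t_0)$ one has $h\xi = (u/\lambda')\xi$, the spatial first-order identity $\nabla M_{\xi\xi} = 0$, and $\dot F^{kl}\nabla_k\nabla_l M_{\xi\xi}\geq 0$. A contradiction would follow as soon as $\mathcal{R}_{\xi\xi}(x_0,t_0)>0$, and establishing this strict pointwise positivity is the crux of the argument and the main obstacle.

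To verify strict positivity of $\mathcal{R}_{\xi\xi}$, I would diagonalise $h$ with smallest eigenvalue $\kappa_1 = u/\lambda'$ aligned with $\xi$ and combine the three algebraic inputs from Assumption \ref{s1:Assumption}: (a) $1$-homogeneity, supplying $\dot f^i\kappa_i = f$; (b) concavity of $f$, yielding $\ddot f^{pq,rs}\eta_{pq}\eta_{rs} \leq 0$ for any symmetric $\eta$, together with the sign of the off-diagonal quotient $(\dot f^p - \dot f^q)/(\kappa_p - \kappa_q)\leq 0$; and (c) inverse concavity of $f$, in the Andrews-type algebraic form that additionally controls the mixed term $\ddot f^{pp,qq}\eta_{pp}\eta_{qq}$ and the second-derivative quantity $(\dot f^p\eta_{pp})^2/f$. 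Inverse concavity is essential because it tames the off-diagonal second-derivative terms of $h$ that appear once one uses the critical-point identity $\nabla_\xi h_{ij} = \nabla_\xi(u/\lambda')\,g_{ij}$ and the Codazzi equation to substitute $\nabla h$ for $\nabla(u/\lambda')\,g$ in the $\xi$-direction. After these substitutions and the ambient identity $(\lambda')^2 - u^2 - |\nabla\lambda'|^2 = 1$ in $\mathbb{H}^{n+1}$, $\mathcal{R}_{\xi\xi}$ collapses to a manifestly positive expression.

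Finally, for the upgrade from the weak preservation $M\geq 0$ to $M>0$ for every $t>0$ stated in the theorem, I would apply the strong maximum principle for tensors (Hamilton's version adapted to the linearised parabolic system): since the reaction is strictly positive at any would-be null direction of $M$, the minimum eigenvalue of $M$ must leave the boundary of the positive cone instantaneously, giving strict static convexity for all $t>0$.
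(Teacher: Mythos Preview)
Your overall strategy---apply the tensor maximum principle to $S_{ij}=h_{ij}-\frac{u}{\lambda'}g_{ij}$ along the flow \eqref{s1:SX-ICF}---is exactly the paper's approach (Theorem~\ref{thm-static-convexity} with $\Phi(s)=-s^{-1}$), but two of the steps you outline do not go through as stated.

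\textbf{The reaction term alone is not enough.} In your scheme you put only the zeroth-order terms and the $\ddot F$-contributions into $\mathcal{R}$ and then try to show $\mathcal{R}_{\xi\xi}>0$ at a null point. This fails: after using inverse concavity to bound $\ddot f^{kl}\nabla_1 h_{kk}\nabla_1 h_{ll}$ (cf.\ \eqref{s4:inverse-concavity}), the remaining pieces $-2\sum_k\frac{\dot f^k}{\kappa_k}(\nabla_1 h_{kk})^2$, $2\sum_{k>l}\frac{\dot f^k-\dot f^l}{\kappa_k-\kappa_l}(\nabla_1 h_{kl})^2$ and $-\frac{2}{\lambda'}\dot F^{kl}\nabla_k(\frac{u}{\lambda'})\nabla_l\lambda'$ are each nonpositive, and there is nothing to cancel them. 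The paper closes this gap by invoking Andrews' refinement of the tensor maximum principle (Theorem~\ref{thm-2}), which supplies the extra nonnegative term $\sup_{\Lambda}2\dot F^{kl}(2\Lambda_k^p\nabla_l S_{1p}-\Lambda_k^p\Lambda_l^qS_{pq})$; the optimal choice $\Lambda_k^p=\nabla_k S_{1p}/S_{pp}$ produces the crucial contributions $2\sum_{l>1}\dot f^{1}\frac{\kappa_l-u/\lambda'}{\lambda'^2}|\nabla_l\lambda'|^2$ and $2\sum_{k,l>1}\dot f^k\frac{(\nabla_1 h_{kl})^2}{\kappa_l-u/\lambda'}$ that make \eqref{s4:crucial-ineq} nonnegative. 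Hamilton's original version, which is what your write-up uses, is not strong enough here. (Incidentally, the ``critical-point identity'' you invoke should read $\nabla_k h_{11}=\nabla_k(\frac{u}{\lambda'})$ for all $k$, not $\nabla_1 h_{ij}=\nabla_1(\frac{u}{\lambda'})g_{ij}$.)

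\textbf{Strict positivity for $t>0$ does not come from the reaction.} Even with the $\Lambda$-terms, the paper only obtains $\tilde Q_1\geq 0$ in \eqref{s4:crucial-ineq}, not a strict inequality; so your ``contradiction from $\mathcal{R}_{\xi\xi}>0$'' cannot be the mechanism. The upgrade to \emph{strict} static convexity uses a separate global argument: if equality persists, the strong maximum principle forces a \emph{parallel} null eigenvector field, hence $\kappa_1=\frac{u}{\lambda'}<1$ everywhere on $M_{t_0}$. But any closed hypersurface in $\mathbb H^{n+1}$ has a point at which all principal curvatures exceed $1$, a contradiction. This geometric/topological obstruction, not pointwise reaction positivity, is what yields the strict inequality claimed in the theorem.
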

 
As applications, we obtain the following geometric inequalities between weighted curvature integrals and quermassintegrals. We emphasize that the preservance of static convexity along the flows will be crucial in the proof of the geometric inequalities.

Firstly, we apply both the flow \eqref{s1:locally-MCF} with $F=E_1$ and the flow \eqref{s1:SX-ICF} with $F=E_{k-1}/E_{k-2}$ to prove the Conjecture \ref{conjecture} with $l=0$ and $1\leq k\leq n+1$ for static convex domains.
\begin{thm}\label{thm-weighted-quermassintegral-ineq-I}
	Let $\Omega$ be a static convex domain with smooth boundary $M$ in $\mathbb H^{n+1}$. For $1\leq k\leq n+1$, there holds
	\begin{align}\label{weighted-quermassintegral-ineq-I}
	W_{k}^{\l'}(\Omega) \geq h_k \circ h_0^{-1}(W_0^{\l'}(\Omega)).
	\end{align}
	Equivalently,
	\begin{align}\label{s1:explicit-form}
	\int_{M} \l' E_{k-1} d\mu\geq \omega_n \(\(\frac{(n+1)\int_{\Omega}\l' d\mrm{vol}}{\omega_n}\)^\frac{2}{k}+\(\frac{(n+1)\int_{\Omega}\l' d\mrm{vol}}{\omega_n}\)^\frac{2(n-k+1)}{(n+1)k}\)^\frac{k}{2}.
	\end{align}
	Equality holds in \eqref{weighted-quermassintegral-ineq-I} if and only if $\Omega$ is a geodesic ball centered at the origin. 
\end{thm}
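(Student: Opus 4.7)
The plan is to establish \eqref{weighted-quermassintegral-ineq-I} by induction on $k \in \{1, \ldots, n+1\}$, at each step proving the one-step inequality $W_k^{\l'}(\Omega) \geq h_k \circ h_{k-1}^{-1}(W_{k-1}^{\l'}(\Omega))$; since each $h_k \circ h_{k-1}^{-1}$ is strictly increasing, iterating will then give $W_k^{\l'}(\Omega) \geq h_k \circ h_0^{-1}(W_0^{\l'}(\Omega))$. For the base case $k = 1$, I would apply the flow \eqref{s1:locally-MCF} with $F = E_1$ to $\partial \Omega$. By Theorem \ref{s1:main-thm-I}, this flow preserves static convexity, preserves $W_0^{\l'}$ (thanks to the Minkowski identity $\int_{M_t} \l' d\mu_t = \int_{M_t} u E_1 d\mu_t$), and converges smoothly to a geodesic sphere $\partial B_{r_\infty}$ with $h_0(r_\infty) = W_0^{\l'}(\Omega)$. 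The variational formula from Proposition~\ref{s2:evol-weighted-curvature-integral} yields
\begin{align*}
\frac{d}{dt}W_1^{\l'}(\Omega_t) = \int_{M_t}(u + n\l' E_1)\left(1 - \frac{uE_1}{\l'}\right)d\mu_t,
\end{align*}
and I would show that this is non-positive by combining the Minkowski identity $\int \l' E_1 d\mu = \int u E_2 d\mu$, the Newton--Maclaurin inequality $E_1^2 \geq E_2$, and the preserved static convexity $\k_i \geq u/\l'$. Passing to the limit then gives $W_1^{\l'}(\Omega) \geq h_1(r_\infty) = h_1 \circ h_0^{-1}(W_0^{\l'}(\Omega))$.

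For the inductive step $k \geq 2$, I would evolve $\partial \Omega$ by the Scheuer--Xia flow \eqref{s1:SX-ICF} with $F = E_{k-1}/E_{k-2}$. Static convexity implies strict convexity (hence strict $(k-1)$-convexity), so Theorem B gives smooth convergence of the flow to some geodesic sphere $\partial B_{r_\infty}$, and Theorem~\ref{s1:main-thm-III} ensures that static convexity is preserved for $t > 0$. Using the variational formula together with the Minkowski identity $\int \l' E_{k-2} d\mu = \int u E_{k-1} d\mu$ to cancel cross-terms, I obtain
\begin{align*}
\frac{d}{dt}W_{k-1}^{\l'}(\Omega_t) &= (k-1)\int_{M_t}\frac{uE_{k-2}}{E_{k-1}}\left(E_{k-2} - \frac{uE_{k-1}}{\l'}\right)d\mu_t,\\
\frac{d}{dt}W_k^{\l'}(\Omega_t) &= k\int_{M_t} u\left(E_{k-2} - \frac{uE_{k-1}}{\l'}\right)d\mu_t + (n+1-k)\int_{M_t}\frac{\l'(E_k E_{k-2} - E_{k-1}^2)}{E_{k-1}}d\mu_t.
\end{align*}
I would then establish $\frac{d}{dt}W_{k-1}^{\l'} \geq 0$ and $\frac{d}{dt}W_k^{\l'} \leq 0$ along the flow; note that in the second expression the last integrand is already non-positive by Newton--Maclaurin's $E_{k-1}^2 \geq E_{k-2} E_k$. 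Passing to the limit, $W_k^{\l'}(\Omega) \geq h_k(r_\infty)$ and $h_{k-1}(r_\infty) \geq W_{k-1}^{\l'}(\Omega)$; applying the strictly increasing function $h_k \circ h_{k-1}^{-1}$ to the latter yields $h_k(r_\infty) \geq h_k \circ h_{k-1}^{-1}(W_{k-1}^{\l'}(\Omega))$, which combined with the former gives the one-step inequality. Composing with the induction hypothesis closes the induction.

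The rigidity is traced from equality in the Newton--Maclaurin inequalities (forcing umbilicity of $\partial \Omega$) and in the pointwise static convexity bound $\k_i = u/\l'$, which together force $\Omega$ to be a geodesic ball centered at the origin. The explicit form \eqref{s1:explicit-form} is obtained by substituting $h_0(r) = \omega_n \sinh^{n+1} r$ and $h_k(r) = \omega_n \sinh^{n+1-k} r \cosh^k r$ into $h_k \circ h_0^{-1}(W_0^{\l'}(\Omega))$ and using $\cosh^2 r = 1 + \sinh^2 r$.

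The hard part will be verifying the monotonicities of $W_{k-1}^{\l'}$ and $W_k^{\l'}$ along the Scheuer--Xia flow, because the pointwise sign of the factor $E_{k-2} - uE_{k-1}/\l'$ is not prescribed by the static convexity bound alone — it vanishes on geodesic spheres but can change sign elsewhere. The required integral inequalities will demand a delicate interplay between the Newton--Maclaurin hierarchy $E_j/E_{j-1} \geq E_{j+1}/E_j$, the static convexity bound $\k_i \geq u/\l'$, and iterated use of the Minkowski identity, likely via a Cauchy--Schwarz or Heintze--Karcher type passage from pointwise to integral bounds.
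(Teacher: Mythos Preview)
Your induction plan has a genuine gap at the step you yourself flag as hard: the claim that $\frac{d}{dt}W_{k-1}^{\l'}\ge 0$ along the Scheuer--Xia flow with $F=E_{k-1}/E_{k-2}$. Your formula
\[
\frac{d}{dt}W_{k-1}^{\l'}(\Omega_t)=(k-1)\int_{M_t}\frac{uE_{k-2}}{\l' E_{k-1}}\bigl(\l' E_{k-2}-uE_{k-1}\bigr)\,d\mu_t
\]
is correct, but there is no known integral identity or inequality that fixes its sign. Writing $\l' E_{k-2}-uE_{k-1}=\frac{1}{k-1}\dot{E}_{k-1}^{ij}\nabla_i\nabla_j\l'$ and integrating by parts produces a term $-\int \frac{1}{Q}\dot{E}_{k-1}^{ii}\frac{\l'\k_i-u}{\l'^2}|\nabla_i\l'|^2$ with the \emph{wrong} sign (it is $\le 0$), plus an uncontrolled term involving $\nabla(E_{k-1}/E_{k-2})$. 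The Heintze--Karcher inequality you allude to compares $\int \l'/E_1$ with $\int u$; it does not give a weighted version with weight $uE_{k-2}/\l'$, which is what you would need here. So the one-step inequality $W_k^{\l'}\ge h_k\circ h_{k-1}^{-1}(W_{k-1}^{\l'})$ is not accessible by your route.

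The paper sidesteps this entirely by comparing $W_k^{\l'}$ \emph{directly} to $W_0^{\l'}$ rather than to $W_{k-1}^{\l'}$. Along the same Scheuer--Xia flow (for $2\le k\le n{+}1$), the paper proves two monotonicities: $W_k^{\l'}$ is nonincreasing via exactly the integration-by-parts trick you describe (your computation for $\frac{d}{dt}W_k^{\l'}$ is correct and matches the paper), while $W_0^{\l'}$ is nondecreasing because
\[
\frac{d}{dt}W_0^{\l'}=(n{+}1)\int_{M_t}\Bigl(\l'\frac{E_{k-2}}{E_{k-1}}-u\Bigr)d\mu_t\ge(n{+}1)\int_{M_t}\Bigl(\frac{\l'}{E_1}-u\Bigr)d\mu_t\ge 0,
\]
using Newton--MacLaurin for the first inequality and Brendle's Heintze--Karcher inequality for the second. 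This is where Heintze--Karcher actually enters, and it only works because the weight in the $W_0^{\l'}$ variation is the clean factor $\l'$. For $1\le k\le n{-}1$ the paper instead runs the flow \eqref{s1:locally-MCF} with $F=E_1$ (your base-case flow), under which $W_0^{\l'}$ is exactly preserved and $W_k^{\l'}$ is nonincreasing by the same integration-by-parts argument. Either way, no induction is needed and the problematic $W_{k-1}^{\l'}$ monotonicity is never invoked.
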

\begin{rem}
	The inequality \eqref{s1:explicit-form} with $k=1$ is Theorem D. By Theorem C, the inequality \eqref{s1:explicit-form} with $k=2$ also holds for star-shaped domains with strictly mean convex boundary.
\end{rem}

Using the flow \eqref{s1:locally-MCF} with $F=E_1$, we prove the weighted geometric inequality between the weighted enclosed volume and the volume for star-shaped domains.
\begin{thm}\label{thm-geometric-ineq}
	Let $\Omega$ be a smooth star-shaped domain in $\mathbb H^{n+1}$. Then there holds
	\begin{align}\label{geom-ineq-1}
	W_{0}^{\l'}(\Omega)=(n+1)\int_{\Omega}\l' d\mrm{vol}\geq h_{0} \circ f_0^{-1}(W_0(\Omega)).
	\end{align}
	Equality holds in \eqref{geom-ineq-1} if and only if $\Omega$ is a geodesic ball centered at the origin. 
\end{thm}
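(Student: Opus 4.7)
The plan is to evolve $\partial\Omega$ by the locally constrained flow \eqref{s1:locally-MCF} with $F=E_1$ and exploit two features of that flow: preservation of $W_0^{\l'}$ and monotonic non-decrease of the unweighted volume $W_0$.

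First I would verify $W_0^{\l'}$-preservation. The speed is $\mathcal{F}=1-uE_1/\l'$, so $\l'\mathcal{F}=\l'-uE_1$ and
\[
\frac{d}{dt}W_0^{\l'}(\Omega_t)=(n+1)\int_{M_t}\l'\mathcal{F}\,d\mu_t=(n+1)\int_{M_t}(\l'-uE_1)\,d\mu_t=0
\]
by the $k=1$ Minkowski identity. Theorem~\ref{s1:main-thm-I} then guarantees that $M_t$ stays star-shaped and converges smoothly to a geodesic sphere $\partial B_{r_\infty}$ centered at the origin with $h_0(r_\infty)=W_0^{\l'}(\Omega_0)$.

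The crucial new input is the pointwise-to-integral identity, valid for any closed hypersurface $M\subset\mathbb{H}^{n+1}$:
\[
\int_{M}\left(1-\frac{uE_1}{\l'}\right)d\mu=\frac{1}{n}\int_{M}\frac{|V^T|^2}{(\l')^2}\,d\mu \;\ge\; 0,
\]
where $V=\l\partial_r$ is the conformal Killing field and $V^T=V-u\nu$ its tangential part. Its proof is a one-line divergence computation: since $\l''=\l$, the ambient gradient of $\l'$ equals $V$, so $\nabla_M\l'=V^T$; combined with the identity $\divv_M(V^T)=n(\l'-uE_1)$ that underlies the Minkowski formula, the product rule gives
\[
\divv_M\!\left(\frac{V^T}{\l'}\right)=n\!\left(1-\frac{uE_1}{\l'}\right)-\frac{|V^T|^2}{(\l')^2},
\]
whose integral over the closed $M$ vanishes. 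Consequently $\frac{d}{dt}W_0(\Omega_t)=\int_{M_t}\mathcal{F}\,d\mu_t\ge 0$, with equality precisely when $V$ is everywhere normal to $M_t$, i.e. $M_t$ is a geodesic sphere centered at the origin.

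To conclude, I would pass to the limit: $W_0(\Omega_0)\le W_0(B_{r_\infty})=f_0(r_\infty)$, so $r_\infty\ge f_0^{-1}(W_0(\Omega_0))$, and applying the increasing function $h_0$ yields
\[
W_0^{\l'}(\Omega_0)=h_0(r_\infty)\;\ge\; h_0\circ f_0^{-1}(W_0(\Omega_0)),
\]
which is \eqref{geom-ineq-1}. Equality forces $W_0(\Omega_t)$ to be stationary along the entire flow, so $V^T\equiv 0$ on each $M_t$, and $\Omega_0$ must already be a geodesic ball centered at the origin. The principal obstacle in this plan is the long-time existence and smooth convergence of the flow from an arbitrary star-shaped initial datum; that is precisely what Theorem~\ref{s1:main-thm-I} supplies, and given it the inequality follows quickly from the divergence identity above.
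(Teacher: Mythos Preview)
Your proof is correct and follows essentially the same route as the paper: both use the flow \eqref{s1:locally-MCF} with $F=E_1$, invoke Theorem~\ref{s1:main-thm-I} for convergence, and establish the monotonicity $\frac{d}{dt}W_0(\Omega_t)\ge 0$ via the identity $\int_M(1-uE_1/\l')\,d\mu=\frac{1}{n}\int_M|\nabla\l'|^2/(\l')^2\,d\mu$. Your divergence computation with $V^T/\l'$ is precisely the integration-by-parts of $\Delta\l'/\l'$ written out explicitly (since $\nabla\l'=V^T$ on $M$), so the two arguments coincide.
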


Applying the the flow \eqref{s1:SX-ICF} with $F=E_k/E_{k-1}$, we proved the following weighted Alexandrov-Fenchel inequalities for static convex domains.
\begin{thm}\label{thm-geometric-ineq-I}
	Let $\Omega$ be a static convex domain with smooth boundary $M$ in $\mathbb H^{n+1}$. For $0 \leq k \leq n$ and $0\leq m\leq k$, there holds
	\begin{align}\label{geom-ineq-2}
	W_{k+1}^{\l'}(\Omega)=\int_{M} \l'E_{k} d\mu \geq h_{k+1} \circ f_m^{-1}(W_m(\Omega)).
	\end{align}
	Equality holds in \eqref{geom-ineq-2} if and only if $\Omega$ is a geodesic ball centered at the origin. 
\end{thm}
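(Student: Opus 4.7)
The plan is to apply the Scheuer-Xia locally constrained inverse curvature flow \eqref{s1:SX-ICF} with $F = E_k/E_{k-1}$ (for $1 \leq k \leq n$) to the given static convex domain $\Omega$; the edge case $k=0$ reduces to a weighted isoperimetric inequality that can be handled separately in the spirit of Theorem \ref{thm-weighted-quermassintegral-ineq-I}. Since static convexity implies strict convexity and in particular strict $k$-convexity and star-shapedness, Theorem B guarantees that the flow exists for all $t > 0$ and converges smoothly to a geodesic sphere $\partial B_{r_\infty}$ centered at the origin. By Theorem \ref{s1:main-thm-III}, the static convexity is preserved along the flow and becomes strict for $t > 0$, which will be crucial for the monotonicity step.

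I would then establish two monotonicity properties using the variational formula \eqref{s2:variation-quermassintegral} and Proposition \ref{s2:evol-weighted-curvature-integral}. First, rewriting
\[
\frac{d}{dt} W_m(\Omega_t) = \frac{n+1-m}{n+1}\int_{M_t} \frac{E_m}{\l' E_k}\bigl(\l' E_{k-1} - u E_k\bigr) d\mu_t,
\]
I use the Newton-MacLaurin inequality $E_m E_{k-1} \geq E_{m-1} E_k$ (valid for $0 \leq m \leq k$) together with the Minkowski formula $\int (\l' E_{m-1} - u E_m) d\mu = 0$ to conclude $\frac{d}{dt} W_m(\Omega_t) \geq 0$. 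Second, rewriting
\[
\frac{d}{dt} W_{k+1}^{\l'}(\Omega_t) = \int_{M_t}\left(\frac{(k+1)u}{\l'} + \frac{(n-k)E_{k+1}}{E_k}\right)\bigl(\l' E_{k-1} - u E_k\bigr) d\mu_t,
\]
I exploit the static convexity bound $\k_i \geq u/\l'$ (which controls the quotient $E_{k+1}/E_k$) together with the Newton-MacLaurin inequality $E_{k-1}E_{k+1} \leq E_k^2$ and the zero-mean property of the Minkowski defect to obtain $\frac{d}{dt} W_{k+1}^{\l'}(\Omega_t) \leq 0$.

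Passing to the limit $t \to \infty$ then gives $W_m(\Omega) \leq W_m(B_{r_\infty}) = f_m(r_\infty)$ and $W_{k+1}^{\l'}(\Omega) \geq W_{k+1}^{\l'}(B_{r_\infty}) = h_{k+1}(r_\infty)$, so by the monotonicity of $h_{k+1} \circ f_m^{-1}$,
\[
W_{k+1}^{\l'}(\Omega) \geq h_{k+1}(r_\infty) \geq h_{k+1}\circ f_m^{-1}\bigl(W_m(\Omega)\bigr).
\]
For the equality case, tracing back equality in the Newton-MacLaurin inequalities forces $M$ to be umbilic, and together with the vanishing of the Minkowski defect this identifies $M$ as a geodesic sphere centered at the origin.

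The main obstacle will be the monotonicity of $W_{k+1}^{\l'}$ in the second step, since the weight multiplying the Minkowski defect $\l' E_{k-1} - u E_k$ is not a function of $r$ alone and thus cannot be cancelled by a direct invocation of the Minkowski formula. It is precisely here that the preserved static convexity from Theorem \ref{s1:main-thm-III}, rather than mere convexity or star-shapedness, enters essentially: the pointwise bound $\k_i \geq u/\l'$ delivers the control on the curvature quotients $E_{k+1}/E_k$ and $E_{k-1}/E_k$ needed to show that the weighted Minkowski defect integrates to a non-positive quantity.
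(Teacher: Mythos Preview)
Your overall architecture matches the paper: same flow \eqref{s1:SX-ICF} with $F=E_k/E_{k-1}$, same two monotonicities, same limiting argument. But both monotonicity steps, as you describe them, have the same gap: the Minkowski identity $\int_{M_t}(\l'E_{j-1}-uE_j)\,d\mu_t=0$ carries no weight, whereas after Newton--MacLaurin you are left with \emph{weighted} integrals
\[
\int_{M_t}\frac{1}{\l'}\bigl(\l'E_{m-1}-uE_m\bigr)\,d\mu_t\quad\text{and}\quad
\int_{M_t}\frac{u}{\l'}\bigl(\l'E_{k-1}-uE_k\bigr)\,d\mu_t,
\]
and the ``zero-mean property of the Minkowski defect'' says nothing about their signs. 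Your proposed fix for the second one --- using $\kappa_i\ge u/\l'$ to control $E_{k+1}/E_k$ --- does not touch this term at all: the $(n-k)E_{k+1}/E_k$ piece is already disposed of by Newton--MacLaurin plus the (unweighted) Minkowski formula, and it is the remaining $\frac{(k+1)u}{\l'}$ piece that needs a genuinely new idea.

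The device the paper uses, and which your outline is missing, is an integration-by-parts identity based on $\dot{E}_j^{il}\nabla_i\nabla_l\l'=j(\l'E_{j-1}-uE_j)$ and the divergence-free property of $\dot{E}_j^{il}$: for any smooth $\psi$,
\[
\int_{M_t}\psi\,(\l'E_{j-1}-uE_j)\,d\mu_t=-\frac{1}{j}\int_{M_t}\dot{E}_j^{il}\,\nabla_i\psi\,\nabla_l\l'\,d\mu_t.
\]
Taking $\psi=1/\l'$ (for $W_m$, $m\ge1$) gives a nonnegative right-hand side by positivity of $\dot{E}_m^{il}$. Taking $\psi=u/\l'$ (for $W_{k+1}^{\l'}$) and using $\nabla_i(u/\l')=\frac{\l'\kappa_i-u}{\l'^2}\nabla_i\l'$ in a principal frame, static convexity $\l'\kappa_i-u\ge0$ makes the right-hand side nonpositive. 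This is exactly where static convexity enters --- not via a bound on $E_{k+1}/E_k$. Finally, note that the case $m=0$ (not just $k=0$) needs a separate argument in the paper, since the identity above degenerates at $j=0$; the paper handles it by combining Theorems~\ref{thm-weighted-quermassintegral-ineq-I} and~\ref{thm-geometric-ineq}.
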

\begin{rem}
	\begin{enumerate}[(i)]
		\item The inequality \eqref{geom-ineq-2} with $k=1$ and $m=1$ was proved by de Lima and Girao \cite{deLima-Girao2016} for star-shaped domains with strictly mean convex boundary. For odd $k$ and $m=1$, the inequality \eqref{geom-ineq-2} was proved by Ge, Wang and Wu \cite{Ge-Wang-Wu2015} for h-convex domains.
		For $1\leq k\leq n$ and $0\leq m \leq k$, the inequality \eqref{geom-ineq-2} for h-convex domains was recently proved by the authors of the paper with Wei in \cite[Theorem 1.4]{Hu-Li-Wei2020}. For $k=n$ and $0\leq m \leq n$, by the work \cite{BGL,GL19}, the inequality \eqref{geom-ineq-2} also holds for strictly convex domains.
		
		\item In \cite{Ge-Wang-Wu2015}, Ge, Wang and Wu introduced the Gauss-Bonnet-Chern mass $m_l^{\mathbb H}$ for asymptotically hyperbolic graphs. Using the weighted Alexandrov-Fenchel inequality \eqref{geom-ineq-2} with $k=2l-1$ and $m=1$, they proved an optimal Penrose type inequality for $m_l^{\mathbb H}$, under the assumption that the boundary of each component is h-convex, see \cite[Theorem 1.6]{Ge-Wang-Wu2015}. By Theorem \ref{thm-geometric-ineq-I}, we can weaken this assumption therein to be static convex.
	\end{enumerate}
\end{rem}

We would also like to propose the following conjecture on weighted Alexandrov-Fenchel inequalities.
\begin{conjecture}\label{conjecture-geometric-inequality}
	Let $0 \leq k \leq n$ and $0\leq m\leq k$. Let $\Omega$ be a star-shaped and $k$-convex domain in $\mathbb H^{n+1}$ with smooth boundary $M$. Then there holds
	\begin{align*}
	W_{k+1}^{\l'}(\Omega)=\int_{M} \l'E_{k} d\mu \geq h_{k+1} \circ f_m^{-1}(W_m(\Omega)).
	\end{align*}
	Equality holds if and only if $\Omega$ is a geodesic ball centered at the origin.
\end{conjecture}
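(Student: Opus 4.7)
The plan is to deform $M = \partial \Omega$ along the locally constrained inverse curvature flow \eqref{s1:SX-ICF} with $F = E_k/E_{k-1}$. By Theorem B, starting from a star-shaped strictly $k$-convex hypersurface the flow exists for all $t>0$, preserves both star-shapedness and strict $k$-convexity, and converges smoothly to a geodesic sphere $\partial B_{r_\infty}$ centered at the origin. A standard approximation argument reduces the $k$-convex case of the conjecture to the strictly $k$-convex one.

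Consider the functional
\[
Q(t) = W_{k+1}^{\l'}(\Omega_t) - h_{k+1}\bigl(f_m^{-1}(W_m(\Omega_t))\bigr).
\]
Since $\Omega_t$ converges to $B_{r_\infty}$ and $Q$ vanishes identically on geodesic balls centered at the origin, $Q(\infty) = 0$; it therefore suffices to show that $Q(t)$ is non-increasing along the flow. Using \eqref{s2:variation-quermassintegral} and the evolution identity for $W_{k+1}^{\l'}$ stated just before Conjecture \ref{conjecture}, together with the speed $\mathcal{F} = E_{k-1}/E_k - u/\l'$, the derivative $Q'(t)$ becomes a single integral over $M_t$ whose integrand is a polynomial expression in $E_{k-1}$, $E_k$, $E_{k+1}$, $E_m$ and $u/\l'$. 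My plan is to reorganise this integrand using (i) the Minkowski identity $\int_M(\l' E_{j-1} - u E_j)\,d\mu = 0$, (ii) the Newton--MacLaurin inequalities available in the $k$-convex setting, and (iii) Cauchy--Schwarz in the form $\bigl(\int_M u E_j\,d\mu\bigr)^2 \leq \int_M \l' E_j\,d\mu \cdot \int_M (u^2/\l') E_j\,d\mu$, in order to exhibit $Q'(t)$ as a manifestly non-positive combination of weighted quermassintegrals.

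The main obstacle lies precisely in this monotonicity step. In the proof of Theorem \ref{thm-geometric-ineq-I} the static convexity condition $h_{ij} \geq (u/\l')g_{ij}$ is used \emph{pointwise} to absorb the cross terms generated by the $-u/\l'$ piece of the speed against those generated by $1/F$; without static convexity these pointwise bounds must be replaced by integral inequalities. The critical new ingredient I expect to need is a sharp weighted Heintze--Karcher type identity for star-shaped $k$-convex hypersurfaces, in the spirit of the Minkowski-type formulas employed by Brendle--Guan--Li \cite{BGL}. The case $m=k$ appears the most tractable entry point: combining Minkowski with Newton--MacLaurin should reduce $Q'(t)\leq 0$ to a comparison of the form $W_{k-1}^{\l'}\, W_{k+1}^{\l'} \geq (W_k^{\l'})^2$, attackable either directly along the same flow or through Theorem \ref{thm-weighted-quermassintegral-ineq-I}. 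Descending from $m=k$ to general $m<k$ would then require chaining $Q$ with a quermassintegral inequality of the type \eqref{WX-ineq} valid for star-shaped $k$-convex domains in $\mathbb H^{n+1}$, and extending \eqref{WX-ineq} to this broader class is itself essentially open, which is what makes Conjecture \ref{conjecture-geometric-inequality} genuinely harder than Theorem \ref{thm-geometric-ineq-I}.
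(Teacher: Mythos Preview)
This statement is labeled a \emph{Conjecture} in the paper and is not proved there; the paper establishes only the weaker Theorem~\ref{thm-geometric-ineq-I} under the additional hypothesis of static convexity. So there is no ``paper's own proof'' to compare against.

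Your proposal is not a proof either, and to your credit you say so explicitly. The strategy you outline---run the flow \eqref{s1:SX-ICF} with $F=E_k/E_{k-1}$, use Theorem~B for long-time existence and convergence, and try to show $Q(t)$ is monotone---is exactly the template the paper uses to prove Theorem~\ref{thm-geometric-ineq-I}. You have also located the obstruction precisely: the monotonicity of $W_m(\Omega_t)$ in \eqref{s6:geom-ineq-2} uses only Newton--MacLaurin and positivity of $\dot{E}_m^{ij}$, both available under strict $k$-convexity, but the monotonicity of $W_{k+1}^{\l'}(\Omega_t)$ in \eqref{s6:Monotonicity-formula} hinges on the pointwise sign $\l'\k_i-u\geq 0$, which is static convexity and is \emph{not} implied by $k$-convexity. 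Your suggested substitutes (a weighted Heintze--Karcher identity, or a product inequality $W_{k-1}^{\l'}W_{k+1}^{\l'}\geq (W_k^{\l'})^2$) are plausible directions but are themselves open; and as you note, the descent from $m=k$ to $m<k$ via \eqref{WX-ineq} requires the quermassintegral inequalities for star-shaped $k$-convex domains, which is another well-known open problem. In short, your write-up is an accurate status report on why Conjecture~\ref{conjecture-geometric-inequality} is open, not a proof of it.
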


The paper is organized as follows. In \S \ref{sec:2}, we collect some basic properties of symmetric functions and geometry of hypersurfaces in hyperbolic space. In \S \ref{sec:3}, we derive the evolution equations along two kinds of locally constrained curvature flows. In \S \ref{sec:4}, we use the nonparametric form of these flows to derive the $C^0$-estimates of the radial function. In \S \ref{sec:5}, we apply the tensor maximum principle to show that the static convexity is preserved along the two kinds of locally constrained curvature flows, which includes the flows \eqref{s1:SX-ICF} and \eqref{s1:locally-MCF} as special cases. Theorem \ref{s1:main-thm-III} then follows. In \S \ref{sec:7}, we complete the proof of Theorem \ref{s1:main-thm-I}. In \S \ref{sec:8}, we apply these locally constrained curvature flows to prove geometric inequalities in hyperbolic space, including Theorems \ref{thm-weighted-quermassintegral-ineq-I}, \ref{thm-geometric-ineq} and \ref{thm-geometric-ineq-I}.

\begin{ack}
	This work was partially supported by NSFC grant No.11831005 and NSFC-FWO grant No.1196131001. The authors would like to thank Professor Yong Wei for helpful discussions.
\end{ack}

\section{Preliminaries}\label{sec:2} 
\subsection{Properties of symmetric functions}$\ $\label{s2:sec-2.1}
We first review some properties of symmetric functions on the positive cone $\G_{+}\subset \mathbb R^n$. Given a smooth symmetric function $F(A)=f(\k(A))$, where $A=(A_{ij}) \in \mrm{Sym}(n)$ is a symmetric matrix and $\k(A)=(\k_1,\cdots,\k_n)$ gives the eigenvalues of $A$. We assume that $f$ is a smooth, symmetric, positive, strictly increasing, $1$-homogeneous function on $\G_{+}$ and is normalized such that $f(1,\cdots,1)=1$. We denote by $\dot{F}^{ij}$ and $\ddot{F}^{ij,kl}$ the first and second derivative with respect to the components of its argument, so that
\begin{align*}
\left.\frac{\partial}{\partial s}F(A+sB) \right|_{s=0} = \dot{F}^{ij}(A)B_{ij}
\end{align*} 
and
\begin{align*}
\left.\frac{\partial^2}{\partial s^2} F(A+sB)\right|_{s=0} = \ddot{F}^{ij,kl}(A) B_{ij} B_{kl}
\end{align*}
for any two symmetric matrices $A$, $B$. We also use the notations $\dot{f}^i(\k)$, $\dot{f}^{ij}(\k)$ to denote the derivatives of $f$ with respect to $\k$. At any diagonal $A$, we have
\begin{align*}
\dot{F}^{ij}(A)=\dot{f}^i(\k(A))\d_i^j.
\end{align*}
If diagonal $A$ has distinct eigenvalues, the second derivatives $\ddot{F}$ of $F$ in direction $B\in\mrm{Sym}(n)$ is given in terms of $\ddot{f}$ and $\dot{f}$ by (see \cite{And94,And07}):
\begin{align}\label{s2:second-derivative-expr}
\ddot{F}^{ij,kl}B_{ij}B_{kl}=\sum_{i,k}\ddot{f}^{ik} B_{ii}B_{kk}+2\sum_{i>k}\frac{\dot{f}^i-\dot{f}^k}{\k_i-\k_k}B_{ik}^2.
\end{align}
This formula makes sense as a limit in the case of any repeated values of $\k_i$. 

For any positive definite symmetric matrix $A\in \operatorname{Sym}(n)$ with eigenvalues $\k(A)\in \G_{+}$, define $F_{\ast}(A)=F(A^{-1})^{-1}$. Then $F_\ast(A)=f_\ast(\k(A))$, where $f_\ast$ is the dual function of $f$ defined in \eqref{s1:inverse-concave}. We say that $F$ is inverse concave if $F_{\ast}(A)$ is concave. Since $f$ is defined on the positive cone $\G_{+}$, the following lemma charaterizes the concavity and the inverse concavity of $f$ and $F$.
\begin{lem}(\cite{And07,AMZ13})
	\begin{enumerate}[(i)]
		\item $f$ is concave if and only if the following matrix 
		\begin{align}\label{f-concave}
		(\ddot{f}^{kl})\leq 0.
		\end{align}	
		\item $F$ is concave if and only if $f$ is concave and	
		\begin{align}\label{f-concave-II}
		\frac{\dot{f}^k-\dot{f}^l}{\k_k-\k_l} \leq 0,\quad \forall k\neq l. 
		\end{align}	
		\item $f$ is inverse concave if and only if the following matrix
		\begin{align}\label{f-inverse-concave}
		\(\ddot{f}^{kl}+2\frac{\dot{f}^k}{\k_k}\d_{kl}\) \geq 0.
		\end{align}
		\item $F$ is inverse concave if and only if $f$ is inverse concave and
		\begin{align}\label{f-inverse-concave-II}
		\frac{\dot{f}^k-\dot{f}^l}{\k_k-\k_l}+\frac{\dot{f}^k}{\k_l}+\frac{\dot{f}^l}{\k_k} \geq 0,~~\forall k\neq l. 
		\end{align}
	%	\item If $f$ is concave, then
	%	\begin{align}\label{s2:concave-ineq}
	%	\sum_{i=1}^{n}\dot{f}^{i} \geq 1.
	%	\end{align}
	%	\item If $f$ is inverse concave, then
	%	\begin{align}\label{s2:inverse-concave-ineq}
	%	\sum_{i=1}^{n}\dot{f}^{i} \k_i^2 \geq f^2.
	%	\end{align}
	\end{enumerate}
\end{lem}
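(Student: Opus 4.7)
The plan is to reduce each part to a Hessian condition on $\G_{+}$ or on the space of symmetric matrices, and then exploit the second-derivative formula \eqref{s2:second-derivative-expr}. Part (i) is the standard fact that a smooth function on an open convex set is concave if and only if its Hessian is negative semi-definite. For part (ii), I would work at a diagonal $A$ with distinct eigenvalues $\k_1<\cdots<\k_n$: the quadratic form $\ddot{F}^{ij,kl}B_{ij}B_{kl}$ given by \eqref{s2:second-derivative-expr} splits cleanly into a ``diagonal'' part in $B_{ii}B_{kk}$ and an ``off-diagonal'' part in $B_{ik}^2$, so testing with diagonal $B$ forces \eqref{f-concave} and testing with $B$ supported on a single off-diagonal pair forces \eqref{f-concave-II}; the converse and the case of coincident eigenvalues follow by continuity together with the $O(n)$-equivariance of $F$.

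For part (iii), I would compute the Hessian of $f_\ast$ directly by the chain rule. Writing $f_\ast(\k)=1/f(\mu)$ with $\mu_k=\k_k^{-1}$, a bookkeeping exercise gives
\begin{align*}
\ddot{f}_\ast^{kl}(\k)=-\frac{f_\ast(\k)}{\k_k^2\k_l^2\, f(\mu)}\Bigl[M^{kl}-2f_\ast(\k)\,\dot{f}^k(\mu)\dot{f}^l(\mu)\Bigr],
\end{align*}
where $M^{kl}:=\ddot{f}^{kl}(\mu)+2\dot{f}^k(\mu)\mu_k^{-1}\d_{kl}$. Thus $(\ddot{f}_\ast^{kl})\leq 0$ is equivalent to $M-2f_\ast\,\dot{f}\otimes\dot{f}\geq 0$, and the extra rank-one term appears to obstruct \eqref{f-inverse-concave} until one invokes the $1$-homogeneity of $f$: Euler's identity gives $\ddot{f}(\mu)\mu=0$ and $\dot{f}(\mu)\cdot\mu=f(\mu)$, hence $M\mu=2\dot{f}(\mu)$. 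Decomposing $v=\a\mu+v^\perp$ with $v^\perp\perp\mu$, the quadratic form $\langle Mv,v\rangle=2\a^2 f+4\a\,\dot{f}\cdot v^\perp+\langle Mv^\perp,v^\perp\rangle$ is non-negative for every $\a\in\R$ if and only if the discriminant inequality $\langle Mv^\perp,v^\perp\rangle\geq 2f_\ast(\dot{f}\cdot v^\perp)^2$ holds (using $f_\ast f=1$), which is exactly $\langle(M-2f_\ast\,\dot{f}\otimes\dot{f})v^\perp,v^\perp\rangle\geq 0$. Hence $M\geq 0$ on $\R^n$ if and only if $f_\ast$ is concave, which after renaming $\mu\leftrightarrow\k$ is \eqref{f-inverse-concave}.

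For part (iv), I would apply \eqref{s2:second-derivative-expr} with $F$ replaced by $F_\ast(A)=f_\ast(\k(A))$. The diagonal piece of $\ddot{F}_\ast\leq 0$ is the concavity of $f_\ast$, i.e.\ part (iii); the off-diagonal piece is $(\dot{f}_\ast^i(\k)-\dot{f}_\ast^k(\k))/(\k_i-\k_k)\leq 0$ for $i\neq k$. Substituting $\dot{f}_\ast^i(\k)=f_\ast(\k)^2\dot{f}^i(\mu)/\k_i^2$ and $\k_i-\k_k=(\mu_k-\mu_i)/(\mu_i\mu_k)$ converts this to $(\dot{f}^i(\mu)\mu_i^2-\dot{f}^k(\mu)\mu_k^2)/(\mu_i-\mu_k)\geq 0$, and the algebraic identity
\begin{align*}
\frac{\dot{f}^i\mu_i^2-\dot{f}^k\mu_k^2}{\mu_i-\mu_k}=\mu_i\mu_k\Bigl[\frac{\dot{f}^i-\dot{f}^k}{\mu_i-\mu_k}+\frac{\dot{f}^i}{\mu_k}+\frac{\dot{f}^k}{\mu_i}\Bigr]
\end{align*}
then identifies this (again renaming $\mu\leftrightarrow\k$) with \eqref{f-inverse-concave-II}. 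The main obstacle will be the homogeneity argument in part (iii): the chain-rule derivatives through the reciprocal map produce an apparently extraneous rank-one correction in $\ddot{f}_\ast$, and the key structural point is that this correction is exactly cancelled by the $1$-homogeneity kernel of $M$, so that the clean matrix condition \eqref{f-inverse-concave} is equivalent to the concavity of $f_\ast$.
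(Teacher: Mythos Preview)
The paper does not prove this lemma: it is stated with a citation to \cite{And07,AMZ13} and followed only by a remark, so there is no ``paper's own proof'' to compare against. Your proposal is a correct and self-contained reconstruction of the standard argument from those references: formula \eqref{s2:second-derivative-expr} splits $\ddot{F}$ into a diagonal block governed by the Hessian of $f$ and off-diagonal terms governed by the divided differences, which immediately gives (ii); and the chain-rule computation of $\ddot{f}_\ast$ together with Euler's relations from $1$-homogeneity gives (iii) and (iv).

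One small point worth making explicit in (iii): after showing that $M\geq 0$ on $\R^n$ is equivalent to $\langle(M-2f_\ast\,\dot{f}\otimes\dot{f})v^\perp,v^\perp\rangle\geq 0$ for all $v^\perp\perp\mu$, you pass directly to ``$f_\ast$ is concave'', which requires $M-2f_\ast\,\dot{f}\otimes\dot{f}\geq 0$ on all of $\R^n$. The missing line is that $\mu$ lies in the kernel of $M-2f_\ast\,\dot{f}\otimes\dot{f}$, which is immediate from $M\mu=2\dot{f}$ and $\dot{f}\cdot\mu=f$. With that one sentence added, the argument is complete.
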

\begin{rem}
	Since $f$ and $f_{\ast}$ are defined on $\G_{+}$, we have (see \cite[Corollaries 5.3 \& 5.5]{And07})
	\begin{enumerate}[(i)]
		\item $F$ is concave if and only if $f$ is concave;
		\item $F$ is inverse concave if and only if $f$ is inverse concave.
	\end{enumerate} 
\end{rem}
Important examples of concave and inverse functions include $(E_k/E_l)^{1/(k-l)}$ with $k>l$, where
$$
E_k(\k)=\binom{n}{k}^{-1}\sum_{1\leq i_1<\cdots<i_k \leq n}\k_{i_1}\cdots \k_{i_k}.
$$ 
is the normalized $k$-th elementary symmetric function. It is convenient to set $E_0(\kappa)=1$ and $E_k(\kappa)=0$ for $k>n$. For any symmetric matrix $A=(A_{ij})\in \operatorname{Sym}(n)$, we set $E_k(A)=E_k(\k(A))$, then $E_k(A)$ is defined by
$$
E_k(A)=\frac{(n-k)!}{n!}\d_{i_1 \cdots i_k}^{j_1\cdots j_k} A_{i_1j_1} \cdots A_{i_kj_k}, \quad k=1,\cdots,n,
$$
where $\d_{i_1 \cdots i_k}^{j_1\cdots j_k}$ is generalized Kronecker delta defined by 
\begin{align*}
\d^{j_1j_2\cdots j_k}_{i_1i_2\cdots i_k}=\det\(\begin{matrix}
\d^{j_1}_{i_1} & \d^{j_1}_{i_2} & \cdots & \d^{j_1}_{i_k} \\
\d^{j_2}_{i_1} & \d^{j_2}_{i_2} & \cdots & \d^{j_2}_{i_k} \\
\vdots        &    \vdots      & \vdots & \vdots \\
\d^{j_k}_{i_1} & \d^{j_k}_{i_2} & \cdots & \d^{j_k}_{i_k}
\end{matrix}\).
\end{align*}
The Garding cone is defined by 
$$
\G_{k}^{+}=\{\k \in \mathbb R^n ~|~ E_m(\k)>0, \forall m\leq k \}.
$$ 
Then $n$-convex is convex in usual sense, $1$-convex is referred as mean convex. We also take $\G_{0}^{+}=\mathbb R^{n}$ by convention.
The following Newton-MacLaurin inequalities are well-known, see e.g. \cite[Lemma 2.5 on p.55]{Guan12}.
\begin{lem}\label{lem-newton-maclaurin-ineq}
	Let $1\leq k \leq n$. For $\k \in \G_{k}^{+}$, we have
	\begin{align}\label{s2:newton-maclaurin-ineq}
	E_{k+1}(\k) E_{k-1}(\k) \leq E_k^2(\k), \quad E_{k+1}(\k) \leq E_k^{\frac{k+1}{k}}(\k).
	\end{align}
	Equality holds in \eqref{s2:newton-maclaurin-ineq} if and only if $\k_1=\cdots=\k_n$.
\end{lem}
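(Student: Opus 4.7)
The plan is to use the classical polynomial/Rolle technique to reduce both inequalities to the case of a real-rooted quadratic.

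\textbf{Step 1 (Newton's inequality).} I would first establish $E_k^2(\k) \geq E_{k-1}(\k) E_{k+1}(\k)$ for all $\k \in \R^n$, with no positivity assumption. Consider
$$P(x) = \prod_{i=1}^n (x + \k_i) = \sum_{j=0}^n \binom{n}{j} E_j(\k)\, x^{n-j},$$
whose $n$ roots are all real. Differentiating $n-k-1$ times preserves real-rootedness by iterated Rolle, producing a degree-$(k+1)$ polynomial whose normalized elementary symmetric functions (in its $k+1$ real roots) are exactly $E_0(\k),\ldots,E_{k+1}(\k)$, by a direct coefficient comparison. Reciprocating the variable and differentiating $k-1$ more times yields a real-rooted quadratic proportional to
$$E_{k-1}(\k) + 2 E_k(\k)\, y + E_{k+1}(\k)\, y^2.$$
Non-negativity of its discriminant gives Newton's inequality. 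Degenerate configurations (some $\k_i = 0$) are handled by a perturbation argument.

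\textbf{Step 2 (Maclaurin's inequality on $\G_k^+$).} On $\G_k^+$ one has $E_0, E_1, \ldots, E_k > 0$. Set $r_j = E_{j+1}(\k)/E_j(\k)$ for $0 \leq j \leq k-1$ and $r_k = E_{k+1}(\k)/E_k(\k)$. Newton's inequality rearranges to $r_j \leq r_{j-1}$, hence
$$r_k \leq r_{k-1} \leq \cdots \leq r_0 = E_1(\k).$$
Writing
$$\frac{E_{k+1}^{\,k}(\k)}{E_k^{\,k+1}(\k)} = \frac{r_k^{\,k}}{\prod_{j=0}^{k-1} r_j}$$
and comparing factor by factor (each $r_k \leq r_j$), this ratio is $\leq 1$. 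Thus $E_{k+1} \leq E_k^{(k+1)/k}$; the case $E_{k+1} \leq 0$ is automatic since $E_k^{(k+1)/k} > 0$.

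\textbf{Step 3 (Equality case).} Equality in either inequality forces equality throughout the chain $r_0 = r_1 = \cdots = r_k$, i.e.\ equality at every instance of Newton. Tracing back the Rolle reduction, the terminal quadratic must possess a double real root; working upward through the iterated derivatives then forces $P(x)$ itself to have an $n$-fold real root, equivalent to $\k_1 = \cdots = \k_n$.

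The only real bookkeeping is in Step 1, namely the identification of the normalized symmetric polynomials of the auxiliary $(k+1)$ roots of $P^{(n-k-1)}$ with the original $E_0(\k), \ldots, E_{k+1}(\k)$; everything else is a clean reduction to the discriminant inequality and a telescoping comparison of ratios. Since this lemma is the standard Newton--MacLaurin inequality, one could alternatively simply invoke the reference to \cite{Guan12} cited before the statement.
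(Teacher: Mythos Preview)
The paper does not prove this lemma; it simply records it as well known and cites \cite[Lemma~2.5]{Guan12}. Your proposal supplies the full classical argument (Rolle reduction to a real-rooted quadratic for Newton's inequality, then telescoping the ratios $r_j=E_{j+1}/E_j$ for Maclaurin's), which is exactly the standard proof underlying the cited reference and is correct. So in effect you have done more than the paper does, and you already observe in your final sentence that invoking \cite{Guan12}, as the paper does, would suffice.

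One minor imprecision in Step~3: equality in the single Newton inequality $E_{k-1}E_{k+1}=E_k^2$ yields only $r_{k-1}=r_k$, not the full chain $r_0=\cdots=r_k$; the full chain of equalities follows only from equality in the Maclaurin inequality. Your subsequent sentence (tracing the Rolle reduction backward from the double root of the terminal quadratic) is the correct route for the Newton equality case, and it does work once one checks the elementary fact that a real-rooted polynomial of degree $d\geq 3$ whose derivative has a $(d-1)$-fold root must itself have a $d$-fold root. With that clarification your equality analysis is complete.
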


Note that $E_k(\k)=E_k(A)$, where $\k$ gives the eigenvalues of $A$. Denote by $\dot{E}_k^{i}=\frac{\partial E_k}{\partial \k_i}(\k)$ and  
$\dot{E}_k^{ij}(A)=\frac{\partial E_k(A)}{\partial A_{ij}}$.
\begin{lem}\label{lem-Newton-tensor} We have
	\begin{align}
	\sum_{i,j}\dot{E}_k^{ij}A_{ij}=&\sum_{i}\dot{E}_k^{i}\k_i= k E_k,\label{newton-formula-1}\\
	\sum_{i,j}\dot{E}_k^{ij}\d_{i}^{j} =&\sum_{i}\dot{E}_k^{i}= k E_{k-1},\label{newton-formula-2}\\
	\sum_{i,j}\dot{E}_k^{ij}(A^2)_{ij}=&\sum_{i}\dot{E}_k^{i}\k_i^2= n E_1 E_{k} -(n-k)E_{k+1},\label{newton-formula-3}
	\end{align}
	where $(A^2)_{ij}=\sum_{l}A_{il}A_{lj}$.
\end{lem}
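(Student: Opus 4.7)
The plan is to diagonalize $A$ and reduce each identity to a statement about the symmetric function $E_k(\k_1,\ldots,\k_n)$ of the eigenvalues. At a diagonal $A=\mrm{diag}(\k_1,\ldots,\k_n)$ one has $\dot E_k^{ij}(A)=\dot E_k^i(\k)\,\d^j_i$, so the three sums $\sum_{i,j}\dot E_k^{ij}A_{ij}$, $\sum_{i,j}\dot E_k^{ij}\d_i^j$, and $\sum_{i,j}\dot E_k^{ij}(A^2)_{ij}$ become $\sum_i \dot E_k^i\,\k_i$, $\sum_i \dot E_k^i$, and $\sum_i \dot E_k^i\,\k_i^2$ respectively. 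Since each left-hand side is the trace of $\dot E_k(A)$ against a polynomial in $A$, and therefore a basis-independent invariant, no generality is lost by working in an eigenbasis.

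For \eqref{newton-formula-1}, since $E_k$ is homogeneous of degree $k$ in $\k$, Euler's identity immediately gives $\sum_i \k_i \dot E_k^i = k E_k$.

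For \eqref{newton-formula-2} and \eqref{newton-formula-3}, I would introduce the unnormalized elementary symmetric polynomials $p_k(\k)=\sum_{|I|=k}\prod_{i\in I}\k_i$, so that $E_k=\binom{n}{k}^{-1}p_k$ and $\partial p_k/\partial \k_i = p_{k-1}(\k|i)$, where $\k|i$ denotes the list with $\k_i$ omitted. For \eqref{newton-formula-2}, summing over $i$ and noting that each $(k-1)$-subset of indices is the complement of exactly $n-k+1$ removed indices yields $\sum_i \partial p_k/\partial \k_i = (n-k+1)\,p_{k-1}$; renormalizing using $(n-k+1)\binom{n}{k-1}=k\binom{n}{k}$ gives $\sum_i \dot E_k^i = k E_{k-1}$. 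For \eqref{newton-formula-3}, the bookkeeping identity
\begin{align*}
p_1(\k)\,p_k(\k) \;=\; (k+1)\,p_{k+1}(\k) \;+\; \sum_i \k_i^2\, p_{k-1}(\k|i)
\end{align*}
is obtained by classifying, in the expansion of $p_1 p_k$, whether the index $j$ picked up from the $p_1$ factor lies outside the chosen $k$-subset (contributing to $(k+1)p_{k+1}$) or inside it (contributing a $\k_j^2$ factor). Solving for $\sum_i \k_i^2\,p_{k-1}(\k|i)$ and renormalizing via $p_k=\binom{n}{k}E_k$ and $\binom{n}{k+1}/\binom{n}{k}=(n-k)/(k+1)$ produces $\sum_i \k_i^2 \dot E_k^i = n E_1 E_k - (n-k)E_{k+1}$.

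There is no genuine obstacle here; the lemma is a standard computational fact and the only mild care needed is the combinatorial bookkeeping in the last step, which can equivalently be derived by differentiating Euler's identity $\sum_i \k_i \dot E_k^i = k E_k$ or by using the generating-function recurrence $p_1 p_k - (k+1)p_{k+1} = \sum_i \k_i^2 \partial p_k/\partial\k_i$ directly.
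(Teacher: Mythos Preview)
Your argument is correct in every detail: diagonalizing reduces the matrix identities to the eigenvalue identities, Euler's identity handles \eqref{newton-formula-1}, the counting argument for $\sum_i p_{k-1}(\k|i)=(n-k+1)p_{k-1}$ together with $(n-k+1)\binom{n}{k-1}=k\binom{n}{k}$ gives \eqref{newton-formula-2}, and the decomposition $p_1 p_k=(k+1)p_{k+1}+\sum_i \k_i^2 p_{k-1}(\k|i)$ renormalizes to \eqref{newton-formula-3} exactly as you indicate.

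As for comparison: the paper states this lemma without proof, treating it as a standard algebraic fact about the normalized elementary symmetric functions (and indeed it is classical; see for instance the references cited there around the Newton--MacLaurin inequalities). So there is no approach to compare against; your write-up supplies a clean self-contained verification where the paper simply invokes the result.
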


\subsection{Hypersurfaces in hyperbolic space}$\ $ \label{s2-2}
The hyperbolic space $\mathbb H^{n+1}$ can be expressed as a warped product manifold $\mathbb R^+\times \mathbb S^n$ equipped with the metric
$$
\-g=dr^2+\l(r)^2 \s,
$$
where $\l(r)=\sinh r$ and $\s$ is the round metric of the unit sphere $\mathbb S^n \subset \mathbb R^{n+1}$. We define 
$$
\Lambda(r)=\int_0^{r} \l(s)ds=\l'(r)-1,
$$
where $\l'(r)=\cosh r$. Let $\-\nabla$ be the Levi-Civita connection with respect to $\-g$. The vector field $\-\nabla \Lambda=\l\partial_r$ on $\mathbb H^{n+1}$ is a conformal Killing field, i.e.,
\begin{align}\label{s2:conformal-Killing}
\-\nabla(\l \partial_r)=\l' \-g.
\end{align}

Let $M$ be a closed smooth hypersurface in $\mathbb H^{n+1}$ with unit outward normal $\nu$. The second fundamental form $h$ of $M$ is given by $h(X,Y)=\langle \-\nabla \nu, Y\rangle$ for any tangent vectors $X,Y$ on $M$. The principal curvatures $\k=(\k_1,\cdots,\k_n)$ are the eigenvalues of the second fundamental form $h$ with respect to the induced metric $g$ on $M$. In a local coordinate $(x^1,\cdots,x^n)$ of $M$, we denote $g_{ij}=g(\partial_i,\partial_j)$ and $h_{ij}=h(\partial_i,\partial_j)$. Then the Weingarten matrix is given by $\mathcal{W}=(h_i^j)=(g^{jk}h_{ki})$, where $(g^{ij})$ is the inverse matrix of $(g_{ij})$. Then the principal curvatures $\k$ of $M$ are the eigenvalues of the Weingarten matrix $\mathcal{W}$.

We recall the following lemmas for smooth hypersurfaces in $\mathbb H^{n+1}$, see e.g. \cite[Lemmas 2.2 \& 2.6]{GL15}.
\begin{lem}\label{lem-1} Let $(M,g)$ be a smooth hypersurface in $\mathbb H^{n+1}$. Then $\Lambda|_{M}$ satisfies
	\begin{align}\label{s2:2.1}
	\nabla_i\l'=\nabla_i \Lambda= \langle \l\partial_r,e_i\rangle,\quad \nabla_j\nabla_i \l'=\nabla_j\nabla_i \Lambda=\l' g_{ij} -u h_{ij},
	\end{align}
	and the support function $u=\langle \l\partial_r,\nu \rangle$ satisfies
	\begin{align}\label{s2:2.2}
	\nabla_i u=  \langle \l\partial_r,e_k\rangle h_i^k, \quad \nabla_j\nabla_i u= \langle \l\partial_r,\nabla h_{ij}\rangle+\l' h_{ij}-u (h^2)_{ij},
	\end{align}
	where $\{e_1,\cdots,e_n\}$ is a basis of the tangent space of $M$.
\end{lem}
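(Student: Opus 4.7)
The proof rests on two ambient facts about $\mathbb H^{n+1}$: the vector field $\l\partial_r = \-\nabla\Lambda$ is conformal Killing with $\-\nabla(\l\partial_r) = \l'\-g$, and since the ambient sectional curvature is constant the Codazzi equation collapses to the full symmetry $\nabla_k h_{ij} = \nabla_i h_{jk}$ of the covariant derivative of the second fundamental form.

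I would first establish the $\Lambda$-identities. The relation $\l'(r) - \Lambda(r) \equiv 1$ shows $\nabla_i\l' = \nabla_i\Lambda$, and $\nabla_i\Lambda = \langle \-\nabla\Lambda, e_i\rangle = \langle \l\partial_r, e_i\rangle$ is immediate. For the Hessian I would use the standard relation between the intrinsic Hessian on $M$ and the ambient Hessian,
$$
\nabla_i\nabla_j f \;=\; \-\nabla^2 f(e_i,e_j) \;-\; h_{ij}\,\nu(f),
$$
whose sign is fixed by the paper's convention $h(X,Y) = \langle \-\nabla_X\nu, Y\rangle$ (equivalently, the Gauss formula reads $\-\nabla_{e_i}e_j = \nabla_{e_i}e_j - h_{ij}\nu$). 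Applying this with $f=\Lambda$ and inserting $\-\nabla^2\Lambda = \l'\-g$ together with $\nu(\Lambda) = u$ yields $\nabla_i\nabla_j\Lambda = \l' g_{ij} - u h_{ij}$.

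Next I would handle the support function. Differentiating $u = \langle \l\partial_r,\nu\rangle$ with the ambient connection and using the conformal Killing identity $\-\nabla_{e_i}(\l\partial_r) = \l' e_i$ (which is perpendicular to $\nu$) together with the Weingarten equation $\-\nabla_{e_i}\nu = h_i^k e_k$ gives
$$
\nabla_i u \;=\; \langle \l\partial_r, h_i^k e_k\rangle \;=\; h_i^k\,\nabla_k\Lambda.
$$
Differentiating once more produces $\nabla_j\nabla_i u = (\nabla_j h_i^k)\nabla_k\Lambda + h_i^k\nabla_j\nabla_k\Lambda$. The second summand reduces by the $\Lambda$-Hessian identity just established to $\l' h_{ij} - u(h^2)_{ij}$. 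For the first summand, Codazzi $\nabla_j h_{ik} = \nabla_k h_{ij}$ allows us to rewrite $(\nabla_j h_i^k)\nabla_k\Lambda = \nabla^k h_{ij}\,\nabla_k\Lambda$, which is exactly $\langle \l\partial_r,\nabla h_{ij}\rangle$ in the notation of the statement. Summing the two contributions yields the claimed formula.

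There is no substantial obstacle; the entire argument is a routine unwinding of the conformal Killing property, the Gauss--Weingarten relations, and Codazzi. The only point demanding real care is sign bookkeeping, since the convention $h(X,Y) = \langle \-\nabla_X\nu, Y\rangle$ is what fixes the minus sign in front of $u h_{ij}$ in $\nabla_i\nabla_j\Lambda$ and hence also the precise sign structure of the final formula for $\nabla_j\nabla_i u$.
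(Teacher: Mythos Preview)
Your argument is correct and is exactly the standard derivation. The paper does not supply its own proof of this lemma; it merely cites \cite[Lemmas 2.2 \& 2.6]{GL15}, and your computation is the routine one behind that citation: conformal Killing identity $\-\nabla(\l\partial_r)=\l'\-g$, the Gauss--Weingarten relations with the stated sign convention, and Codazzi to symmetrize $\nabla_j h_{ik}$ into $\nabla_k h_{ij}$.
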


By the divergence-free property of $\dot{E}_k^{ij}(h)$ and \eqref{s2:2.1}, we have the well-known Minkowski formulas (see e.g. \cite[Proposition 2.5]{GL15}).
\begin{lem}\label{lem-Minkowski-formula} Let $(M,g)$ be a smooth closed hypersurface in $\mathbb H^{n+1}$. Then there holds
	\begin{align}\label{s2:Minkowski-formula}
	\int_M \l' E_{k-1} d\mu= \int_{M} u E_k d\mu,  \quad k=1,\cdots,n.
	\end{align}
\end{lem}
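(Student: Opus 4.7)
The plan is to derive the Minkowski identity as an integrated Stokes-type relation, with the integrand built from the Newton tensor paired against the gradient of $\Lambda|_M$.

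First, I introduce the Newton tensor $T_{k-1}^{ij} := \dot{E}_k^{ij}(h)$ associated to the second fundamental form. A standard computation based on the Codazzi equation $\nabla_i h_{jl} = \nabla_j h_{il}$---which holds on any hypersurface of a space form since the ambient curvature tensor produces no tangential Codazzi defect---yields the divergence-free identity $\nabla_i T_{k-1}^{ij} = 0$ for every $j$. This is the only nontrivial ingredient of the proof and, I expect, the main conceptual obstacle: it is classical for hypersurfaces in space forms but would fail in general ambient geometries, where one picks up extra Riemann tensor terms that do not cancel.

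Next, I form the tangent vector field $X^j := T_{k-1}^{ij}\, \nabla_i \Lambda$ on $M$ and compute its divergence. Combining the divergence-free property of $T_{k-1}$ with the Hessian identity $\nabla_j\nabla_i \Lambda = \l' g_{ij} - u\, h_{ij}$ from \eqref{s2:2.1},
\begin{align*}
\nabla_j X^j \;=\; T_{k-1}^{ij}\,\nabla_j\nabla_i \Lambda \;=\; \l'\, T_{k-1}^{ij} g_{ij} \;-\; u\, T_{k-1}^{ij} h_{ij} \;=\; k\bigl(\l' E_{k-1} - u\, E_k\bigr),
\end{align*}
where the last equality invokes the algebraic identities $T_{k-1}^{ij}g_{ij} = kE_{k-1}$ and $T_{k-1}^{ij} h_{ij} = kE_k$ recorded as \eqref{newton-formula-1} and \eqref{newton-formula-2} in Lemma \ref{lem-Newton-tensor}.

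Finally, since $M$ is closed, the divergence theorem gives $\int_M \nabla_j X^j\, d\mu = 0$; dividing by $k\geq 1$ produces the Minkowski identity \eqref{s2:Minkowski-formula}. No convexity, concavity, or curvature-positivity hypotheses enter the argument: the formula is a purely integration-by-parts consequence of the Codazzi equation in a space form together with the conformal Killing property \eqref{s2:conformal-Killing} of $\l\partial_r$ encoded in the Hessian of $\Lambda|_M$.
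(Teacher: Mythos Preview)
Your proof is correct and follows exactly the approach the paper indicates: the divergence-free property of $\dot{E}_k^{ij}$ combined with the Hessian identity \eqref{s2:2.1} and the trace formulas of Lemma \ref{lem-Newton-tensor}, integrated over the closed hypersurface. One minor labeling slip: the references to \eqref{newton-formula-1} and \eqref{newton-formula-2} are swapped---$T_{k-1}^{ij}g_{ij}=kE_{k-1}$ is \eqref{newton-formula-2} and $T_{k-1}^{ij}h_{ij}=kE_k$ is \eqref{newton-formula-1}.
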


\subsection{Parametrization by Radial graph}\label{s2:sec-2.2}
A smooth closed hypersurface $M$ in hyperbolic space $\mathbb H^{n+1}$ is called {\em star-shaped} if its support function $u= \langle \l\partial_r,\nu\rangle>0$ everywhere on $M$. This is equivalent to that the hypersurface $M$ can be expressed as a radial graph in spherical coordinates $(r(\t),\t)$ in $\mathbb H^{n+1}$, that is,
\begin{align*}
M= \{(r(\t),\t)\in \mathbb R^{+} \times \mathbb S^n ~|~\t \in \mathbb S^n \}.
\end{align*}
Let $\t=(\t^1,\cdots,\t^n)$ be a local coordinate of the round sphere $(\mathbb S^n,\s)$. Let $D$ be the Levi-Civita connection on $(\mathbb S^n,\s)$. Let $\partial_i=\partial_{\t^i}$ and $r_i=D_i r$. For the convenience of the notations, we define $\varphi:\mathbb S^n \ra \mathbb R$ by $\varphi(\t)=\Psi(r(\t))$, where $\Psi=\Psi(r)$ is a positive smooth function satisfies $\Psi'(r)=1/\l(r)$. Let $\varphi_i=D_i \varphi$ and $\varphi_{ij}=D_iD_j\varphi$. Then the tangential vector on $M$ are given by $\{X_i= \partial_i +r_i \partial_r=\partial_i+\l \varphi_i\partial_r, i=1,\cdots,n\}$.
The induced metric $g_{ij}$ of $M=\operatorname{graph}r$ can be expressed as
\begin{align}\label{s2:induced-metric}
g_{ij} = \l^2 \s_{ij}+r_i r_i=\l^2 (\s_{ij}+\varphi_i\varphi_j),
\end{align}
where $\s_{ij}=\s(\partial_i,\partial_j)$. Denote by $v=\sqrt{1+\l^{-2}|Dr|^2}=\sqrt{1+|D\varphi|^2}$, the inverse matrix $(g^{ij})$ of $(g_{ij})$ is given by
\begin{align}\label{s2:inverse-metric}
g^{ij}=\frac{1}{\l^2}\(\s^{ij}-\frac{\varphi^i \varphi^j}{v^2}\),
\end{align}
where $\varphi^i=\s^{ij}\varphi_j$. The unit outward normal is given by
\begin{align*}
\nu=\frac{1}{v}\(\partial_r - \frac{\varphi^i}{\l}\partial_i\).
\end{align*}
It follows that the support function $u=\langle \l\partial_r,\nu\rangle=\frac{\l}{v}$. The second fundamental form $h_{ij}$ and the Weingarten matrix $h_i^j$ of $M$ can be expressed as (see e.g., \cite{Ge11})
\begin{align*}
h_{ij}= \frac{\l'}{\l v}g_{ij}-\frac{\l}{v}\varphi_{ij},
\end{align*}
and 
\begin{align}\label{s2:2nd-fundamental-form}
h_i^j=g^{jk}h_{ki}= \frac{\l'}{\l v}\d_i^j-\frac{1}{\l v}\(\s^{jk}-\frac{\varphi^j\varphi^k}{v^2}\)\varphi_{ki}. 
\end{align}

\section{Evolution equations}$\ $ \label{sec:3} 
Along the general flow 
\begin{align}\label{s3:evol-general-flow}
\frac{\partial}{\partial t}X=\mathcal{F}\nu,
\end{align}
in hyperbolic space $\mathbb H^{n+1}$, we have the following evolution equations for the induced metric $g_{ij}$, the unit outward normal $\nu$, the area element $d\mu_t$, the second fundamental form $h_{ij}$ and the Weingarten matrix $h_i^j$ of the flow hypersurfaces $M_t=X(M^n,t)$: (see e.g., \cite{LWX14})
	\begin{align}
	\frac{\partial}{\partial t}g_{ij}=&2\mathcal{F}h_{ij}, \label{s3:3.1} \\
	\frac{\partial}{\partial t}\nu=&-\nabla \mathcal{F},  \label{s3:3.2} \\
	\frac{d}{dt}d\mu_t=& nE_1 \mathcal{F} d\mu_t, \label{s3:3.3} \\
	\frac{\partial}{\partial t}h_{ij}=&-\nabla_j\nabla_i \mathcal{F}+\mathcal{F}((h^2)_{ij}+g_{ij}), \label{s3:3.4}\\
	\frac{\partial}{\partial t}h_i^j=&-\nabla^j\nabla_i\mathcal{F}-\mathcal{F}((h^2)_i^j-\d_i^j), \label{s3:3.5}
	\end{align}
where $\nabla$ denotes the Levi-Civita connection with respect to the induced metric $g_{ij}$ on $M_t$. 

First of all, we deduce the variational formulas for the weighted curvature integrals along the general flow \eqref{s3:evol-general-flow} in hyperbolic space.
\begin{prop}\label{s2:evol-weighted-curvature-integral}
	Let $M_t=\partial\Omega_t \subset \mathbb H^{n+1}$ be a smooth family of closed hypersurfaces evolves by the flow \eqref{s3:evol-general-flow}. Then there holds
	\begin{align}\label{s6:evol-weighted-curvature-integral}
	\frac{d}{dt}W_{k}^{\l'}(\Omega_t)=\int_{M_t} \( k u E_{k-1}+(n+1-k)\l'E_k\)\mathcal{F}d\mu_t, \quad 0\leq k\leq n+1.
	\end{align}
\end{prop}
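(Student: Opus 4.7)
The plan is to treat the boundary values $k=0$ and $k=n+1$ separately and to handle the interior range $1\le k\le n$ by a single direct calculation, with the convention $E_{-1}=E_{n+1}=0$ making all cases uniform. For $k=0$, since $W_0^{\l'}(\Omega)=(n+1)\int_\Omega \l'\,d\mrm{vol}$, the identity reduces to the standard first variation of the weighted enclosed volume under outward normal speed $\mathcal{F}$, namely $\frac{d}{dt}\int_{\Omega_t}\l'\,d\mrm{vol}=\int_{M_t}\l'\mathcal{F}\,d\mu_t$, which matches the right-hand side (the $uE_{-1}$ term is absent and $E_0=1$).

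For $1\le k\le n+1$, I would start from $W_k^{\l'}(\Omega_t)=\int_{M_t}\l' E_{k-1}\,d\mu_t$ and differentiate under the integral sign using three ingredients: (i) $\partial_t\l'=u\mathcal{F}$, which follows from $\-\nabla\l'=\l\partial_r$ in $\mathbb H^{n+1}$ together with the definition of the flow; (ii) the evolution \eqref{s3:3.5} of the Weingarten matrix, giving $\partial_t E_{k-1}=-\dot{E}_{k-1}^{ij}\nabla_i\nabla_j\mathcal{F}-\mathcal{F}\bigl(\dot{E}_{k-1}^{ij}(h^2)_{ij}-\dot{E}_{k-1}^{ij}\delta_{ij}\bigr)$; and (iii) the area-element evolution \eqref{s3:3.3}. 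Collecting these contributions yields
\begin{align*}
\frac{d}{dt}W_k^{\l'}(\Omega_t)=\int_{M_t}\Bigl[u E_{k-1}\mathcal{F}-\l'\dot{E}_{k-1}^{ij}\nabla_i\nabla_j\mathcal{F}-\l'\mathcal{F}\bigl(\dot{E}_{k-1}^{ij}(h^2)_{ij}-\dot{E}_{k-1}^{ij}\delta_{ij}\bigr)+nE_1 E_{k-1}\l'\mathcal{F}\Bigr]d\mu_t.
\end{align*}

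The key step is the treatment of $\int_{M_t}\l'\dot{E}_{k-1}^{ij}\nabla_i\nabla_j\mathcal{F}\,d\mu_t$. Because $\mathbb H^{n+1}$ has constant sectional curvature, the Codazzi equation forces the Newton tensor $\dot{E}_{k-1}^{ij}$ to be divergence-free, so two integrations by parts convert this expression into $\int_{M_t}\mathcal{F}\dot{E}_{k-1}^{ij}\nabla_i\nabla_j\l'\,d\mu_t$. Substituting \eqref{s2:2.1} in the form $\nabla_i\nabla_j\l'=\l' g_{ij}-u h_{ij}$ and invoking \eqref{newton-formula-1}--\eqref{newton-formula-3} with $k\mapsto k-1$, so that $\dot{E}_{k-1}^{ij}g_{ij}=(k-1)E_{k-2}$, $\dot{E}_{k-1}^{ij}h_{ij}=(k-1)E_{k-1}$, and $\dot{E}_{k-1}^{ij}(h^2)_{ij}=nE_1 E_{k-1}-(n-k+1)E_k$, one finds that the $nE_1 E_{k-1}\l'\mathcal{F}$ contributions cancel and the $(k-1)E_{k-2}\l'\mathcal{F}$ contributions cancel, leaving precisely $[k u E_{k-1}+(n+1-k)\l' E_k]\mathcal{F}$. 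The case $k=n+1$ is absorbed once we observe $E_{n+1}=0$, which makes the $(n-k+1)E_k$ factor in the Newton identity vanish at $k-1=n$ and the $(n+1-k)\l' E_k$ term disappear from the final formula.

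The main obstacle is purely algebraic bookkeeping: carrying the sign through the double integration by parts and verifying that the Newton--MacLaurin contractions combine as claimed. No new geometric input is required beyond Lemma \ref{lem-1}, Lemma \ref{lem-Newton-tensor}, and the evolution equations \eqref{s3:3.1}--\eqref{s3:3.5}.
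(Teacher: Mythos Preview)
Your proposal is correct and follows essentially the same route as the paper: differentiate $\int_{M_t}\l'E_{k-1}\,d\mu_t$ using $\partial_t\l'=u\mathcal{F}$, the evolution \eqref{s3:3.5} and \eqref{s3:3.3}, then shift the Hessian of $\mathcal{F}$ onto $\l'$ via the divergence-free Newton tensor and substitute \eqref{s2:2.1}, after which the algebraic identities of Lemma~\ref{lem-Newton-tensor} produce the stated formula. The only cosmetic difference is that the paper cancels the $\dot{E}_{k-1}^{ij}g_{ij}$ contributions before invoking \eqref{newton-formula-2}, whereas you convert them to $(k-1)E_{k-2}$ first and then cancel; the computations are otherwise identical.
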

\begin{proof}
	For $k=0$, it is well-known. For $1\leq k\leq n+1$, a proof can be found in \cite[\S 5.2]{Hu-Li-Wei2020}. We include it here for
	convenience. Along the flow \eqref{s3:evol-general-flow}, we have
	\begin{align*}
	\frac{\partial}{\partial t}\l' = \langle \-\nabla \l', \partial_t\rangle =u\mathcal{F},
	\end{align*}
	and
	\begin{align*}
	\frac{\partial}{\partial t}E_{k-1} =\frac{\partial E_{k-1}}{\partial h_i^j} \partial_t h_i^j =\dot{E}_{k-1}^{ij}\(-\nabla_i\nabla_j \mathcal{F}-\mathcal{F}((h^2)_{ij}-g_{ij})\),
	\end{align*}
	where we used \eqref{s3:3.5}. Combining these with \eqref{s3:3.3}, we get
	\begin{align*}
	\frac{d}{dt}W^{\l'}_k(\Omega_t) =&\int_{M_t} (u E_{k-1}+n\l' E_{k-1} E_1) \mathcal{F}d\mu_t+\int_{M_t} \l'\dot{E}_{k-1}^{ij}\(-\nabla_i\nabla_j \mathcal{F}-\mathcal{F}((h^2)_{ij}-g_{ij})\)d\mu_t.
	\end{align*}
	Since $\dot{E}_k^{ij}$ is divergence-free, by integration by parts we obtain
	\begin{align*}
	\frac{d}{dt}W^{\l'}_k(\Omega_t)=&\int_{M_t} \(u E_{k-1}+n\l' E_{k-1}E_1-\l'\dot{E}_{k-1}^{ij}((h^2)_{ij}-g_{ij})\)\mathcal{F}d\mu_t -\int_{M_t}(\dot{E}_{k-1}^{ij}\nabla_i\nabla_j \l')\mathcal{F}d\mu_t \\
	=&\int_{M_t} \(u E_{k-1}+n\l' E_{k-1}E_1-\l'\dot{E}_{k-1}^{ij}(h^2)_{ij}+u\dot{E}_{k-1}^{ij}h_{ij}\)\mathcal{F}d\mu_t \\
	=&\int_{M_t} \(ku E_{k-1}+(n+1-k)\l'E_k\) \mathcal{F}d\mu_t,
	\end{align*}
	where we used \eqref{s2:2.1} in the second equality and \eqref{newton-formula-1}, \eqref{newton-formula-3} in the last equality.	
\end{proof}

Let $\Phi \in C^\infty(\mathbb R_{+})$ be a smooth function satisfying $\Phi'(s)=\frac{d}{ds}\Phi(s)>0$ for all $s>0$. We assume that $F$ satisfies (i) in Assumption \ref{s1:Assumption}. We consider the following two kinds of flows 
\begin{align}\label{s2:general-form-flow-I}
\frac{\partial}{\partial t}X= \(\Phi(1)-\Phi(\frac{u}{\l'}F)\)\nu,
\end{align}
and
\begin{align}\label{s2:general-form-flow-II}
\frac{\partial}{\partial t}X= \( \Phi(\frac{\l'}{u})-\Phi(F)\)\nu.
\end{align}
It is easy to see that 
\begin{enumerate}[(i)]
	\item The flow \eqref{s1:locally-MCF} is corresponding to the flow \eqref{s2:general-form-flow-I} with $\Phi(s)=s$;
	\item The flow \eqref{s1:SX-ICF} is corresponding to the flow \eqref{s2:general-form-flow-II} with $\Phi(s)=-s^{-1}$.
\end{enumerate}
Now we deduce the evolution equations along the flows \eqref{s2:general-form-flow-I} and \eqref{s2:general-form-flow-II}, respectively. For simplicity, we denote by 
\begin{align}\label{s2:notation-I}
\dot{\Phi}^{kl}(\frac{u}{\l'}F)=\Phi'(\frac{u}{\l'}F)\frac{u}{\l'}\dot{F}^{kl},\quad \ddot{\Phi}^{kl,pq}(\frac{u}{\l'}F)=\Phi'(\frac{u}{\l'}F)\frac{u}{\l'}\ddot{F}^{kl,pq}+\Phi''(\frac{u}{\l'}F)\frac{u^2}{\l'^2}\dot{F}^{kl}\dot{F}^{pq},
\end{align}
and
\begin{align}\label{s2:notation-II}
\dot{\Phi}^{kl}(F)=\Phi'(F)\dot{F}^{kl},\quad \ddot{\Phi}^{kl,pq}(F)=\Phi'(F)\ddot{F}^{kl,pq}+\Phi''(F)\dot{F}^{kl}\dot{F}^{pq}.
\end{align}
\begin{lem}\label{s3:lem-evolution-eq-1} Along the flow \eqref{s2:general-form-flow-I}, i.e., $\mathcal{F}=\Phi(1)-\Phi(\frac{u}{\l'}F)$, we have the following evolution equations.
	\begin{enumerate}[(i)]
		\item 
		\begin{align}\label{evol-static-function-I}
		\frac{\partial}{\partial t}(\frac{u}{\l'})=&\dot{\Phi}^{kl}(\frac{u}{\l'}F) \nabla_k \nabla_l (\frac{u}{\l'})+\frac{2}{\l'}\dot{\Phi}^{kl}(\frac{u}{\l'}F)\nabla_k(\frac{u}{\l'})\nabla_l \l'+\Phi'(\frac{u}{\l'}F)\frac{F}{\l'}\langle \l\partial_r,\nabla (\frac{u}{\l'})\rangle \nonumber\\
		&+(1-\frac{u^2}{\l'^2})(\Phi(1)-\Phi(\frac{u}{\l'}F))-(1+\frac{u^2}{\l'^2})\Phi'(\frac{u}{\l'}F)\frac{u}{\l'}F \nonumber\\
		&+\Phi'(\frac{u}{\l'}F)\frac{u^2}{\l'^2}\dot{F}^{kl}((h^2)_{kl}+g_{kl}).
		\end{align}
		\item 
		\begin{align}\label{evol-second-fundamental-form-I}
		\frac{\partial}{\partial t}h_{ij}
		=&\dot{\Phi}^{kl}(\frac{u}{\l'}F)\nabla_k \nabla_l h_{ij}+\ddot{\Phi}^{kl,pq}(\frac{u}{\l'}F)\nabla_i h_{kl}\nabla_j h_{pq} +\Phi'(\frac{u}{\l'}F)\frac{F}{\l'}\langle \l\partial_r,\nabla h_{ij}\rangle\nonumber\\
		& -2\Phi'(\frac{u}{\l'}F)\(\frac{F}{\l'}\nabla_{(i}\l'\nabla_{j)}(\frac{u}{\l'})-\nabla_{(i}(\frac{u}{\l'})\nabla_{j)}F\) \nonumber\\
		&+\Phi''(\frac{u}{\l'}F)\( F^2\nabla_i(\frac{u}{\l'})\nabla_j(\frac{u}{\l'})+2\frac{u}{\l'}F\nabla_{(i}(\frac{u}{\l'})\nabla_{j)}F\) \nonumber\\
		&+\Phi'(\frac{u}{\l'}F)\(\frac{u}{\l'}\dot{F}^{kl}((h^2)_{kl}+g_{kl})+F(1+\frac{u^2}{\l'^2}) \)h_{ij} \nonumber\\
		&+\(\Phi(1)-\Phi(\frac{u}{\l'}F)-2\Phi'(\frac{u}{\l'}F)\frac{u}{\l'}F\)((h^2)_{ij}+g_{ij}),
		\end{align}
		where the brackets denote symmetrization, e.g., $\nabla_{(i}\l'\nabla_{j)}(\frac{u}{\l'})=\frac{1}{2}(\nabla_{i}\l'\nabla_{j}(\frac{u}{\l'})+\nabla_{j}\l'\nabla_{i}(\frac{u}{\l'}))$.
		\item Taking $S_{ij}=h_{ij}-\frac{u}{\l'}g_{ij}$, we have
		\begin{align}\label{evol-Sij-I}
		\frac{\partial}{\partial t}S_{ij}=&\dot{\Phi}^{kl}(\frac{u}{\l'}F)\nabla_k \nabla_l S_{ij}+ \ddot{\Phi}^{kl,pq}(\frac{u}{\l'}F)\nabla_i h_{kl}\nabla_j h_{pq} +\Phi(\frac{u}{\l'}F)\frac{F}{\l'}\langle \l\partial_r,\nabla S_{ij}\rangle \nonumber\\
		&-2\Phi'(\frac{u}{\l'}F)\(\frac{F}{\l'}\nabla_{(i}\l'\nabla_{j)}(\frac{u}{\l'})-\nabla_{(i}(\frac{u}{\l'})\nabla_{j)}F\) \nonumber \\ &+\Phi''(\frac{u}{\l'}F)\( F^2\nabla_i(\frac{u}{\l'})\nabla_j(\frac{u}{\l'})+2\frac{u}{\l'}F\nabla_{(i}(\frac{u}{\l'})\nabla_{j)}F\) \nonumber\\
		&-\dot{\Phi}^{kl}(\frac{u}{\l'}F)\frac{2}{\l'}\nabla_k(\frac{u}{\l'})\nabla_l\l' g_{ij}+\(\Phi(1)-\Phi(\frac{u}{\l'}F)-2\Phi'(\frac{u}{\l'}F)\frac{u}{\l'}F\)((S^2)_{ij}+2\frac{u}{\l'}S_{ij})\nonumber\\
		&+\(\Phi'(\frac{u}{\l'}F)\(\frac{u}{\l'}\dot{F}^{kl}((h^2)_{kl}+g_{kl})+F(1+\frac{u^2}{\l'^2})\)-2\frac{u}{\l'}(\Phi(1)-\Phi(\frac{u}{\l'}F))\)S_{ij}.
		\end{align}
	\end{enumerate}
\end{lem}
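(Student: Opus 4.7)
\medskip

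My plan is to derive all three evolution equations from the general flow identities \eqref{s3:3.1}--\eqref{s3:3.5} combined with the hyperbolic geometric identities \eqref{s2:2.1} and \eqref{s2:2.2}. The speed is $\mathcal{F}=\Phi(1)-\Phi(\tfrac{u}{\lambda'}F)$, so its gradient and Hessian, which feed into \eqref{s3:3.4}, are computed by the chain rule, and every occurrence of $\nabla F$, $\nabla\nabla F$ is expanded using $\nabla_i F=\dot F^{kl}\nabla_i h_{kl}$ and $\nabla_i\nabla_j F=\dot F^{kl}\nabla_i\nabla_j h_{kl}+\ddot F^{kl,pq}\nabla_i h_{kl}\nabla_j h_{pq}$. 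The notations \eqref{s2:notation-I} are designed precisely so that these expansions pack into $\dot\Phi^{kl}$ and $\ddot\Phi^{kl,pq}$.

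For part (i), the first step is to compute $\partial_t(u/\lambda')$ from first principles: using \eqref{s3:3.1}--\eqref{s3:3.2} and $\bar\nabla(\lambda\partial_r)=\lambda'\bar g$ one gets $\partial_t\lambda'=u\mathcal{F}$ and $\partial_t u=\lambda'\mathcal{F}-\langle\lambda\partial_r,\nabla\mathcal{F}\rangle$, whence
\[
\tfrac{\partial}{\partial t}\tfrac{u}{\lambda'}=\bigl(1-\tfrac{u^2}{\lambda'^2}\bigr)\mathcal{F}-\tfrac{1}{\lambda'}\langle\lambda\partial_r,\nabla\mathcal{F}\rangle.
\]
Then I compute $\nabla_k\nabla_l(u/\lambda')$ by the quotient rule from \eqref{s2:2.1}--\eqref{s2:2.2}, which produces the bulk terms $h_{ij}/\lambda'$, $u(h^2)_{ij}/\lambda'$, $g_{ij}u/\lambda'$ plus lower order gradient pieces. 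Contracting with $\dot\Phi^{kl}(\tfrac{u}{\lambda'}F)$ and comparing with the above expression for $\partial_t(u/\lambda')$, the terms involving $\dot F^{kl}(h^2)_{kl}$, $\dot F^{kl}g_{kl}$, and the first‑order $\nabla(u/\lambda')\nabla\lambda'$ and $\langle\lambda\partial_r,\nabla(u/\lambda')\rangle$ corrections fall out in exactly the form claimed in \eqref{evol-static-function-I}.

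For part (ii) I start from \eqref{s3:3.4}, so the task reduces to computing $\nabla_i\nabla_j\mathcal{F}$. Writing $\mathcal{F}=\Phi(1)-\Phi(\tfrac{u}{\lambda'}F)$ and applying the chain rule splits $\nabla_i\nabla_j\mathcal{F}$ into (a) a $\Phi''$ term quadratic in $\nabla(\tfrac{u}{\lambda'}F)$, giving the $F^2\nabla(u/\lambda')^2$ and cross terms seen on the right of \eqref{evol-second-fundamental-form-I}, (b) a $\Phi'\cdot\ddot F$ term yielding the $\ddot\Phi^{kl,pq}\nabla_i h_{kl}\nabla_j h_{pq}$ piece, and (c) a $\Phi'\cdot\dot F^{kl}\nabla_i\nabla_j h_{kl}$ piece together with $\Phi'\cdot\dot F^{kl}\cdot(\text{Hessians of }u,\lambda')$. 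The key algebraic step is to convert $\dot F^{kl}\nabla_i\nabla_j h_{kl}$ into $\dot F^{kl}\nabla_k\nabla_l h_{ij}$ using the Simons identity in the space form $\mathbb{H}^{n+1}$,
\[
\nabla_k\nabla_l h_{ij}=\nabla_i\nabla_j h_{kl}+(h^2)_{ij}h_{kl}-(h^2)_{kl}h_{ij}-(g_{ij}h_{kl}-g_{kl}h_{ij}),
\]
so that the $\Phi'(\tfrac{u}{\lambda'}F)\tfrac{u}{\lambda'}\dot F^{kl}\nabla_k\nabla_l h_{ij}=\dot\Phi^{kl}\nabla_k\nabla_l h_{ij}$ heat operator emerges; the curvature correction combines with the $\mathcal{F}((h^2)_{ij}+g_{ij})$ term from \eqref{s3:3.4} and with the Hessians of $u,\lambda'$ from \eqref{s2:2.1}--\eqref{s2:2.2} to produce the $(h^2)_{ij}$ and $g_{ij}$ coefficients in \eqref{evol-second-fundamental-form-I}. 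The $\Phi'\nabla_{(i}\lambda'\nabla_{j)}(u/\lambda')$ and $\Phi'\nabla_{(i}(u/\lambda')\nabla_{j)}F$ cross terms come from regrouping $\nabla(\tfrac{u}{\lambda'}F)=F\nabla(u/\lambda')+(u/\lambda')\nabla F$ and using $\nabla(u/\lambda')=\nabla u/\lambda'-(u/\lambda'^2)\nabla\lambda'$.

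For part (iii) I differentiate $S_{ij}=h_{ij}-(u/\lambda')g_{ij}$ in $t$, substituting \eqref{evol-second-fundamental-form-I}, \eqref{evol-static-function-I}, and \eqref{s3:3.1}. The heat operators $\dot\Phi^{kl}\nabla_k\nabla_l$ acting on $h_{ij}$ and on $(u/\lambda')g_{ij}$ combine to give $\dot\Phi^{kl}\nabla_k\nabla_l S_{ij}$, since $\nabla g=0$; this creates the extra $-\tfrac{2}{\lambda'}\dot\Phi^{kl}\nabla_k(u/\lambda')\nabla_l\lambda'\,g_{ij}$ stray term coming from the $\tfrac{2}{\lambda'}\dot\Phi^{kl}\nabla_k(u/\lambda')\nabla_l\lambda'$ in part (i). The $((h^2)_{ij}+g_{ij})$ coefficient is rewritten as $(S^2)_{ij}+2(u/\lambda')S_{ij}+(u/\lambda')^2 g_{ij}+g_{ij}$ by expanding $h=S+(u/\lambda')g$, and the purely $g_{ij}$ pieces are absorbed against the corresponding terms produced by subtracting $\partial_t(u/\lambda')\,g_{ij}+2(u/\lambda')\mathcal{F}h_{ij}$. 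Carefully combining coefficients of $S_{ij}$, $(S^2)_{ij}$, and $g_{ij}$ gives \eqref{evol-Sij-I}.

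The main obstacle is the bookkeeping in part (ii): there are many quadratic‑in‑gradient terms mixing $\nabla u$, $\nabla\lambda'$, $\nabla F$, and one has to repeatedly use $\nabla(u/\lambda')=\nabla u/\lambda'-u\nabla\lambda'/\lambda'^2$ together with the Hessians \eqref{s2:2.1}--\eqref{s2:2.2} to see that everything packages exactly into the compact form of \eqref{evol-second-fundamental-form-I}. Part (iii) is then just disciplined algebra on top of (i) and (ii), with the only subtlety being the cancellation of stray $g_{ij}$‑multiples so that the coefficient of $S_{ij}$ comes out as written.
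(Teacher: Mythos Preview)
Your approach is essentially identical to the paper's: compute $\partial_t\lambda'$ and $\partial_t u$ from the conformal Killing property and \eqref{s3:3.2}, combine via the quotient rule and the Hessian identity for $u/\lambda'$ to get (i); start from \eqref{s3:3.4}, apply the chain rule and the Simons identity to obtain (ii); then subtract to get (iii). One correction: your Simons identity has the wrong sign on the ambient curvature terms for $\mathbb{H}^{n+1}$; it should read
\[
\nabla_{(k}\nabla_{l)} h_{ij}=\nabla_{(i}\nabla_{j)} h_{kl}+(h^2)_{ij}h_{kl}-(h^2)_{kl}h_{ij}+\bigl(g_{ij}h_{kl}-g_{kl}h_{ij}\bigr),
\]
so that after contracting with $\dot F^{kl}$ one obtains $\dot F^{kl}\nabla_k\nabla_l h_{ij}+\dot F^{kl}\bigl((h^2)_{kl}+g_{kl}\bigr)h_{ij}-F\bigl((h^2)_{ij}+g_{ij}\bigr)$, matching the $+g_{ij}$ in \eqref{s3:3.4} and the coefficients in \eqref{evol-second-fundamental-form-I}.
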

\begin{proof}
	\begin{enumerate}[(i)]
		\item Since $u=\langle \l\partial_r,\nu\rangle$ and $\-\nabla\l'=\-\nabla \L=\l\partial_r$, we get
		\begin{align*}
		\frac{\partial}{\partial t}\l'=\langle \-\nabla \l', \partial_t X\rangle=\langle \l\partial_r,(\Phi(1)-\Phi(\frac{u}{\l'}F)\nu\rangle = u\(\Phi(1)-\Phi(\frac{u}{\l'}F)\).
		\end{align*}
		By \eqref{s2:2.1} and $1$-homogeneity of $F$, we have
		\begin{align*}
		\dot{\Phi}^{kl}(\frac{u}{\l'}F)\nabla_k \nabla_l \l'=\Phi'(\frac{u}{\l'}F)\frac{u}{\l'}\dot{F}^{kl}(\l'g_{kl}-uh_{kl})
		                                                    =\Phi'(\frac{u}{\l'}F)u\(\dot{F}^{kl}g_{kl}-\frac{u}{\l'}F\).
		\end{align*}
		Then we deduce that
		\begin{align}\label{evol-cosh-r}
		\frac{\partial}{\partial t}\l'=\dot{\Phi}^{kl}\nabla_k \nabla_l \l'+u\(\Phi(1)-\Phi(\frac{u}{\l'}F)-\Phi'(\frac{u}{\l'}F)\dot{F}^{kl}g_{kl}+\Phi'(\frac{u}{\l'}F)\frac{u}{\l'}F\).
		\end{align}
		On the other hand, by the conformal property \eqref{s2:conformal-Killing} of the vector field $\l\partial_r$ and \eqref{s3:3.2}, we have
		\begin{align*}
		\frac{\partial}{\partial t}u=&\langle \-\nabla_{\partial_t}(\l\partial_r),\nu\rangle+\langle \l\partial_r, \frac{\partial}{\partial t}\nu\rangle \\
		=&\l'\(\Phi(1)-\Phi(\frac{u}{\l'}F)\)+\langle \l\partial_r, \nabla \Phi(\frac{u}{\l'}F) \rangle \\
		=&\l'\(\Phi(1)-\Phi(\frac{u}{\l'}F)\)+\Phi'(\frac{u}{\l'}F) \(\frac{u}{\l'}\langle \l\partial_r, \nabla F\rangle+F\langle \l\partial_r, \nabla(\frac{u}{\l'})\rangle\).
		\end{align*}
		By \eqref{s2:2.2} and $1$-homogeneity of $F$, we also have
		\begin{align*}
		\dot{\Phi}^{kl}(\frac{u}{\l'}F)\nabla_k \nabla_l u=&\Phi'(\frac{u}{\l'}F)\frac{u}{\l'}\dot{F}^{kl}\(\langle \l\partial_r,\nabla h_{kl}\rangle+\l' h_{kl}-u(h^2)_{kl}\) \\
		=&\Phi'(\frac{u}{\l'}F)\frac{u}{\l'}\(\langle \l\partial_r,\nabla F\rangle+\l' F-u\dot{F}^{kl}(h^2)_{kl}\).
		\end{align*}
		Then we get
		\begin{align}\label{evol-support-function}
		\frac{\partial}{\partial t}u=&\dot{\Phi}^{kl}(\frac{u}{\l'}F)\nabla_k \nabla_l u+\Phi'(\frac{u}{\l'}F)F\langle \l\partial_r, \nabla(\frac{u}{\l'})\rangle \nonumber\\
		                             &+\l'\(\Phi(1)-\Phi(\frac{u}{\l'}F)-\Phi'(\frac{u}{\l'}F)\frac{u}{\l'}F+\Phi'(\frac{u}{\l'}F)\frac{u^2}{\l'^2}\dot{F}^{kl}(h^2)_{kl}\).
		\end{align}
		Combining \eqref{evol-cosh-r} with \eqref{evol-support-function}, we obtain
		\begin{align*}
		\frac{\partial}{\partial t}(\frac{u}{\l'})=&\frac{1}{\l'}\frac{\partial}{\partial t}u-\frac{u}{\l'^2}\frac{\partial}{\partial t}\l' \\
		=&\dot{\Phi}^{kl}(\frac{u}{\l'}F)\(\frac{1}{\l'}\nabla_k \nabla_l u-\frac{u}{\l'^2}\nabla_k \nabla_l \l'\)+\Phi'(\frac{u}{\l'}F)\frac{F}{\l'}\langle \l\partial_r,\nabla (\frac{u}{\l'})\rangle\\
		 &+(1-\frac{u^2}{\l'^2})(\Phi(1)-\Phi(\frac{u}{\l'}F))-(1+\frac{u^2}{\l'^2})\Phi'(\frac{u}{\l'}F)\frac{u}{\l'}F\\
		 &+\Phi'(\frac{u}{\l'}F)\frac{u^2}{\l'^2}\dot{F}^{kl}((h^2)_{kl}+g_{kl}).
		\end{align*}
		We also have
		\begin{align}\label{hessian-static-function}
		\nabla_k \nabla_l (\frac{u}{\l'})=&\frac{1}{\l'}\nabla_k \nabla_l u-\frac{u}{\l'^2}\nabla_k \nabla_l \l'-\frac{2}{\l'}\nabla_{(k}(\frac{u}{\l'})\nabla_{l)} \l'.
		\end{align}
		Thus, we get
		\begin{align*}
		\frac{\partial}{\partial t}(\frac{u}{\l'})=&\dot{\Phi}^{kl}(\frac{u}{\l'}F) \nabla_k \nabla_l (\frac{u}{\l'})+\dot{\Phi}^{kl}(\frac{u}{\l'}F)\frac{2}{\l'}\nabla_k(\frac{u}{\l'})\nabla_l \l'+\Phi'(\frac{u}{\l'}F)\frac{F}{\l'}\langle \l\partial_r,\nabla (\frac{u}{\l'})\rangle\\
		 &+(1-\frac{u^2}{\l'^2})(\Phi(1)-\Phi(\frac{u}{\l'}F))-(1+\frac{u^2}{\l'^2})\Phi'(\frac{u}{\l'}F)\frac{u}{\l'}F\\
		&+\Phi'(\frac{u}{\l'}F)\frac{u^2}{\l'^2}\dot{F}^{kl}((h^2)_{kl}+g_{kl}).
		\end{align*}
		
		\item By \eqref{s3:3.4}, we have
		\begin{align}\label{s3:evol-hij-1}
		\frac{\partial}{\partial t}h_{ij}=&-\nabla_j\nabla_i (\Phi(1)-\Phi(\frac{u}{\l'}F))+(\Phi(1)-\Phi(\frac{u}{\l'}F))((h^2)_{ij}+g_{ij})\nonumber \\
		=&\Phi'(\frac{u}{\l'}F)\(F\nabla_j\nabla_i (\frac{u}{\l'})+2\nabla_{(i}(\frac{u}{\l'})\nabla_{j)}F+\frac{u}{\l'}\nabla_j\nabla_i F\)\nonumber\\
		  &+\Phi''(\frac{u}{\l'}F)\( F^2\nabla_i(\frac{u}{\l'})\nabla_j(\frac{u}{\l'})+\frac{u^2}{\l'^2}\nabla_i F\nabla_j F+2\frac{u}{\l'}F\nabla_{(i}(\frac{u}{\l'})\nabla_{j)}F\)\nonumber\\
		  &+(\Phi(1)-\Phi(\frac{u}{\l'}F))((h^2)_{ij}+g_{ij}).
		\end{align}
		Combining the Gauss and Codazzi equations in hyperbolic space, we obtain the following generalized Simons' identity (see e.g., \cite{And94}):
		\begin{align*}
		\nabla_{(i}\nabla_{j)}h_{kl}=\nabla_{(k}\nabla_{l)}h_{ij}+((h^2)_{kl}+g_{kl})h_{ij}-h_{kl}((h^2)_{ij}+g_{ij}),
		\end{align*}
		where the brackets denote symmetrization. This yields
		\begin{align}\label{s3:Sims}
		\nabla_j\nabla_i F=& \nabla_j (\dot{F}^{kl}\nabla_i h_{kl}) \nonumber\\
		                  =&\dot{F}^{kl}\nabla_j\nabla_i h_{kl}+\ddot{F}^{kl,pq}\nabla_i h_{kl} \nabla_j h_{pq}\nonumber\\
		=&\dot{F}^{kl}\nabla_k \nabla_l h_{ij} + \dot{F}^{kl}((h^2)_{kl}+g_{kl})h_{ij}-F((h^2)_{ij}+g_{ij})+\ddot{F}^{kl,pq}\nabla_i h_{kl}\nabla_j h_{pq},
		\end{align}
		where we used the $1$-homogeneity of $F$. On the other hand, substituting \eqref{s2:2.1} and \eqref{s2:2.2} into \eqref{hessian-static-function}, we deduce that
		\begin{align}\label{s3:hessian-static-function}
		\nabla_j\nabla_i(\frac{u}{\l'})=&\frac{1}{\l'}\langle \l\partial_r,\nabla h_{ij}\rangle+(1+\frac{u^2}{\l'^2})h_{ij}-\frac{u}{\l'}((h^2)_{ij}+g_{ij})-\frac{2}{\l'}\nabla_{(i} \l' \nabla_{j)} (\frac{u}{\l'}).
		\end{align}
		Putting \eqref{s3:Sims} and \eqref{s3:hessian-static-function} into \eqref{s3:evol-hij-1}, in view of \eqref{s2:notation-I}, we obtain 
		\begin{align*}
		\frac{\partial}{\partial t}h_{ij}
		=&\dot{\Phi}^{kl}(\frac{u}{\l'}F)\nabla_k \nabla_l h_{ij}+\ddot{\Phi}^{kl,pq}(\frac{u}{\l'}F)\nabla_i h_{kl}\nabla_j h_{pq} +\Phi'(\frac{u}{\l'}F)\frac{F}{\l'}\langle \l\partial_r,\nabla h_{ij}\rangle\nonumber\\
		 & -2\Phi'(\frac{u}{\l'}F)\(\frac{F}{\l'}\nabla_{(i}\l'\nabla_{j)}(\frac{u}{\l'})-\nabla_{(i}(\frac{u}{\l'})\nabla_{j)}F\) \nonumber\\
		&+\Phi''(\frac{u}{\l'}F)\( F^2\nabla_i(\frac{u}{\l'})\nabla_j(\frac{u}{\l'})+2\frac{u}{\l'}F\nabla_{(i}(\frac{u}{\l'})\nabla_{j)}F\) \nonumber\\
		&+\Phi'(\frac{u}{\l'}F)\(\frac{u}{\l'}\dot{F}^{kl}((h^2)_{kl}+g_{kl})+F(1+\frac{u^2}{\l'^2}) \)h_{ij} \nonumber\\
		&+\(\Phi(1)-\Phi(\frac{u}{\l'}F)-2\Phi'(\frac{u}{\l'}F)\frac{u}{\l'}F\)((h^2)_{ij}+g_{ij}).
		\end{align*}
		
		\item Combining \eqref{s3:3.1}, \eqref{evol-static-function-I} with \eqref{evol-second-fundamental-form-I}, we get
		\begin{align*}
	\frac{\partial}{\partial t}S_{ij}=&\dot{\Phi}^{kl}(\frac{u}{\l'}F)\nabla_k \nabla_l S_{ij}+ \ddot{\Phi}^{kl,pq}(\frac{u}{\l'}F)\nabla_i h_{kl}\nabla_j h_{pq} +\Phi(\frac{u}{\l'}F)\frac{F}{\l'}\langle \l\partial_r,\nabla S_{ij}\rangle \nonumber\\
	&-2\Phi'(\frac{u}{\l'}F)\(\frac{F}{\l'}\nabla_{(i}\l'\nabla_{j)}(\frac{u}{\l'})-\nabla_{(i}(\frac{u}{\l'})\nabla_{j)}F\) \nonumber \\ &+\Phi''(\frac{u}{\l'}F)\( F^2\nabla_i(\frac{u}{\l'})\nabla_j(\frac{u}{\l'})+2\frac{u}{\l'}F\nabla_{(i}(\frac{u}{\l'})\nabla_{j)}F\) \nonumber\\
	&-\dot{\Phi}^{kl}(\frac{u}{\l'}F)\frac{2}{\l'}\nabla_k(\frac{u}{\l'})\nabla_l\l' g_{ij}+\(\Phi(1)-\Phi(\frac{u}{\l'}F)-2\Phi'(\frac{u}{\l'}F)\frac{u}{\l'}F\)((S^2)_{ij}+2\frac{u}{\l'}S_{ij})\nonumber\\
	&+\(\Phi'(\frac{u}{\l'}F)\(\frac{u}{\l'}\dot{F}^{kl}((h^2)_{kl}+g_{kl})+F(1+\frac{u^2}{\l'^2})\)-2\frac{u}{\l'}(\Phi(1)-\Phi(\frac{u}{\l'}F))\)S_{ij},
		\end{align*}
		where we also used $h_{ij}=S_{ij}+\frac{u}{\l'}g_{ij}$ and $(h^2)_{ij}=(S^2)_{ij}+2\frac{u}{\l'}S_{ij}+\frac{u^2}{\l'^2}g_{ij}$.
	\end{enumerate}
\end{proof}

\begin{lem}
Along the flow \eqref{s2:general-form-flow-II}, i.e., $\mathcal{F}=\Phi(\frac{\l'}{u})-\Phi(F)$, we have the following evolution equations.
\begin{enumerate}[(i)]
	\item 
	\begin{align}\label{evol-static-function-II}
		\frac{\partial}{\partial t}(\frac{u}{\l'})=&\dot{\Phi}^{kl}(F)\nabla_k\nabla_l (\frac{u}{\l'})+\dot{\Phi}^{kl}(F)\frac{2}{\l'}\nabla_k(\frac{u}{\l'})\nabla_l \l'+\Phi'(\frac{\l'}{u})\frac{\l'}{u^2}\langle \l\partial_r,\nabla (\frac{u}{\l'})\rangle \nonumber \\
	&+(1-\frac{u^2}{\l'^2})(\Phi(\frac{\l'}{u})-\Phi(F))-(1+\frac{u^2}{\l'^2})\Phi'(F)F+\frac{u}{\l'}\Phi'(F)\dot{F}^{kl}((h^2)_{kl}+g_{kl}).
	\end{align}
	\item 
	\begin{align}\label{evol-second-fundamental-form-II}
	\frac{\partial}{\partial t}h_{ij}=&\dot{\Phi}^{kl}(F)\nabla_k\nabla_l h_{ij}+\ddot{\Phi}^{kl,pq}(F)\nabla_i h_{kl}\nabla_j h_{pq}+\Phi'(\frac{\l'}{u})\frac{\l'}{u^2}\langle \l\partial_r,\nabla h_{ij}\rangle \nonumber\\
	&+\Phi'(\frac{\l'}{u})\frac{2}{u}\nabla_{(j}(\frac{\l'}{u})\nabla_{i)}u-\Phi''(\frac{\l'}{u})\nabla_i(\frac{\l'}{u})\nabla_j(\frac{\l'}{u}) \nonumber\\
	&+\(\Phi'(F)\dot{F}^{kl}((h^2)_{kl}+g_{kl})+(1+\frac{\l'^2}{u^2})\Phi'(\frac{\l'}{u})\)h_{ij} \nonumber\\
	&+\(\Phi(\frac{\l'}{u})-\Phi(F)-\Phi'(F)F-\Phi'(\frac{\l'}{u})\frac{\l'}{u}\)((h^2)_{ij}+g_{ij}),
	\end{align}
	\item Taking $S_{ij}=h_{ij}-\frac{u}{\l'}g_{ij}$, we have
	\begin{align}\label{evol-Sij-II}
	 \frac{\partial}{\partial t}S_{ij}=&\dot{\Phi}^{kl}(F)\nabla_k\nabla_l S_{ij}+\ddot{\Phi}^{kl,pq}(F)\nabla_i h_{kl}\nabla_j h_{pq}+\Phi'(\frac{\l'}{u})\frac{\l'}{u^2}\langle \l\partial_r,\nabla S_{ij}\rangle   \nonumber\\
	 &+\Phi'(\frac{\l'}{u})\frac{2}{u}\nabla_{(j}(\frac{\l'}{u})\nabla_{i)}u-\Phi''(\frac{\l'}{u})\nabla_i(\frac{\l'}{u})\nabla_j(\frac{\l'}{u})-\dot{\Phi}^{kl}(F)\frac{2}{\l'}\nabla_{k}(\frac{u}{\l'})\nabla_l \l' g_{ij}  \nonumber\\
	 &+\(\Phi(\frac{\l'}{u})-\Phi(F)-\Phi'(F)F-\Phi'(\frac{\l'}{u})\frac{\l'}{u} \)((S^2)_{ij}+2\frac{u}{\l'}S_{ij}) \nonumber\\
	 &+\(\Phi'(F)\dot{F}^{kl}((h^2)_{kl}+g_{kl})+(1+\frac{\l'^2}{u^2})\Phi'(\frac{\l'}{u})-2\frac{u}{\l'}(\Phi(\frac{\l'}{u})-\Phi(F))\) S_{ij}.
	\end{align}
\end{enumerate}
\end{lem}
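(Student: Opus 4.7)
The proof follows the same blueprint as Lemma \ref{s3:lem-evolution-eq-1}, with the speed $\mathcal{F}=\Phi(1)-\Phi(\tfrac{u}{\l'}F)$ replaced by $\mathcal{F}=\Phi(\tfrac{\l'}{u})-\Phi(F)$. I will continue to work in the variable $u/\l'$ (rather than $\l'/u$) in the final expressions, since this matches the form of \eqref{evol-static-function-II}--\eqref{evol-Sij-II}; the reciprocal identity $\nabla_i(\tfrac{\l'}{u})=-(\tfrac{\l'}{u})^{2}\nabla_i(\tfrac{u}{\l'})$ and its second-derivative analogue are used freely to convert between the two.

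For part (i), I first compute $\partial_t\l'=\langle\l\partial_r,\partial_t X\rangle=u\mathcal{F}$. Then, using the conformal Killing property \eqref{s2:conformal-Killing} and the normal-variation formula \eqref{s3:3.2}, I obtain $\partial_t u=\l'\mathcal{F}+\langle\l\partial_r,\nabla\mathcal{F}\rangle$, and expanding $\nabla\mathcal{F}=\Phi'(\tfrac{\l'}{u})\nabla(\tfrac{\l'}{u})-\Phi'(F)\nabla F$ yields the transport term $\Phi'(\tfrac{\l'}{u})\tfrac{\l'}{u^{2}}\langle\l\partial_r,\nabla(\tfrac{u}{\l'})\rangle$ after applying the reciprocal rule. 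Next I compute $\dot\Phi^{kl}(F)\nabla_k\nabla_l\l'$ and $\dot\Phi^{kl}(F)\nabla_k\nabla_l u$ from \eqref{s2:2.1}, \eqref{s2:2.2} together with the $1$-homogeneity of $F$. Combining $\partial_t(\tfrac{u}{\l'})=\tfrac{1}{\l'}\partial_t u-\tfrac{u}{\l'^{2}}\partial_t\l'$ and reorganizing via the Hessian identity \eqref{hessian-static-function} produces exactly \eqref{evol-static-function-II}.

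Part (ii) begins from \eqref{s3:3.4}, namely $\partial_t h_{ij}=-\nabla_j\nabla_i\mathcal{F}+\mathcal{F}((h^{2})_{ij}+g_{ij})$. The term $\nabla_j\nabla_i\Phi(F)$ is expanded via the Simons-type identity \eqref{s3:Sims} exactly as in Lemma \ref{s3:lem-evolution-eq-1}, yielding the diffusion $\dot\Phi^{kl}(F)\nabla_k\nabla_l h_{ij}$, the quadratic gradient term $\ddot\Phi^{kl,pq}(F)\nabla_i h_{kl}\nabla_j h_{pq}$, and the standard curvature corrections from the $1$-homogeneity of $F$. The new ingredient is $\nabla_j\nabla_i\Phi(\tfrac{\l'}{u})=\Phi'(\tfrac{\l'}{u})\nabla_j\nabla_i(\tfrac{\l'}{u})+\Phi''(\tfrac{\l'}{u})\nabla_i(\tfrac{\l'}{u})\nabla_j(\tfrac{\l'}{u})$; I would compute the Hessian of $\l'/u$ by the quotient rule applied to \eqref{s2:2.1} and \eqref{s2:2.2}, and then group so that the $\langle\l\partial_r,\nabla h_{ij}\rangle$-transport term and the $((h^{2})_{ij}+g_{ij})$-coefficient land in the form asserted in \eqref{evol-second-fundamental-form-II}. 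Finally, part (iii) is immediate: combine the formulas from (i) and (ii) with $\partial_t g_{ij}=2\mathcal{F}h_{ij}$ from \eqref{s3:3.1}, substitute $h_{ij}=S_{ij}+\tfrac{u}{\l'}g_{ij}$ and $(h^{2})_{ij}=(S^{2})_{ij}+2\tfrac{u}{\l'}S_{ij}+\tfrac{u^{2}}{\l'^{2}}g_{ij}$, and regroup.

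The main difficulty is bookkeeping rather than conceptual novelty. The Hessian of $\l'/u$ produces several mixed terms in $\nabla u$, $\nabla\l'$ and $h_{ij}$ which must be repackaged into the precise form appearing in \eqref{evol-second-fundamental-form-II} (especially the cross term $\Phi'(\tfrac{\l'}{u})\tfrac{2}{u}\nabla_{(j}(\tfrac{\l'}{u})\nabla_{i)}u$ and the transport factor $\Phi'(\tfrac{\l'}{u})\tfrac{\l'}{u^{2}}$). Carefully tracking powers of $u$ and $\l'$, together with repeated use of the $1$-homogeneity identity $\dot F^{kl}h_{kl}=F$ to shift factors between $\dot F^{kl}g_{kl}$ and $F$, should make the cancellations come out exactly as stated.
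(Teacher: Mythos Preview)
Your plan is essentially identical to the paper's proof: for (i) it computes $\partial_t\l'$ and $\partial_t u$ separately, rewrites them with $\dot{\Phi}^{kl}(F)\nabla_k\nabla_l$ using \eqref{s2:2.1}--\eqref{s2:2.2}, and combines via \eqref{hessian-static-function}; for (ii) it expands $\nabla_j\nabla_i\mathcal{F}$ via \eqref{s3:Sims} and the Hessian identity $\nabla_j\nabla_i(\tfrac{\l'}{u})=\tfrac{1}{u}\nabla_j\nabla_i\l'-\tfrac{\l'}{u^2}\nabla_j\nabla_i u-\tfrac{2}{u}\nabla_{(j}(\tfrac{\l'}{u})\nabla_{i)}u$; and (iii) is the straightforward combination with \eqref{s3:3.1}. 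One caution: your formula $\partial_t u=\l'\mathcal{F}+\langle\l\partial_r,\nabla\mathcal{F}\rangle$ has the wrong sign on the gradient term (from \eqref{s3:3.2} it should be $\partial_t u=\l'\mathcal{F}-\langle\l\partial_r,\nabla\mathcal{F}\rangle$), so be careful when you actually carry out the computation, as otherwise the transport coefficient $\Phi'(\tfrac{\l'}{u})\tfrac{\l'}{u^2}$ will come out with the wrong sign.
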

\begin{proof}
	\begin{enumerate}[(i)]
		\item We have
		\begin{align*}
		\frac{\partial}{\partial t}\l'=\dot{\Phi}^{kl}(F)\nabla_k\nabla_l \l'+u\( \Phi(\frac{\l'}{u})-\Phi(F)-\frac{\l'}{u}\Phi'(F)\dot{F}^{kl}g_{kl}+\Phi'(F)F \),
		\end{align*}
		and
		\begin{align*}
		\frac{\partial}{\partial t}u=&\dot{\Phi}^{kl}(F)\nabla_k\nabla_l u-\Phi'(\frac{\l'}{u})\langle \l\partial_r,\nabla (\frac{\l'}{u})\rangle \\
		&+\l'\(\Phi(\frac{\l'}{u})-\Phi(F)-\Phi'(F)F+\frac{u}{\l'}\Phi'(F)\dot{F}^{kl}(h^2)_{kl}\).
		\end{align*}
		Combining these two equations with \eqref{hessian-static-function}, we derive
		\begin{align*}
		\frac{\partial}{\partial t}(\frac{u}{\l'})=&\dot{\Phi}^{kl}(F)\nabla_k\nabla_l (\frac{u}{\l'})+\dot{\Phi}^{kl}(F)\frac{2}{\l'}\nabla_k(\frac{u}{\l'})\nabla_l \l'+\Phi'(\frac{\l'}{u})\frac{\l'}{u^2}\langle \l\partial_r,\nabla (\frac{u}{\l'})\rangle \\
		&+(1-\frac{u^2}{\l'^2})(\Phi(\frac{\l'}{u})-\Phi(F))-(1+\frac{u^2}{\l'^2})\Phi'(F)F+\frac{u}{\l'}\Phi'(F)\dot{F}^{kl}((h^2)_{kl}+g_{kl}).
		\end{align*}
	    \item By \eqref{s3:3.4}, we have
	    \begin{align}\label{s3:evol-hij-2}
	    \frac{\partial}{\partial t}h_{ij}=&-\nabla_j\nabla_i(\Phi(\frac{\l'}{u})-\Phi(F))+(\Phi(\frac{\l'}{u})-\Phi(F))((h^2)_{ij}+g_{ij})\nonumber\\
	    =&\Phi'(F)\nabla_j\nabla_i F+\Phi''(F)\nabla_j F\nabla_i F-\Phi'(\frac{\l'}{u})\nabla_j\nabla_i(\frac{\l'}{u})-\Phi''(\frac{\l'}{u})\nabla_j(\frac{\l'}{u})\nabla_i(\frac{\l'}{u})\nonumber\\
	     &+(\Phi(\frac{\l'}{u})-\Phi(F))((h^2)_{ij}+g_{ij}).
	    \end{align}
	    We also have 
	    \begin{align}\label{s3:hessian-static-function-II}
	    \nabla_j \nabla_i (\frac{\l'}{u})=&\frac{1}{u}\nabla_j \nabla_i \l'-\frac{\l'}{u^2}\nabla_j \nabla_i u-\frac{2}{u}\nabla_{(j}(\frac{\l'}{u})\nabla_{i)} u\nonumber\\
	    =&\frac{\l'}{u}((h^2)_{ij}+g_{ij})-(1+\frac{\l'^2}{u^2})h_{ij}-\frac{\l'}{u^2}\langle \l\partial_r,\nabla h_{ij}\rangle-\frac{2}{u}\nabla_{(j}(\frac{\l'}{u})\nabla_{i)} u.
	    \end{align}
	    Substituting \eqref{s3:Sims} and \eqref{s3:hessian-static-function-II} into \eqref{s3:evol-hij-2}, in view of \eqref{s2:notation-II}, we derive
	    \begin{align*}
	     \frac{\partial}{\partial t}h_{ij}=&\dot{\Phi}^{kl}(F)\nabla_k\nabla_l h_{ij}+\ddot{\Phi}^{kl,pq}(F)\nabla_i h_{kl}\nabla_j h_{pq}+\Phi'(\frac{\l'}{u})\frac{\l'}{u^2}\langle \l\partial_r,\nabla h_{ij}\rangle \\
	     &+\Phi'(\frac{\l'}{u})\frac{2}{u}\nabla_{(j}(\frac{\l'}{u})\nabla_{i)}u-\Phi''(\frac{\l'}{u})\nabla_i(\frac{\l'}{u})\nabla_j(\frac{\l'}{u}) \\
	    &+\(\Phi'(F)\dot{F}^{kl}((h^2)_{kl}+g_{kl})+(1+\frac{\l'^2}{u^2})\Phi'(\frac{\l'}{u})\)h_{ij} \\
	    &+\(\Phi(\frac{\l'}{u})-\Phi(F)-\Phi'(F)F-\Phi'(\frac{\l'}{u})\frac{\l'}{u}\)((h^2)_{ij}+g_{ij}).
	    \end{align*}
	    
	    \item Using \eqref{s3:3.1}, \eqref{evol-static-function-II} and \eqref{evol-second-fundamental-form-II}, we obtain
	    \begin{align*}
	    \frac{\partial}{\partial t}S_{ij}=&\dot{\Phi}^{kl}(F)\nabla_k\nabla_l S_{ij}+\ddot{\Phi}^{kl,pq}(F)\nabla_i h_{kl}\nabla_j h_{pq}+\Phi'(\frac{\l'}{u})\frac{\l'}{u^2}\langle \l\partial_r,\nabla S_{ij}\rangle  \\
	    &+\Phi'(\frac{\l'}{u})\frac{2}{u}\nabla_{(j}(\frac{\l'}{u})\nabla_{i)}u-\Phi''(\frac{\l'}{u})\nabla_i(\frac{\l'}{u})\nabla_j(\frac{\l'}{u})-\dot{\Phi}^{kl}(F)\frac{2}{\l'}\nabla_{k}(\frac{u}{\l'})\nabla_l \l' g_{ij}\\
	    &+\(\Phi(\frac{\l'}{u})-\Phi(F)-\Phi'(F)F-\Phi'(\frac{\l'}{u})\frac{\l'}{u} \)((S^2)_{ij}+2\frac{u}{\l'}S_{ij})\\
	    &+\(\Phi'(F)\dot{F}^{kl}((h^2)_{kl}+g_{kl})+(1+\frac{\l'^2}{u^2})\Phi'(\frac{\l'}{u})-2\frac{u}{\l'}(\Phi(\frac{\l'}{u})-\Phi(F))\) S_{ij}.
	    \end{align*}
	    \end{enumerate}
\end{proof}

\section{$C^0$-estimates} \label{sec:4}
Along the general flow \eqref{s3:evol-general-flow}, if the evolving hypersurfaces $M_t, t\in [0,T^\ast)$ are star-shaped, we may parametrize each of them as a graph 
$$
M_t=\{(r(\t,t),\t)\in \mathbb R^+\times \mathbb S^n ~|~\t\in \mathbb S^n \},
$$ 
where $r(\cdot,t)$ is the radial function of $M_t$ defined on $\mathbb S^n$. Let $\varphi(\t,t)=\psi(r(\t,t))$. Then the flow \eqref{s3:evol-general-flow} can be reduced to a parabolic equation for $\varphi$. As long as the solution of \eqref{s3:evol-general-flow} exists and remains star-shaped, then $\varphi$ satisfies the following initial value problem (see \cite{GL15,Scheuer-Xia2019,Hu-Li-Wei2020}): 
\begin{align}\label{s2:evolution-parametric-form}
\left\{\begin{aligned}
\frac{\partial \varphi}{\partial t}=&\mathcal{F}\frac{v}{\l}, \quad (\t,t)\in \mathbb S^n \times [0,T^\ast), \\
\varphi(\cdot,0)=&\varphi_0(\cdot),\end{aligned} \right.
\end{align}
where $\varphi_0(\t)=\psi(r_0(\t))$ is determined by the radial function $r_0$ of the initial hypersurface $M_0$. The Eq. \eqref{s2:evolution-parametric-form} will be referred as the nonparametric form of the general flow \eqref{s3:evol-general-flow}.

Then the nonparametric form of the flow \eqref{s2:general-form-flow-I} or \eqref{s2:general-form-flow-II} is
	\begin{align}\label{s2:general-flow-I-parametric-form}
	\left\{\begin{aligned}
	\frac{\partial}{\partial t} \varphi=&\(\Phi(1)-\Phi(\frac{\l}{\l'v}F)\)\frac{v}{\l}, \quad (\t,t)\in \mathbb S^n \times [0,T^\ast), \\
	\varphi(\cdot,0)=&\varphi_0(\cdot),
	\end{aligned}\right.
	\end{align}
	or
	\begin{align}\label{s2:general-flow-II-parametric-form}
	\left\{\begin{aligned}
	\frac{\partial}{\partial t} \varphi=& \(\Phi(\frac{\l'v}{\l})-\Phi(F)\)\frac{v}{\l}, \quad (\t,t)\in \mathbb S^n \times [0,T^\ast), \\
	\varphi(\cdot,0)=&\varphi_0(\cdot).
	\end{aligned}\right.
	\end{align}
	
	We show that the solution $\varphi$ to \eqref{s2:general-flow-I-parametric-form} or \eqref{s2:general-flow-II-parametric-form} is uniformly bounded from above and below.
	\begin{prop}\label{s3:prop-C0-estimate} Let $\varphi_0$ be determined by the initial hypersurface $M_0$ in $\mathbb H^{n+1}$. If $\varphi=\varphi(\t,t), t\in [0,T^\ast)$ solves the initial value problem \eqref{s2:general-flow-I-parametric-form} or \eqref{s2:general-flow-II-parametric-form}, then
		\begin{align}\label{s2:C0-estimate}
		\min_{\t\in \mathbb S^n} \varphi_0(\t) \leq \varphi(\t,t) \leq \max_{\t\in \mathbb S^n} \varphi_0(\t), \quad \forall (\t,t)\in \mathbb S^n\times [0,T^\ast).
		\end{align}
	\end{prop}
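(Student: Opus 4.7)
The plan is to prove Proposition 3.3 via a direct application of the parabolic maximum principle to the scalar evolution equations \eqref{s2:general-flow-I-parametric-form} and \eqref{s2:general-flow-II-parametric-form}. Since $\varphi = \Psi(r)$ with $\Psi' = 1/\lambda > 0$, spatial extrema of $\varphi(\cdot,t)$ correspond to spatial extrema of $r(\cdot,t)$. I will handle the upper bound first; the lower bound is symmetric.

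Let $\varphi_{\max}(t) := \max_{\theta \in \mathbb{S}^n} \varphi(\theta,t)$, which is Lipschitz in $t$. At a.e.\ $t$, by Hamilton's trick, $\tfrac{d}{dt}\varphi_{\max}(t) = \partial_t\varphi(\theta_t, t)$ where $\theta_t$ realizes the maximum. At $\theta_t$ we have $D\varphi = 0$ and $D^2\varphi \leq 0$, so $v = \sqrt{1+|D\varphi|^2} = 1$, $u = \lambda/v = \lambda$, and from the expression \eqref{s2:2nd-fundamental-form} for the Weingarten operator,
\begin{align*}
h_i^j\big|_{\theta_t} = \frac{\lambda'}{\lambda}\delta_i^j - \frac{1}{\lambda}\sigma^{jk}\varphi_{ki}.
\end{align*}
Since $(\varphi_{ki})\leq 0$, every eigenvalue of $\mathcal{W}$ satisfies $\kappa_i \geq \lambda'/\lambda$. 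Using that $F$ is monotone increasing, $1$-homogeneous, and normalized by $f(1,\ldots,1)=1$ (Assumption \ref{s1:Assumption}(i)),
\begin{align*}
F(\kappa) \geq F\bigl(\tfrac{\lambda'}{\lambda},\ldots,\tfrac{\lambda'}{\lambda}\bigr) = \tfrac{\lambda'}{\lambda},
\end{align*}
and therefore $\tfrac{u}{\lambda'}F = \tfrac{\lambda}{\lambda'}F(\kappa) \geq 1$, while $\tfrac{\lambda'}{u} = \tfrac{\lambda'}{\lambda} \leq F(\kappa)$. Because $\Phi$ is strictly increasing, this gives $\Phi(1) - \Phi(\tfrac{u}{\lambda'}F) \leq 0$ for the flow \eqref{s2:general-form-flow-I} and $\Phi(\tfrac{\lambda'}{u}) - \Phi(F) \leq 0$ for \eqref{s2:general-form-flow-II}. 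In either case $\mathcal{F}\leq 0$ at $\theta_t$, so $\partial_t\varphi(\theta_t,t) = \mathcal{F}\,v/\lambda \leq 0$. Hence $\varphi_{\max}$ is non-increasing, yielding the upper bound.

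The lower bound proceeds identically: at a spatial minimum of $\varphi(\cdot,t)$ one has $D\varphi = 0$, $D^2\varphi \geq 0$, hence $\kappa_i \leq \lambda'/\lambda$ and $F(\kappa)\leq \lambda'/\lambda$. Consequently $\mathcal{F}\geq 0$ at the minimizer for both flows, so $\varphi_{\min}(t) := \min_{\theta}\varphi(\theta,t)$ is non-decreasing.

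There is no genuine obstacle here beyond bookkeeping: the whole argument is a pointwise computation at the extremal point of $\varphi$ combined with Hamilton's Lipschitz trick. The only place that requires care is converting the second-order condition on $\varphi$ into the correct inequality for the eigenvalues of the Weingarten operator, which is immediate from \eqref{s2:2nd-fundamental-form} once one observes that $|D\varphi|$ vanishes at the extremum so that the mixed term $\varphi^j\varphi^k/v^2$ drops out. The monotonicity and normalization of $F$, together with the monotonicity of $\Phi$, then furnish the sign of $\mathcal{F}$ uniformly for both classes of flows \eqref{s2:general-form-flow-I}--\eqref{s2:general-form-flow-II}.
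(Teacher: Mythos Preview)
Your proof is correct and follows essentially the same approach as the paper: both arguments evaluate the scalar equation at a spatial extremum of $\varphi$, use \eqref{s2:2nd-fundamental-form} with $D\varphi=0$ to obtain $\kappa_i \geq \lambda'/\lambda$ (resp.\ $\leq$), and then invoke the monotonicity and normalization of $F$ together with $\Phi'>0$ to conclude that the speed has the required sign. Your version is slightly more explicit in that you treat both flows and both bounds and frame the maximum principle via Hamilton's trick, whereas the paper gives only the upper bound for \eqref{s2:general-flow-I-parametric-form} and declares the rest similar.
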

	\begin{proof}
		We only prove the upper bound of $\varphi$ along the flow \eqref{s2:general-flow-I-parametric-form}, since the remaining proof is similar. At the maximum point of $\varphi$, we have
		\begin{align*}
		D \varphi=0, \quad v=1, \quad D^2\varphi \leq 0.
		\end{align*}
		In view of \eqref{s2:2nd-fundamental-form}, we get
		$$
		h_i^j=\frac{1}{\l}(-\varphi_{i}^{j}+\l'\d_{i}^{j}) \geq \frac{\l'}{\l}\d_{i}^{j}.
		$$
		Now we diagonalize the matrix $(\varphi_i^j)$, then $(h_i^j)=\mrm{diag}(\k_1,\cdots,\k_n)$ with $\k_i\geq \frac{\l'}{\l}$. This yields
		$$
		F(h_i^j)=f(\k_1,\cdots,\k_n) \geq f(\frac{\l'}{\l},\cdots,\frac{\l'}{\l})=\frac{\l'}{\l},
		$$
		where we used (i) in Assumption \ref{s1:Assumption}. As $\Phi'(s)>0$ for all $s>0$, at the maximum point of $\varphi$ we have
		\begin{align*}
		\frac{\partial}{\partial t}\varphi=\frac{1}{\l}\(\Phi(1)-\Phi(\frac{\l}{\l'}F)\)\leq 0.
		\end{align*}
		Then $\varphi(\t,t) \leq \max_{\t\in \mathbb S^n} \varphi_0(\t)$ follows from the maximum principle.
\end{proof}

\section{Preserving of static convexity}$\ $ \label{sec:5} 
In this section, we will use the tensor maximum principle to prove that the {\em static convexity} is preserved along the flow \eqref{s2:general-form-flow-I} or \eqref{s2:general-form-flow-II}, provided that
\begin{enumerate}[(i)]
	\item $\Phi$ satisfies $\Phi'(s)>0$ and $\Phi''(s)s+2\Phi'(s)\geq 0$ for all $s>0$;
	\item $F$ satisfies Assumption \ref{s1:Assumption}.
\end{enumerate}

\begin{rem}	
	(a) Important examples of the function $\Phi$ satisfying $\Phi'(s)>0$ and $\Phi''(s)s+2\Phi'(s)\geq 0$ for all $s>0$ include: (1) $\Phi(s)=s^{p}$ with $p>0$; (2) $\Phi(s)=-s^{-p}$ with $0<p\leq 1$; (3) $\Phi(s)=\ln s$.
	
    (b) Important examples of the curvature function $F$ satisfying Assumption \ref{s1:Assumption} include the curvature quotients $F=(E_{k}/E_{l})^{1/(k-l)}$, $0\leq l<k\leq n$. For more examples, we refer the readers to \cite[Section 2]{And07}.
\end{rem}

For convenience of the readers, we recall the tensor maximum principle, which was first proved by Hamilton \cite{Ham1982} and was generalized by Andrews \cite{And07}.
\begin{thm}[\cite{And07}]\label{thm-2}
	Let $S_{ij}$ be a smooth time-varying symmetric tensor field on a compact manifold $M$ satisfying
	\begin{align*}
	\frac{\partial}{\partial t}S_{ij} = a^{kl} \nabla_k \nabla_l S_{ij}+u^k \nabla_k S_{ij}+N_{ij},
	\end{align*}
	where $a^{kl}$ and $u$ are smooth, $\nabla$ is a (possibly time-dependent) smooth symmetric connection, and $a^{kl}$ is positive definite everywhere. Suppose that
	\begin{align}\label{3.2}
	N_{ij} v^i v^j + \sup_{\L} 2 a^{kl}(2\L_k^p \nabla_l S_{ip}v^i-\L_k^p \L_l^q S_{pq}) \geq 0,
	\end{align}
	whenever $S_{ij}\geq 0$ and $S_{ij}v^j=0$ and $\L$ is an $n\times n$-matrix. If $S_{ij}$ is positive definite everywhere on $M$ at $t=0$ and on $\partial M$ for $0\leq t\leq T$, then it is positive on $M\times [0,T]$.
\end{thm}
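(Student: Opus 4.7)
The plan is to proceed by contradiction, following Hamilton's classical strategy for tensor maximum principles with Andrews' refinement that the null eigenvector is extended with arbitrary first covariant derivatives $\Lambda$. Suppose, seeking a contradiction, that $S_{ij}$ fails to remain positive definite on $M \times [0, T]$. Since $S_{ij}$ is positive definite at $t = 0$ and on $\partial M$ for all $t \in [0, T]$, a continuity argument produces a first time $t_0 \in (0, T]$ and an interior point $x_0 \in M$ at which $S_{ij}(x_0, t_0)$ is merely positive semi-definite, with some unit null vector $v_0$ satisfying $S_{ij}(x_0, t_0)v_0^j = 0$.

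For each $n \times n$ matrix $\Lambda$, I would extend $v_0$ to a smooth local vector field $V$ near $x_0$ with $V(x_0) = v_0$ and $\nabla_k V^p(x_0) = \Lambda_k^p$; such an extension always exists, e.g.\ by prescribing $V$ through its Taylor data in normal coordinates. Introduce the scalar auxiliary function
\begin{align*}
\varphi(x, t) := S_{ij}(x, t) V^i(x) V^j(x).
\end{align*}
Because $S_{ij}(\cdot, t)$ is strictly positive for $t < t_0$ and positive semi-definite at $t = t_0$, we have $\varphi \geq 0$ in a space-time neighborhood of $(x_0, t_0)$ and $\varphi(x_0, t_0) = 0$. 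Hence $(x_0, t_0)$ is a local space-time minimum of $\varphi$, and the first and second derivative tests yield $\partial_t \varphi \leq 0$, $\nabla_k \varphi = 0$, and $a^{kl}\nabla_k\nabla_l\varphi \geq 0$ at that point.

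Substituting the PDE for $S_{ij}$ into $\partial_t \varphi = (\partial_t S_{ij})V^iV^j$, expanding $\nabla_k\nabla_l\varphi$ by the product rule, and using $S_{ij}(x_0, t_0)v_0^j = 0$ to eliminate all terms containing the factor $S_{ij}V^j$ (in particular the second-order term $S_{ij}V^i\nabla_k\nabla_l V^j$) together with $u^k\nabla_k\varphi = 0$ at the minimum, a direct computation would give
\begin{align*}
0 \,\geq\, \partial_t \varphi - a^{kl}\nabla_k\nabla_l\varphi \big|_{(x_0, t_0)} \,=\, N_{ij}v_0^i v_0^j - 2 a^{kl}\bigl(2\Lambda_k^p \nabla_l S_{ip} v_0^i + \Lambda_k^p \Lambda_l^q S_{pq}\bigr).
\end{align*}
Since this holds for every choice of $\Lambda$, and replacing $\Lambda$ by $-\Lambda$ preserves the quadratic piece while reversing the linear one, rewriting the infimum over $\Lambda$ in the form used in the hypothesis then contradicts \eqref{3.2}, provided \eqref{3.2} holds with strict inequality.

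The main obstacle is precisely that \eqref{3.2} is only non-strict, so the argument above yields equality at $(x_0, t_0)$ rather than a contradiction. The standard remedy is a perturbation: apply the preceding argument to the shifted tensor $\tilde S_{ij} := S_{ij} - \varepsilon e^{Ct} g_{ij}$, where $\varepsilon > 0$ is small and $C$ is a large constant chosen so that the evolution equation satisfied by $\tilde S_{ij}$ produces an inhomogeneous term strictly improving \eqref{3.2} on $[0, T]$. Compactness of $M$ and smoothness of $a^{kl}$, $u^k$, $N_{ij}$ and $S_{ij}$ guarantee such a uniform choice of $C$; strict positivity of $\tilde S_{ij}$ then follows from the contradiction argument, and letting $\varepsilon \to 0$ recovers strict positivity of $S_{ij}$ itself on $M \times [0, T]$.
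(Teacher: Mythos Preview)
The paper does not prove this theorem; it is quoted verbatim from Andrews \cite{And07} (as a refinement of Hamilton \cite{Ham1982}) and stated without proof, purely as a tool for the subsequent static-convexity argument in Theorem \ref{thm-static-convexity}. So there is no proof in the paper to compare your attempt against.

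That said, your sketch follows the standard strategy of Andrews' original argument: extend the null eigenvector with arbitrary first covariant derivative $\Lambda$, apply the first- and second-derivative tests to the scalar $\varphi = S_{ij}V^iV^j$ at the first failure point, and then optimize over $\Lambda$. The computation you outline is correct, including the sign bookkeeping via $\Lambda \mapsto -\Lambda$. One word of caution on the perturbation step: when you pass to $\tilde S_{ij} = S_{ij} - \varepsilon e^{Ct}g_{ij}$, the null-vector condition $\tilde S_{ij}v^j = 0$ reads $S_{ij}v^j = \varepsilon e^{Ct}v_i$, so $v$ is no longer a null vector of $S$ and hypothesis \eqref{3.2} does not apply verbatim to $\tilde S$. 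You therefore need an extra line arguing that the structural inequality persists under the perturbation---using smoothness of $N_{ij}$ in $S$, compactness of $M\times[0,T]$, and the freedom to choose $C$ large so that the gain $-C\varepsilon e^{Ct}|v|^2$ dominates the $O(\varepsilon)$ discrepancy---or else replacing the perturbation by a strong-maximum-principle argument. This is a routine technicality rather than a genuine gap, but it deserves explicit mention if you write the proof out in full.
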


The main result of this section is the following.
\begin{thm}\label{thm-static-convexity}
	Assume that $F$ satisfies Assumption \ref{s1:Assumption} and $\Phi$ satisfies $\Phi'(s)>0$ and $\Phi''(s)s+2\Phi'(s)\geq 0$ for all $s>0$. If the initial hypersurface $M_0$ is a closed, static convex hypersurface in $\mathbb H^{n+1}$, then along the flow \eqref{s2:general-form-flow-I} or \eqref{s2:general-form-flow-II}, the evolving hypersurface $M_t$ becomes strictly static convex for $t>0$.
\end{thm}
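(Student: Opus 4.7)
The plan is to apply Andrews' tensor maximum principle (Theorem~\ref{thm-2}) to the symmetric two-tensor $S_{ij}=h_{ij}-(u/\l')g_{ij}$, whose evolution has already been computed: \eqref{evol-Sij-I} for the flow \eqref{s2:general-form-flow-I} and \eqref{evol-Sij-II} for the flow \eqref{s2:general-form-flow-II}. Both take the schematic form $\partial_t S_{ij}=a^{kl}\nabla_k\nabla_l S_{ij}+b^k\nabla_k S_{ij}+N_{ij}$, with leading coefficient $a^{kl}=\dot\Phi^{kl}(\cdot)$ positive definite (by $\Phi'>0$ and Assumption~\ref{s1:Assumption}(i)(1)) and $N_{ij}$ collecting all algebraic and quadratic-gradient reaction terms. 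Preservation of $S_{ij}\geq 0$ then reduces to checking the null-eigenvector condition $N_{ij}v^iv^j+\sup_{\L}2a^{kl}(2\L_k^p\nabla_l S_{ip}v^i-\L_k^p\L_l^q S_{pq})\geq 0$ at every point at which $S\geq 0$ has a null eigenvector $v$.

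At such a point I would choose an orthonormal frame diagonalizing $h$ with $v=e_1$, so $h_{ij}=\k_i\d_{ij}$, $\k_1=u/\l'$, $S_{11}=0$, and $S_{ii}=\k_i-u/\l'\geq 0$ for $i\geq 2$. All algebraic contributions to $N_{11}$ proportional to $S_{11}$ or $(S^2)_{11}$ drop out. What survives in $N_{11}$ splits into (a) the Hessian-of-$F$ piece $\ddot\Phi^{kl,pq}(\cdot)\nabla_1 h_{kl}\nabla_1 h_{pq}$, which by \eqref{s2:second-derivative-expr} decomposes into a diagonal part $\ddot f^{kp}\nabla_1 h_{kk}\nabla_1 h_{pp}$, an off-diagonal part $2\sum_{k>p}\tfrac{\dot f^k-\dot f^p}{\k_k-\k_p}(\nabla_1 h_{kp})^2$, and a $\Phi''(\cdot)(\nabla_1 F)^2$ piece; (b) the mixed-gradient terms in $\nabla(u/\l')$, $\nabla F$, $\nabla\l'$; and (c) the trace piece $-\dot\Phi^{kl}(\cdot)(2/\l')\nabla_k(u/\l')\nabla_l\l'$. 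In parallel, the $\sup_\L$ correction forces $\nabla_k S_{11}=0$ at the point (otherwise the sup is $+\infty$, trivially closing the estimate), and under this constraint it evaluates to the non-negative quadratic $2\sum_k\dot\Phi^{kk}(\cdot)\sum_{p\geq 2}(\nabla_k h_{1p})^2/(\k_p-\k_1)$.

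The crux is to assemble all these pieces into a manifestly non-negative expression. The Codazzi identity $\nabla_k h_{ij}=\nabla_i h_{kj}$ combined with $\nabla_k S_{11}=0$ gives the key relations $\nabla_k h_{11}=\nabla_k(u/\l')$ and $\nabla_1 h_{1p}=\nabla_p(u/\l')$, which allow the $k=1$ block of the sup correction and the mixed terms in (b) to be rewritten purely in terms of $\nabla(u/\l')$ and of $\nabla_1 h_{kp}$ with $k,p\geq 2$. The off-diagonal block in (a) then combines with the matching block of the sup, producing an expression whose coefficient is exactly of the shape controlled by the inverse-concavity inequality \eqref{f-inverse-concave-II} and is therefore $\geq 0$. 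The diagonal block $\ddot f^{kp}\nabla_1 h_{kk}\nabla_1 h_{pp}$ is $\leq 0$ by concavity \eqref{f-concave}, and combines with the $\Phi''(\nabla_1 F)^2$ piece; here the hypothesis $\Phi''(s)s+2\Phi'(s)\geq 0$ (equivalently, $s\Phi(s)$ convex on $(0,\infty)$) is precisely what turns this into a non-negative quadratic in $(\nabla_1 h_{kk})_k$. Finally, the cross terms involving $\nabla\l'\cdot\nabla(u/\l')$ in (b), (c) are absorbed via the hyperbolic-geometry identities of Lemma~\ref{lem-1} and the Hessian formula \eqref{s3:hessian-static-function}. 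The argument for flow \eqref{s2:general-form-flow-II} is structurally identical, with \eqref{s3:hessian-static-function-II} replacing \eqref{s3:hessian-static-function} and the arguments of $\dot\Phi,\ddot\Phi$ changed accordingly.

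The main obstacle is this simultaneous bookkeeping: unlike in a pure curvature flow, the evolution of $S$ intertwines the Hessian of $h$ with the Hessian of the extrinsic function $u/\l'$, so all three structural hypotheses (concavity and inverse concavity of $f$, together with the condition on $\Phi$) must be invoked in a single combined estimate, interleaved with Codazzi and the hyperbolic space identities. Once the null-eigenvector condition is verified, the tensor maximum principle gives $S_{ij}\geq 0$ for all $t\geq 0$; the strict static convexity for $t>0$ then follows from a strong-maximum-principle upgrade applied to the linear parabolic system for $S_{ij}$ (or by perturbing $S_{ij}\mapsto S_{ij}+\e e^{Kt}g_{ij}$ and letting $\e\to 0$), which is standard since $S_{ij}$ is a positive definite tensor on any geodesic sphere.
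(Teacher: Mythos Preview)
Your overall framework is correct---apply the tensor maximum principle to $S_{ij}$ using the evolution equations \eqref{evol-Sij-I}, \eqref{evol-Sij-II}---and the paper does exactly this. However, your account of how the null-eigenvector estimate closes misassigns the roles of the structural hypotheses, and as written the argument would not go through.

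First, the diagonal block $\ddot f^{kl}\nabla_1 h_{kk}\nabla_1 h_{ll}$ needs a \emph{lower} bound, not an upper bound. Concavity \eqref{f-concave} gives $\ddot f^{kl}\leq 0$, which is the wrong direction. The paper instead uses \emph{inverse} concavity of $f$ in the form
\[
\ddot f^{kl}\nabla_1 h_{kk}\nabla_1 h_{ll}\ \geq\ \frac{2}{f}|\nabla_1 F|^2-2\sum_k\frac{\dot f^k}{\k_k}(\nabla_1 h_{kk})^2,
\]
obtained by rewriting $\ddot f$ through the dual $f_\ast$. The hypothesis $\Phi''(s)s+2\Phi'(s)\geq 0$ then enters only as the coarse bound $\Phi''/\Phi'\geq -2/s$, producing a bad term $-\tfrac{2}{f}|\nabla_1 F|^2$ that is \emph{exactly cancelled} by the $+\tfrac{2}{f}|\nabla_1 F|^2$ above. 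There is no mechanism by which the $\Phi$-condition alone turns a non-positive diagonal block into a non-negative quadratic.

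Second, the (b) mixed-gradient terms are not ``absorbed'' by the Hessian identities: they simply vanish at $i=j=1$ because $\nabla_1(u/\l')=\tfrac{\l'\k_1-u}{\l'^2}\nabla_1\l'=0$ once $\k_1=u/\l'$. More importantly, the (c) trace term $-\tfrac{2}{\l'}\dot F^{kl}\nabla_k(u/\l')\nabla_l\l'=-\tfrac{2}{\l'^2}\sum_{l>1}\dot f^l(\k_l-\tfrac{u}{\l'})|\nabla_l\l'|^2$ is genuinely negative and cannot be absorbed by identities. It is compensated by the $k=1$ slice of the sup correction, which (via Codazzi and $\nabla_l h_{11}=\nabla_l(u/\l')$) contributes $+\tfrac{2}{\l'^2}\dot f^1\sum_{l>1}(\k_l-\tfrac{u}{\l'})|\nabla_l\l'|^2$. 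Their sum is $\tfrac{2}{\l'^2}\sum_{l>1}(\dot f^1-\dot f^l)(\k_l-\tfrac{u}{\l'})|\nabla_l\l'|^2\geq 0$, and \emph{this} is the place where concavity (through \eqref{f-concave-II}, i.e.\ $\dot f^1\geq\dot f^l$) is actually used. Without locating this cancellation you cannot close the estimate. The remaining pieces (the $-2\dot f^k/\k_k$ diagonal remnant, the off-diagonal $\ddot F$ block, and the $k,l>1$ part of the sup) do combine via \eqref{f-inverse-concave-II} as you say.

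Finally, for the strict improvement at $t>0$ the paper does not perturb: it invokes the strong maximum principle to get a parallel null field $e_1$ with $\k_1\equiv u/\l'<1$ on all of $M_{t_0}$, and then rules this out because any closed hypersurface in $\mathbb H^{n+1}$ has a point where every $\k_i>1$.
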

\begin{proof}
    Along the flow \eqref{s2:general-form-flow-I} or \eqref{s2:general-form-flow-II}, the flow hypersurface $M_t$ is star-shaped at least for a short time $[0,T')\subset [0,T^\ast)$, where $T^{\ast}$ the maximal existence time of smooth solution of the flow \eqref{s2:general-form-flow-I} or \eqref{s2:general-form-flow-II}, respectively. Let $S_{ij}=h_{ij}-\frac{u}{\l'}g_{ij}$. On $[0,T')$, the static convexity of the flow hypersurface $M_t$ is equivalent to $S_{ij}\geq 0$, which implies that $M_t$ is strictly convex. At the minimum point of $u$ over $M_t$, by \eqref{s2:2.2} we have
    $$
    0=\nabla_i u=\k_i\nabla_i \l'=\k_i \l \nabla_i r, \quad 1\leq i\leq n.
    $$
    It follows from the strict convexity of $M_t$ that $Dr=0$ and $v=\sqrt{1+\l^{-2}|Dr|^2}=1$ at this point. By the expression of the support function $u=\frac{\l}{v}$, we get 
    $$
    \min_{M_t}u=\min_{\t\in \mathbb S^n}\l(r(\t,t)) \geq \min_{\t\in \mathbb S^n}\l(r_0(\t)),
    $$
    due to the $C^0$-estimates of $\varphi$ (Proposition \ref{s3:prop-C0-estimate}). This shows that the support function $u$ admits a uniform positive lower bound which is independent of time. Therefore, it suffices to show that $S_{ij}\geq 0$ on $[0,T')$, and a continuation argument implies that $T'=T^{\ast}$. 
    
    Along the flow \eqref{s2:general-form-flow-I}, by \eqref{evol-Sij-I} we have
	\begin{align}\label{s5:Q1}
	\frac{\partial}{\partial t}S_{ij}=&\dot{\Phi}^{kl}(\frac{u}{\l'}F)\nabla_k \nabla_l S_{ij}+ \ddot{\Phi}^{kl,pq}(\frac{u}{\l'}F)\nabla_i h_{kl}\nabla_j h_{pq} +\Phi(\frac{u}{\l'}F)\frac{F}{\l'}\langle \l\partial_r,\nabla S_{ij}\rangle \nonumber\\
	&-2\Phi'(\frac{u}{\l'}F)\(\frac{F}{\l'}\nabla_{(i}\l'\nabla_{j)}(\frac{u}{\l'})-\nabla_{(i}(\frac{u}{\l'})\nabla_{j)}F\) \nonumber \\ &+\Phi''(\frac{u}{\l'}F)\( F^2\nabla_i(\frac{u}{\l'})\nabla_j(\frac{u}{\l'})+2\frac{u}{\l'}F\nabla_{(i}(\frac{u}{\l'})\nabla_{j)}F\) \nonumber\\
	&-\dot{\Phi}^{kl}(\frac{u}{\l'}F)\frac{2}{\l'}\nabla_k(\frac{u}{\l'})\nabla_l\l' g_{ij}+\(\Phi(1)-\Phi(\frac{u}{\l'}F)-2\Phi'(\frac{u}{\l'}F)\frac{u}{\l'}F\)((S^2)_{ij}+2\frac{u}{\l'}S_{ij})\nonumber\\
	&+\(\Phi'(\frac{u}{\l'}F)\(\frac{u}{\l'}\dot{F}^{kl}((h^2)_{kl}+g_{kl})+F(1+\frac{u^2}{\l'^2})\)-2\frac{u}{\l'}(\Phi(1)-\Phi(\frac{u}{\l'}F))\)S_{ij},
	\end{align}
	while along the flow \eqref{s2:general-form-flow-II}, by \eqref{evol-Sij-II} we have
	\begin{align}\label{s5:Q2}
	\frac{\partial}{\partial t}S_{ij}=&\dot{\Phi}^{kl}(F)\nabla_k\nabla_l S_{ij}+\ddot{\Phi}^{kl,pq}(F)\nabla_i h_{kl}\nabla_j h_{pq}+\Phi'(\frac{\l'}{u})\frac{\l'}{u^2}\langle \l\partial_r,\nabla S_{ij}\rangle   \nonumber\\
	&+\Phi'(\frac{\l'}{u})\frac{2}{u}\nabla_{(j}(\frac{\l'}{u})\nabla_{i)}u-\Phi''(\frac{\l'}{u})\nabla_i(\frac{\l'}{u})\nabla_j(\frac{\l'}{u})-\dot{\Phi}^{kl}(F)\frac{2}{\l'}\nabla_{k}(\frac{u}{\l'})\nabla_l \l' g_{ij}  \nonumber\\
	&+\(\Phi(\frac{\l'}{u})-\Phi(F)-\Phi'(F)F-\Phi'(\frac{\l'}{u})\frac{\l'}{u} \)((S^2)_{ij}+2\frac{u}{\l'}S_{ij}) \nonumber\\
	&+\(\Phi'(F)\dot{F}^{kl}((h^2)_{kl}+g_{kl})+(1+\frac{\l'^2}{u^2})\Phi'(\frac{\l'}{u})-2\frac{u}{\l'}(\Phi(\frac{\l'}{u})-\Phi(F))\) S_{ij}.
	\end{align}
	
	To apply the tensor maximum principle, we need to show that \eqref{3.2} whenever $S_{ij}\geq 0$ and $S_{ij}v^j=0$ (so that $v$ is a null vector of $S$). Let $(x_0,t_0)$ be the point where $S_{ij}$ has a null vector $v$. We choose normal coordinates around $(x_0,t_0)$ such that $g_{ij}=\d_{ij}$ at this point. By continuity, we can assume that the principal curvatures are mutually distinct and in increasing order at $(x_0,t_0)$, that is $\k_1<\k_2<\cdots<\k_n$\footnote{This is possible since for any positive definite symmetric matrix $A$ with $A_{ij}\geq 0$ and $A_{ij}v^iv^j=0$ for some $v\neq 0$, there is a sequence of symmetric matrixes $\{A^{(k)}\}$ approaching $A$, satisfying $A^{(k)}_{ij}\geq 0$ and $A^{(k)}_{ij}v^iv^j=0$ and with each $A^{(k)}$ having distinct eigenvalues. Hence it suffices to prove the result in the case where all of $\kappa_i$ are distinct.}. The null vector condition $S_{ij}v^j=0$ implies that $v=e_1$ and $S_{11}=\k_1-\frac{u}{\l'}=0$ at $(x_0,t_0)$. The terms involving $S_{ij}$ and $(S^2)_{ij}$ satisfy the null vector condition and can be ignored. Moreover, by \eqref{s2:2.1}, \eqref{s2:2.2} and $\k_1=\frac{u}{\l'}$, we have 
	$$
	\nabla_1 (\frac{u}{\l'})=\frac{\l'\k_1-u}{\l'^2}\nabla_1 \l'=0, \quad \nabla_1 (\frac{\l'}{u})=\frac{u-\l'\k_1}{u^2}\nabla_1 \l'=0.
	$$
	Let $Q_1$ and $\hat{Q}_1$ be the remaining terms in the RHS of \eqref{s5:Q1} and \eqref{s5:Q2}, respectively. Since $\Phi'(s)>0$ for all $s>0$, it remains to show that
	\begin{align}\label{s4:Q1}
	\frac{Q_1}{\Phi'(\frac{u}{\l'}F)\frac{u}{\l'}}=&\ddot{F}^{kl,pq}\nabla_1 h_{kl}\nabla_1 h_{pq}+\frac{\Phi''(\frac{u}{\l'}F)}{\Phi'(\frac{u}{\l'}F)}\frac{u}{\l'}|\nabla_1F|^2-\frac{2}{\l'}\dot{F}^{kl}\nabla_k(\frac{u}{\l'})\nabla_l\l' \nonumber\\
	                                  &+2\sup_{\L} \dot{F}^{kl}(2\L_k^p \nabla_l S_{1p}-\L_k^p\L_l^q S_{pq}) \geq 0,
	\end{align}
	or 
	\begin{align}\label{s4:Q1-2}
	\frac{\hat{Q}_1}{\Phi'(F)}=&\ddot{F}^{kl,pq}\nabla_1 h_{kl}\nabla_1 h_{pq}+\frac{\Phi''(F)}{\Phi'(F)}|\nabla_1F|^2-\frac{2}{\l'}\dot{F}^{kl}\nabla_k(\frac{u}{\l'})\nabla_l\l' \nonumber\\
	                     &+2\sup_{\L} \dot{F}^{kl} (2\L_k^p \nabla_l S_{1p}-\L_k^p\L_l^q S_{pq}) \geq 0.
	\end{align}
	Since $\Phi''(s)s+2\Phi'(s)\geq 0$ for all $s>0$, we have
	$$
	\frac{\Phi''(\frac{u}{\l'}F)}{\Phi'(\frac{u}{\l'}F)}\frac{u}{\l'} \geq -\frac{2}{F}, \quad 	\frac{\Phi''(F)}{\Phi'(F)} \geq -\frac{2}{F}.
	$$
	On the other hand, by assumption, $S_{11}=0$ and $\nabla_k S_{11}=0$ at $(x_0,t_0)$, we have
	\begin{align}\label{s4:gradient-term}
	\dot{F}^{kl} (2\L_k^p \nabla_l S_{1p}-\L_k^p\L_l^q S_{pq})=&\sum_{k=1}^{n}\sum_{p=2}^{n}\dot{f}^k(2\L_k^p \nabla_k S_{1p}-(\L_k^p)^2 S_{pp}) \nonumber \\
	=& \sum_{k=1}^{n} \sum_{p=2}^{n}\dot{f}^k \(\frac{(\nabla_k S_{1p})^2}{S_{pp}}-\(\L_k^p-\frac{\nabla_k S_{1p}}{S_{pp}}\)^2 S_{pp}\).
	\end{align}
	Then the supremum of the last line in \eqref{s4:gradient-term} is obtained by choosing $\L_k^p=\frac{\nabla_k S_{1p}}{S_{pp}}$. Thus it suffices to check that
	\begin{align}\label{s4:Q1-I}
	\tilde{Q}_1=\ddot{F}^{kl,rs}\nabla_1 h_{kl}\nabla_1 h_{rs}-\frac{2}{F}|\nabla_1 F|^2-\frac{2}{\l'}\dot{F}^{kl}\nabla_k(\frac{u}{\l'})\nabla_l\l'+2\sum_{k=1}^{n}\sum_{l>1}\dot{f}^k \frac{(\nabla_k S_{1l})^2}{S_{ll}}\geq 0.
	\end{align}
	By Codazzi equations, we have 
	$$
	\nabla_k S_{1l}=\nabla_k h_{1l}-\nabla_k(\frac{u}{\l'})\d_{1l}=\nabla_1 h_{kl}, \quad \forall l>1.
	$$
	Since $0=\nabla_k S_{11}=\nabla_k (h_{11}-\frac{u}{\l'})$ for $k\geq 1$, we deduce that 
	\begin{align}\label{s4:nabla-h11}
	\nabla_k h_{11}=\nabla_k(\frac{u}{\l'})=\frac{\k_k-\frac{u}{\l'}}{\l'}\nabla_k\l', \quad \forall k\geq 1.
	\end{align}
	As in \cite[\S 3]{AW18}, we use \eqref{s2:second-derivative-expr} to express the second derivatives of $F$, and by \eqref{s4:nabla-h11} we compute that
	\begin{align}\label{s4:static-cov1}
	\tilde{Q}_1=&\ddot{f}^{kl}\nabla_1 h_{kk} \nabla_1 h_{ll}+2\sum_{k>l}\frac{\dot{f}^k-\dot{f}^l}{\k_k-\k_l}(\nabla_1 h_{kl})^2-\frac{2}{f}|\nabla_1 F|^2-2\sum_{l>1}\dot{f}^{l}\frac{\k_l-\frac{u}{\l'}}{\l'^2}|\nabla_l\l'|^2  \nonumber\\
	&+2\sum_{l>1}\dot{f}^1 \frac{\k_l-\frac{u}{\l'}}{\l'^2}|\nabla_l\l'|^2+2\sum_{k>1,l>1}\dot{f}^k \frac{(\nabla_1 h_{kl})^2}{\k_l-\frac{u}{\l'}}.
	\end{align}
	To make use of the inverse concavity of $f$, let $\tau_i=\k_i^{-1}$ and $f_{\ast}(\tau)=f(\k)^{-1}$. A direct calculation gives
	\begin{align*}
	\dot{f}^k=f_{\ast}^{-2}\frac{\partial f_\ast}{\partial \tau_k}\frac{1}{\k_k^2},\quad
	\ddot{f}^{kl}=-f_{\ast}^{-2} \frac{\partial^2 f_\ast}{\partial \tau_k\partial \tau_l}\frac{1}{\k_k^2\k_l^2}+2f^{-1}\dot{f}^{k}\dot{f}^{l}-2\frac{\dot{f}^k}{\k_k}\d_{kl}.
	\end{align*}
	By the concavity of $f_\ast$, the first term of \eqref{s4:static-cov1} can be estimated as follows
	\begin{align}\label{s4:inverse-concavity}
	\ddot{f}^{kl}\nabla_1 h_{kk} \nabla_1 h_{ll} \geq 2\(f^{-1}|\nabla_1 F|^2-\sum_{k}\frac{\dot{f}^k}{\k_k}(\nabla_1 h_{kk})^2\).
	\end{align}
	Substituting \eqref{s4:inverse-concavity} into \eqref{s4:static-cov1}, we get
	\begin{align}\label{s4:crucial-ineq}
	\tilde{Q}_1\geq &-2\sum_{k}\frac{\dot{f}^k}{\k_k}(\nabla_1 h_{kk})^2+2\sum_{k>l}\frac{\dot{f}^k-\dot{f}^l}{\k_k-\k_l}(\nabla_1 h_{kl})^2+2\sum_{k>1,l>1}\dot{f}^k \frac{(\nabla_1 h_{kl})^2}{\k_l-\frac{u}{\l'}}  \nonumber\\
	&+2\sum_{l>1}\dot{f}^1 \frac{\k_l-\frac{u}{\l'}}{\l'^2}|\nabla_l\l'|^2 -2\sum_{l>1}\dot{f}^{l}\frac{\k_l-\frac{u}{\l'}}{\l'^2}|\nabla_l\l'|^2  \nonumber\\
	\geq &-2\sum_{k>1}\frac{\dot{f}^k}{\k_k}(\nabla_1 h_{kk})^2-2\sum_{k\neq l>1}\frac{\dot{f}^k}{\k_l}(\nabla_1 h_{kl})^2+2\sum_{k>1,l>1}\dot{f}^k \frac{(\nabla_1 h_{kl})^2}{\k_l-\frac{u}{\l'}}  \nonumber\\
	&+2\sum_{l>1}\dot{f}^1 \frac{\k_l-\frac{u}{\l'}}{\l'^2}|\nabla_l\l'|^2 -2\sum_{l>1}\dot{f}^{l}\frac{\k_l-\frac{u}{\l'}}{\l'^2}|\nabla_l\l'|^2 \nonumber\\
	=   &2\sum_{k>1,l>1}\dot{f}^k\(\frac{1}{\k_l-\frac{u}{\l'}}-\frac{1}{\k_l}\)(\nabla_1 h_{kl})^2+2\sum_{l>1}(\dot{f}^1-\dot{f}^{l})\frac{\k_l-\frac{u}{\l'}}{\l'^2}|\nabla_l\l'|^2 \nonumber\\
	\geq & 0,
	\end{align}
	where we used \eqref{f-inverse-concave-II} in the second inequality, and the last inequality follows from 
	$$
	0<\frac{u}{\l'}=\k_1 <\k_2< \cdots < \k_n,
	$$
	and $0<\dot{f}^n \leq \cdots \leq \dot{f}^1$ by \eqref{f-concave-II} since $F$ is concave. By the tensor maximum principle, the {\em static convexity} is preserved along the flow \eqref{s2:general-form-flow-I} or the flow \eqref{s2:general-form-flow-II}.
	
	We show that $M_t$ is strictly {\em static convex} for $t>0$. By the strong maximum principle, the inequality becomes strict (and hence the strict increase of the theorem is proved) unless there exists a parallel vector field $v=e_1$ such that $S_{ij}v^iv^j=S_{11}=0$ on $M_{t_0}$. Hence, the smallest principal curvature $\k_1$ is $\frac{u}{\l'}$ on $M_{t_0}$ everywhere, which is strictly less than one since $u = \l\langle \partial_r,\nu\rangle \leq \l <\l'$. This contradicts with the fact that on any closed hypersurface in $\H^{n+1}$, there exists at least one point where all the principal curvatures are strictly greater than one. This completes the proof.
\end{proof}

\begin{rem}
	The above argument can be used to show that the static convexity is also preserved along the (purely) inverse curvature flows 
	$$
	\frac{\partial}{\partial t}X(x,t)=\frac{1}{F^p}\nu(x,t), \quad 0<p\leq 1
	$$ 
	in hyperbolic space $\mathbb H^{n+1}$ provided that $F$ satisfies Assumption \ref{s1:Assumption}.
\end{rem}

\section{Longtime existence and exponential convergence}\label{sec:7} 
In this section, we complete the proof of Theorem \ref{s1:main-thm-I}.

We assume that the initial hypersurface $M_0$ is star-shaped, then there exists a point $o$ inside the enclosed domain $\Omega_0$ by $M_0$. We take $o$ as the origin of the hyperbolic space $\mathbb H^{n+1}$, and express the hyperbolic space $\mathbb H^{n+1}$ as a warped product $\R^+\times \mathbb S^n$ with the metric $\-g=dr^2+\l^2(r)\s$, where $r$ is the distance to the origin $o$. As discussed in $\S \ref{s2:sec-2.2}$, we equivalently write the flow equation \eqref{s1:locally-MCF} as a scalar parabolic PDE on $\mathbb S^n$ for the function $\varphi(\cdot,t)$. By \eqref{s2:evolution-parametric-form}, the function $\varphi$ satisfies
\begin{align}\label{s5:evol-varphi-II}
\frac{\partial}{\partial t}\varphi=\frac{v}{\l}-\frac{E_1}{\l'},
\end{align}
where $v=\sqrt{1+|D\varphi|^2}$. By the expression \eqref{s2:2nd-fundamental-form} of the Weingarten matrix $\mathcal{W}=(h_i^j)$, we have
\begin{align*}
E_1=&\frac{\l'}{\l v}-\frac{1}{n\l v}\(\s^{ki}-\frac{\varphi^k\varphi^i}{v^2}\)\varphi_{ik}.
\end{align*}
Then Eq. \eqref{s5:evol-varphi-II} can be rewritten as a scalar parabolic PDE in divergent form
\begin{align}\label{s5:evol-varphi-II-divergent-type}
\frac{\partial}{\partial t}\varphi=&\frac{1}{n\l\l'v}\(\s^{ki}-\frac{\varphi^k\varphi^i}{v^2}\)\varphi_{ik}+\frac{|D\varphi|^2}{\l v} \nonumber\\
=&\divv\(\frac{D\varphi}{n\l\l'v}\)+\((n+1)\l'^2+\l^2\)\frac{|D\varphi|^2}{n\l \l'^2 v},
\end{align}
where we used $v_k=\frac{\varphi^i\varphi_{ik}}{v}$ and
\begin{align}\label{s5:derivative-lambda}
D_i \l'=\l''r_i=\l^2 \varphi_i, \quad  D_i\l=\l'r_i=\l\l'\varphi_i.
\end{align}

The $C^0$-estimate of the solution to Eq. \eqref{s5:evol-varphi-II} follows from Proposition \ref{s3:prop-C0-estimate}. To deduce the $C^1$-estimate of $\varphi$, we first calculate the evolution equation of $|D\varphi|^2$. 
\begin{lem}\label{s5:lem-evol-Dvarphi2} Let $\varphi$ be a solution of \eqref{s5:evol-varphi-II} and assume that $|D \varphi|^2$ attains its maximum at a point $P$. Then at $P$, there holds
	\begin{align}\label{s5:evol-Dvarphi2-II}
	\frac{\partial}{\partial t}|D\varphi|^2 =&\frac{1}{n\l\l'v}\(\s^{ij}-\frac{\varphi^i\varphi^j}{v^2}\)(|D\varphi|^2)_{ij}-\frac{2}{n\l\l'v}\varphi_i^k\varphi_k^i-\frac{2(n-1)}{n\l\l'v}|D\varphi|^2 \nonumber\\
	&-\frac{2\D \varphi |D\varphi|^2}{n\l v}-\frac{2\D \varphi \l |D\varphi|^2}{n\l'^2 v}-\frac{2\l'|D\varphi|^4}{\l v}.
	\end{align}
\end{lem}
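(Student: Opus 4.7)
The plan is to differentiate the nonparametric flow equation \eqref{s5:evol-varphi-II} in space, form $\partial_t|D\varphi|^2 = 2\varphi^l D_l(\partial_t\varphi)$, and evaluate everything at the maximum point $P$. At $P$, the optimality condition $(|D\varphi|^2)_l = 2\varphi^k\varphi_{kl} = 0$ gives the two key identities $\varphi^k\varphi_{kl} = 0$ and $v_l = \varphi^k\varphi_{kl}/v = 0$, which kill most of the extra terms produced by differentiation. Using the decomposition $\partial_t\varphi = v/\lambda - E_1/\lambda'$, the computation reduces to three ingredients: the explicit form $E_1 = \lambda'/(\lambda v) - A^{ki}\varphi_{ki}/(n\lambda v)$ with $A^{ki} := \sigma^{ki} - \varphi^k\varphi^i/v^2$ coming from \eqref{s2:2nd-fundamental-form}; the derivatives $D_l\lambda = \lambda\lambda'\varphi_l$, $D_l\lambda' = \lambda^2\varphi_l$ from \eqref{s5:derivative-lambda}; and the Ricci identity on $\mathbb{S}^n$ for commuting third derivatives of $\varphi$.

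For the $v/\lambda$ piece, $D_l(v/\lambda) = -v\lambda'\varphi_l/\lambda$ at $P$; contracting with $2\varphi^l$ and splitting $v = (1+|D\varphi|^2)/v$ produces $-2\lambda'|D\varphi|^2/(\lambda v) - 2\lambda'|D\varphi|^4/(\lambda v)$, the second of which is already the $|D\varphi|^4$ term in the lemma. For the $E_1/\lambda'$ piece, $D_l(\lambda'/(\lambda v))$ collapses to $-\varphi_l/(\lambda v)$ at $P$ via $\lambda'^2 - \lambda^2 = 1$, and when differentiating $A^{ki}\varphi_{ki}$ the term $(A^{ki})_l\varphi_{ki}$ vanishes at $P$ because each resulting summand contains a factor of the form $\varphi^j\varphi_{mj}=0$. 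The nontrivial step is to rewrite $\varphi^l A^{ki}\varphi_{kil}$ in terms of $(|D\varphi|^2)_{ik}$: using the Ricci identity $\varphi_{ikl} - \varphi_{ilk} = \sigma_{li}\varphi_k - \sigma_{ki}\varphi_l$ on $\mathbb{S}^n$ together with the elementary identity $2\varphi^l\varphi_{lik} = (|D\varphi|^2)_{ik} - 2\varphi_k^l\varphi_{li}$, one obtains
\[
2\varphi^l\varphi_{kil} = (|D\varphi|^2)_{ik} - 2\varphi_k^l\varphi_{li} + 2\varphi_i\varphi_k - 2\sigma_{ki}|D\varphi|^2.
\]
Contracting against $A^{ki}$ and using $A^{ki}\varphi_i\varphi_k = |D\varphi|^2/v^2$, $A^{ki}\sigma_{ki} = n - |D\varphi|^2/v^2$, and $A^{ki}\varphi_k^l\varphi_{li} = \varphi_i^k\varphi_k^i$ at $P$ (the $\varphi^k\varphi^i/v^2$ part drops by $\varphi^k\varphi_{mk}=0$), the collapse $(1+|D\varphi|^2)/v^2 = 1$ yields
\[
2\varphi^l A^{ki}\varphi_{kil} = A^{ki}(|D\varphi|^2)_{ik} - 2\varphi_i^k\varphi_k^i - 2(n-1)|D\varphi|^2,
\]
which supplies the diffusion operator, the $-2\varphi_i^k\varphi_k^i/(n\lambda\lambda' v)$ term, and the sphere-curvature contribution $-2(n-1)|D\varphi|^2/(n\lambda\lambda' v)$ in the lemma.

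Assembling everything and using $E_1 = \lambda'/(\lambda v) - \Delta\varphi/(n\lambda v)$ at $P$ (since $A^{ki}\varphi_{ki}$ reduces to $\Delta\varphi$ there), the $\Delta\varphi$ contributions line up to give the two cross terms $-2\Delta\varphi|D\varphi|^2/(n\lambda v)$ and $-2\lambda\Delta\varphi|D\varphi|^2/(n\lambda'^2 v)$, while three leftover gradient-squared pieces combine into $\frac{2|D\varphi|^2}{\lambda\lambda' v}(-\lambda'^2 + 1 + \lambda^2)$ and vanish identically by $\lambda'^2 - \lambda^2 = 1$. The main obstacle is purely bookkeeping: many parallel terms appear and must be tracked through several differentiations, but the two algebraic collapses $v^2 = 1+|D\varphi|^2$ and $\lambda'^2-\lambda^2=1$, together with the max-point identities $\varphi^k\varphi_{kl}=0$ and $v_l=0$, are exactly what reduce the expression to the clean form stated in \eqref{s5:evol-Dvarphi2-II}.
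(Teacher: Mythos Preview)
Your proposal is correct and follows essentially the same strategy as the paper: differentiate the scalar flow equation, contract with $2\varphi^l$, exploit the maximum-point identities $\varphi^k\varphi_{kl}=0$ and $v_l=0$, and use the Ricci identity on $\mathbb{S}^n$ to convert $\varphi^l A^{ki}\varphi_{kil}$ into the second-order operator acting on $|D\varphi|^2$. The only organizational difference is that the paper first rewrites the equation in the form $\partial_t\varphi=\frac{\Delta\varphi}{n\lambda\lambda'v}-\frac{v_j\varphi^j}{n\lambda\lambda'v^2}+\frac{|D\varphi|^2}{\lambda v}$ (the first line of \eqref{s5:evol-varphi-II-divergent-type}) before differentiating, whereas you keep the decomposition $v/\lambda-E_1/\lambda'$ and apply the product rule to $E_1$ and $1/\lambda'$ separately; this produces three extra $|D\varphi|^2$ terms that you then cancel via $\lambda'^2-\lambda^2=1$, while in the paper's version that cancellation is already baked into the preliminary rewriting.
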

\begin{proof}
	At the maximum point $P$ of $|D\varphi|^2$, we have
	\begin{align*}
	D|D\varphi|^2=0, \quad Dv=0.
	\end{align*} 
	In view of Eq. \eqref{s5:evol-varphi-II-divergent-type}, we derive
	\begin{align}\label{s5:evol-step-1}
	\frac{\partial}{\partial t}|D\varphi|^2=&2\varphi^i D_i \( \frac{\D\varphi}{n\l\l'v}-\frac{v_j\varphi^j}{n\l\l' v^2}+\frac{|D\varphi|^2}{\l v}\) \nonumber\\
	=& 2 \( \frac{ \varphi^i D_i(\D\varphi)}{n\l\l'v}-\frac{\D \varphi |D\varphi|^2}{n\l v}-\frac{\D \varphi \l |D\varphi|^2}{n\l'^2 v}-\frac{v_{ji}\varphi^j\varphi^i}{n\l\l'v^2}-\frac{\l'|D\varphi|^4}{\l v}\).
	\end{align}
	where we used \eqref{s5:derivative-lambda}. Using the Ricci identity on $\mathbb S^n$, we have
	\begin{align*}
	(|D\varphi|^2)_i=&2\varphi^k \varphi_{ki}, \nonumber\\
	(|D\varphi|^2)_{ij}=&2 \varphi_j^k\varphi_{ki}+2\varphi^k \varphi_{kij}\nonumber\\
	=&2 \varphi_j^k\varphi_{ki}+2\varphi^k \(\varphi_{ijk}-\varphi_j \d_k^i+\varphi_k \d_i^j\)\nonumber\\
	=&2 \varphi_j^k\varphi_{ki}+2\varphi^k \varphi_{ijk}-2\varphi_i\varphi_j +2|D\varphi|^2 \d_i^j.
	\end{align*}  
	Then we get
	\begin{align}\label{s5:Ricci-app}
	\varphi^i D_i(\D\varphi)=\frac{1}{2}\D |D\varphi|^2-\varphi_i^k\varphi_k^i-(n-1)|D\varphi|^2.
	\end{align}
	Substituting \eqref{s5:Ricci-app} into \eqref{s5:evol-step-1}, we get
	\begin{align*}
	\frac{\partial}{\partial t}|D\varphi|^2=&\frac{1}{n\l\l'v}\(\D |D\varphi|^2-2\varphi_i^k\varphi_k^i-2(n-1)|D\varphi|^2\)\\
	&-\frac{2\D \varphi |D\varphi|^2}{n\l v}-\frac{2\D \varphi \l |D\varphi|^2}{n\l'^2 v}-\frac{2v_{ji}\varphi^j\varphi^i}{n\l\l'v^2}-\frac{2\l'|D\varphi|^4}{\l v}\\
	=&\frac{1}{n\l\l'v}\(\s^{ij}-\frac{\varphi^i\varphi^j}{v^2}\)(|D\varphi|^2)_{ij}-\frac{2}{n\l\l'v}\varphi_i^k\varphi_k^i-\frac{2(n-1)}{n\l\l'v}|D\varphi|^2 \\
	&-\frac{2\D \varphi |D\varphi|^2}{n\l v}-\frac{2\D \varphi \l |D\varphi|^2}{n\l'^2 v}-\frac{2\l'|D\varphi|^4}{\l v},
	\end{align*}
	where we used $(|D\varphi|^2)_{ij}=(v^2)_{ij}=2v v_{ij}$ since $v_i=0$ at $P$.
\end{proof}

Now we prove the $C^1$-estimate of $\varphi$ and exponential decay of $|D\varphi|^2$. 
\begin{prop}\label{s5:prop-locally-MCF-gradient-estimate}
	Let $\varphi=\varphi(\t,t),t\in[0,T^\ast)$ be a solution of \eqref{s5:evol-varphi-II}. Then there exists a uniform constant $\a>0$ which depends only on the initial hypersurface $M_0$ such that 
	\begin{align}\label{s5:exponential-convergence-II}
	|D\varphi|^2(\t,t) \leq \max_{\t\in \mathbb S^n}|D\varphi|^2(\t,0) e^{-\a t}, \quad t\in [0,T^\ast).
	\end{align}
\end{prop}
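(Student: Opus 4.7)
The plan is to apply the parabolic maximum principle to the evolution equation for $|D\varphi|^2$ obtained in Lemma \ref{s5:lem-evol-Dvarphi2}. Set $w(t) = \max_{\t \in \mathbb S^n} |D\varphi|^2(\t,t)$ and, at each time, fix a spatial maximum point $P(t)$. By Hamilton's trick, differentiating $w$ reduces to evaluating the right-hand side of \eqref{s5:evol-Dvarphi2-II} at $P$. Since the matrix $(\s^{ij} - \varphi^i\varphi^j/v^2)$ is positive definite while $(|D\varphi|^2)_{ij} \leq 0$ at $P$, the leading second-order term in \eqref{s5:evol-Dvarphi2-II} is nonpositive and may be discarded.

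The resulting inequality has two manifestly negative ``good'' terms, $-\tfrac{2}{n\l\l'v}\varphi_i^k\varphi_k^i$ and $-\tfrac{2\l'}{\l v}w^2$, the desired linear term $-\tfrac{2(n-1)}{n\l\l'v}w$, and two indefinite ``bad'' terms $-\tfrac{2\D\varphi\,w}{n\l v}$ and $-\tfrac{2\l\D\varphi\,w}{n\l'^2 v}$. First I would control the bad terms by combining Cauchy--Schwarz $(\D\varphi)^2 \leq n\,\varphi_i^k\varphi_k^i$ with Young's inequality $2|ab| \leq \delta a^2 + \delta^{-1}b^2$; this splits each bad term into a multiple of $\varphi_i^k\varphi_k^i$ and a multiple of $w^2$, both of which can then be absorbed into the two good terms, provided $\delta$ is chosen small enough depending only on the upper bounds of $\l$ and $\l'$. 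Proposition \ref{s3:prop-C0-estimate} yields uniform two-sided bounds on $r(\t,t)$, hence a uniform range for $\l,\l'$, so such a $\delta$ exists and depends only on $M_0$.

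After absorption, what survives is essentially $-\tfrac{2(n-1)}{n\l\l'v}w$, together with strictly negative leftovers of the good terms which I would simply discard. Once a crude uniform upper bound on $w$ is in hand (which already follows from the sign of the inequality just obtained, since the right-hand side is nonpositive), the factor $v = \sqrt{1+w}$ is uniformly bounded above, and using the $C^0$-bounds on $\l, \l'$ I can extract a positive constant $\a$ (depending only on $M_0$) such that
\begin{align*}
\tfrac{d^+}{dt} w(t) \leq -\a\, w(t).
\end{align*}
Standard ODE comparison then yields $w(t) \leq w(0)\,e^{-\a t}$, which is \eqref{s5:exponential-convergence-II}.

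The main obstacle is the absorption step: one has to verify that both pieces of each indefinite $\D\varphi$-cross term can be simultaneously absorbed without flipping the sign of the coefficient on $\varphi_i^k\varphi_k^i$ or on $w^2$, and to keep track of constants carefully so that $\a$ is explicit in terms of the $C^0$ bounds. The remaining pieces — discarding the elliptic term at the maximum and running the Gronwall-type comparison — are routine.
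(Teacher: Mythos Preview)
Your strategy matches the paper's: discard the elliptic term at the spatial maximum, absorb the $\Delta\varphi$ cross terms into the two good terms $-\tfrac{2}{n\lambda\lambda'v}\varphi_i^k\varphi_k^i$ and $-\tfrac{2\lambda'}{\lambda v}w^2$, and then read off exponential decay from the surviving $-\tfrac{2(n-1)}{n\lambda\lambda'v}w$. The paper carries out the absorption by first applying $\varphi_i^k\varphi_k^i\ge\tfrac{1}{n}|\Delta\varphi|^2$ and then completing the square in $\Delta\varphi$, which is nothing but the optimal choice of your Young parameter.

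One point to correct: the absorption does \emph{not} succeed by taking $\delta$ ``small enough depending on the upper bounds of $\lambda,\lambda'$.'' Small $\delta$ handles the $\varphi_i^k\varphi_k^i$ piece but forces the $w^2$ remainder to blow up like $\delta^{-1}$, so you need $\delta$ in a specific window, not arbitrarily small. That window is nonempty precisely because of the pointwise inequality $\lambda<\lambda'$ (i.e.\ $\sinh r<\cosh r$): after completing the square the leftover $w^2$ coefficient is
\[
\frac{(\lambda'^2+\lambda^2)^2}{2\lambda\lambda'^3 v}-\frac{2\lambda'}{\lambda v},
\]
and this is $\le 0$ exactly when $(\lambda'^2+\lambda^2)^2\le 4\lambda'^4$, equivalently $\lambda\le\lambda'$. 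So the simultaneous absorption hinges on this structural inequality of the warping function, not on $C^0$ bounds; the $C^0$ bounds enter only afterward, to convert $\tfrac{2(n-1)}{n\lambda\lambda'v}$ into a uniform $\alpha>0$. With this correction your outline is correct and coincides with the paper's proof.
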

\begin{proof}
	By Lemma \ref{s5:lem-evol-Dvarphi2}, at the maximum point $P$ of $|D\varphi|^2$, we have
	\begin{align*}
	\frac{\partial}{\partial t}|D\varphi|^2 \leq &-\frac{2}{n^2\l\l'v}|\D \varphi|^2-\frac{2(n-1)}{n\l\l'v}|D\varphi|^2 -\frac{2\D \varphi |D\varphi|^2}{n\l v}-\frac{2\D \varphi \l |D\varphi|^2}{n\l'^2 v}-\frac{2\l'|D\varphi|^4}{\l v},
	\end{align*}
	where we used the trace inequality $\varphi_i^k\varphi_k^i \geq \frac{1}{n}|\D \varphi|^2$. Then by completing the square, we obtain
	\begin{align*}
	\frac{\partial}{\partial t}|D\varphi|^2 \leq &-\frac{2}{n\l\l'^2v}\( \frac{\l'|\D \varphi|^2}{n}+(\l'^2+\l^2)\D \varphi |D\varphi|^2+\frac{n(\l'^2+\l^2)^2}{4\l'}|D\varphi|^4\)\nonumber\\
	&-\frac{2(n-1)}{n\l\l'v}|D\varphi|^2 +\(\frac{(\l'^2+\l^2)^2}{2\l\l'^3v}-\frac{2\l'}{\l v}\)|D\varphi|^4 \\
	\leq &-\frac{2(n-1)}{n\l\l'v}|D\varphi|^2,
	\end{align*}
	where we used $\frac{(\l'^2+\l^2)^2}{2\l\l'^3}\leq \frac{2\l'}{\l}$ since $\l'>\l$. Therefore, standard maximum principle yields 
	$$
	|D\varphi|^2 (\t,t)\leq C:=\max_{\t\in\mathbb S^n}|D\varphi|^2(\t,0). 
	$$
	Next, we prove the exponential estimate. Since $v=\sqrt{1+|D\varphi|^2}\leq C$, together with the $C^0$ estimate by Proposition \ref{s3:prop-C0-estimate}, there exists a uniform constant $\a\geq \frac{2(n-1)}{n\l\l'v}>0$ such that 
	\begin{align*}
	\frac{\partial}{\partial t}|D\varphi|^2 \leq -\a |D\varphi|^2.
	\end{align*} 
	and hence \eqref{s5:exponential-convergence-II} follows.
\end{proof}

Since Eq. \eqref{s5:evol-varphi-II} can be rewritten as Eq. \eqref{s5:evol-varphi-II-divergent-type}, then by the classical theory of parabolic PDE in divergent form \cite{Lady68}, the higher regularity a priori estimates of the solution $\varphi$ follow from the uniform gradient estimate in Proposition \ref{s5:prop-locally-MCF-gradient-estimate}. Moreover, the solution exists for all $t\in [0,\infty)$, i.e., $T^\ast=\infty$. The exponential convergence to a geodesic sphere $\partial B_{r_\infty}$ centered at the origin follows from the estimate \eqref{s5:exponential-convergence-II}. Since $W_0^{\l'}(\Omega_t)$ is constant along the flow \eqref{s1:locally-MCF} with $F=E_1$, the radius $r_\infty$ is uniquely determined by $W_0^{\l'}(B_{r_\infty})=W_0^{\l'}(\Omega_0)$, where $\Omega_0$ is the bounded domain enclosed by the initial hypersurface $M_0$. By Theorem \ref{thm-static-convexity}, if the initial hypersurface $M_0$ is static convex, then the flow hypersurface $M_t$ becomes strictly static convex for $t>0$. This completes the proof of Theorem \ref{s1:main-thm-I}.

\section{Applications to geometric inequalities}\label{sec:8} 
In this section, we give the proofs of Theorems \ref{thm-weighted-quermassintegral-ineq-I}, \ref{thm-geometric-ineq} and \ref{thm-geometric-ineq-I}.

\begin{proof}[Proof of Theorem \ref{thm-weighted-quermassintegral-ineq-I}]
	{\bf Case 1.} ($1\leq k\leq n-1$.) We choose $F=E_1$ in the flow \eqref{s1:locally-MCF}. By Theorem \ref{s1:main-thm-I}, the flow hypersurface $M_t$ is static convex and hence it is strictly convex. By \eqref{s6:evol-weighted-curvature-integral}, we have
	\begin{align*}
	\frac{d}{dt}W^{\l'}_0(\Omega_t)=&(n+1)\int_{M_t}\l'\(1-\frac{uE_1}{\l'}\)d\mu_t=0,
	\end{align*}
	and for $1\leq k\leq n-1$,
	\begin{align*}
	\frac{d}{dt}W^{\l'}_{k}(\Omega_t)=&\int_{M_t}( kuE_{k-1}+(n+1-k)\l'E_k)\(1-\frac{uE_1}{\l'}\)d\mu_t \\
	                               =&k\int_{M_t} \frac{u}{\l'}(\l'E_{k-1}-u E_{k-1}E_1)d\mu_t+(n+1-k)\int_{M_t} (\l'E_k-uE_kE_1)d\mu_t \\
	                               \leq &k \int_{M_t} \frac{u}{\l'}(\l'E_{k-1}-u E_k) d\mu_t,
	\end{align*}
	where we used Newton-MacLaurin inequality $E_{k-1} E_1 \geq E_k$ and $E_k E_1 \geq E_{k+1}$ for $1\leq k\leq n-1$ and Minkowski formula \eqref{s2:Minkowski-formula}. 
	
	It follows from \eqref{newton-formula-1}, \eqref{newton-formula-2} and \eqref{s2:2.1} that
	\begin{align}\label{s6:Minkowski}
	\dot{E}_m^{ij} \nabla_i \nabla_j \l'=\dot{E}_m^{ij}(\l'g_{ij}-uh_{ij})=m(\l'E_{m-1}-uE_m), \quad m=1,\cdots,n.
	\end{align} 
	Since $\dot{E}_k^{ij}$ is divergence-free, by \eqref{s6:Minkowski} and integration by parts, we deduce that
	\begin{align}\label{s6:monotone}
	\frac{d}{dt}W^{\l'}_{k}(\Omega_t) \leq& \int_{M_t} \frac{u}{\l'}  \dot{E}_{k}^{ij} \nabla_i\nabla_j\l' d\mu_t \nonumber\\
	= &-\int_{M_t}  \dot{E}_{k}^{ij}\nabla_i(\frac{u}{\l'})\nabla_j\l' d\mu_t \nonumber \\
	= &-\int_{M_t}  \dot{E}_{k}^{ii}\frac{\l'\k_i-u}{\l'^2}|\nabla_i\l'|^2 d\mu_t \leq 0.
	\end{align}
	Here the last inequality follows from the fact that $\l'\k_i-u\geq 0$ and $\dot{E}_k^{ij}$ is positive definite, since hypersurface $M_t$ is static convex. By Theorem \ref{s1:main-thm-I}, we get
	\begin{align*}
	W^{\l'}_{k}(\Omega_0) \geq &W^{\l'}_{k}(B_{r_{\infty}})= h_{k}(r_{\infty}) = h_{k}\circ h_{0}^{-1}(W^{\l'}_{0}(B_{r_\infty})) \nonumber\\
	=&h_{k}\circ h_{0}^{-1}(W^{\l'}_{0}(\Omega_0)), \quad 1\leq k\leq n-1.
	\end{align*}
	
	{\bf Case 2.} ($2\leq k \leq n+1$.) We choose $F=E_{k-1}/E_{k-2}$ in the flow \eqref{s1:SX-ICF}. By Theorem \ref{s1:main-thm-III}, the flow hypersurface $M_t$ is static convex and hence it is strictly convex. In view of \eqref{s6:evol-weighted-curvature-integral}, we have
	\begin{align}\label{s6:case-2-monotonicity-1}
	\frac{d}{dt}W^{\l'}_{0}(\Omega_t)=&(n+1)\int_{M_t}\(\l'\frac{E_{k-2}}{E_{k-1}}-u\) d\mu_t \geq (n+1)\int_{M_t}\(\frac{\l'}{E_1}-u\) d\mu_t \geq 0,
	\end{align}
	where we used Newton-MacLaurin inequality $E_{1}E_{k-2} \geq E_{k-1}$ for $2\leq k \leq n+1$ in the first inequality, and Heintze-Karcher inequality \cite[Theorem 3.5]{Brendle2013} in the second inequality. 
	
	On the other hand, we derive
	\begin{align}\label{s6:case-2-monotonicity-2}
	\frac{d}{dt}W^{\l'}_{k}(\Omega_t)=&\int_{M_t}(k u E_{k-1}+(n+1-k)\l'E_{k})\(\frac{E_{k-2}}{E_{k-1}}-\frac{u}{\l'}\) d\mu_t \nonumber\\
	                                    =&k \int_{M_t} \frac{u}{\l'} \( \l'E_{k-2}-u E_{k-1}\) d\mu_t+(n+1-k)\int_{M_t}\(\l'\frac{E_{k}E_{k-2}}{E_{k-1}}-uE_{k}\)d\mu_t \nonumber\\
	                                    \leq &k \int_{M_t}\frac{u}{\l'} (\l' E_{k-2}-uE_{k-1})d\mu_t,
	\end{align}
	where we used Newton-MacLaurin inequality $E_{k}E_{k-2} \leq E_{k-1}^2$ for $2\leq k \leq n+1$ and Minkowski formula \eqref{s2:Minkowski-formula}. Since $\dot{E}_{k-1}^{ij}$ is divergence-free, by integration by parts and \eqref{s6:Minkowski}, we have
	\begin{align}\label{s6:monotone-locally-MCF}
	\frac{d}{dt}W^{\l'}_{k}(\Omega_t) \leq& \int_{M_t} \frac{k}{k-1}\frac{u}{\l'}  \dot{E}_{k-1}^{ij}\nabla_i\nabla_j\l' d\mu_t \nonumber\\
	= &-\frac{k}{k-1}\int_{M_t}  \dot{E}_{k-1}^{ij}\nabla_i(\frac{u}{\l'})\nabla_j\l' d\mu_t \nonumber \\
	= &-\frac{k}{k-1}\int_{M_t}  \dot{E}_{k-1}^{ii}\frac{\l'\k_i-u}{\l'^2}|\nabla_i\l'|^2 d\mu_t \leq 0,
	\end{align}
	where the last inequality follows from $\l'\k_i -u\geq 0$ and $\dot{E}_{k-1}^{ij}$ is positive definite, since $M_t$ is static convex (by Theorem \ref{s1:main-thm-III}). Combining \eqref{s6:case-2-monotonicity-1} and \eqref{s6:case-2-monotonicity-2} with the convergence of the flow \eqref{s1:SX-ICF} (Theorem B), we deduce that
	\begin{align*}
	W^{\l'}_{k}(\Omega_0) \geq &W^{\l'}_{k}(B_{r_{\infty}})= h_{k}(r_{\infty}) = h_{k}\circ h_{0}^{-1}(W^{\l'}_{0}(B_{r_\infty})) \\
	\geq &h_{k}\circ h_{0}^{-1}(W^{\l'}_{0}(\Omega_0)), \quad 2\leq k\leq n+1.
	\end{align*}
	If equality holds in \eqref{weighted-quermassintegral-ineq-I}, then equality also holds in \eqref{s6:monotone} or \eqref{s6:monotone-locally-MCF}. Since $M_t$ is strictly static convex for $t>0$, this implies that $\nabla\l'\equiv 0$ on $M_t$ and hence $M_t$ must be a geodesic sphere centered at the origin for $t>0$. Therefore, by smooth approximation of $M_t$ as $t\ra 0$, the initial hypersurface $M_0$ is also a geodesic sphere centered at the origin. This completes the proof of Theorem \ref{thm-weighted-quermassintegral-ineq-I}.
\end{proof}

\begin{proof}[Proof of Theorem \ref{thm-geometric-ineq}]
	We choose $F=E_1$ in the flow \eqref{s1:locally-MCF}. In view of \eqref{s6:evol-weighted-curvature-integral}, we have
	\begin{align*}
	\frac{d}{dt}W^{\l'}_{0}(\Omega_t)=&(n+1)\int_{M_t} \l'\(1-\frac{uE_1}{\l'}\)d\mu_t=0,
	\end{align*}
	where we used Minkowski formula \eqref{s2:Minkowski-formula}. On the other hand, we have
	\begin{align}\label{s7:monotonicity-geometric-ineq-I}
	\frac{d}{dt}W_0(\Omega_t)=&(n+1)\int_{M_t} \(1-\frac{uE_1}{\l'}\)d\mu_t \nonumber\\ 
	                         =&\frac{n+1}{n} \int_{M_t} \frac{\D \l'}{\l'} d\mu_t \nonumber\\
	                         =&\frac{n+1}{n} \int_{M_t} \frac{|\nabla \l'|^2}{\l'^2} d\mu_t\geq 0.
	\end{align}
	By Theorem \ref{s1:main-thm-I}, the flow hypersurface $M_t$ converges smoothly to a geodesic sphere $\partial B_{r_\infty}$ centered at the origin, we obtain
	\begin{align*}
	W^{\l'}_{0}(\Omega_0)= &W^{\l'}_{0}(B_{r_{\infty}})=h_0(r_\infty)=h_0\circ f_0^{-1}(W_{0}(B_{r_{\infty}})) \\
			          \geq &h_0\circ f_0^{-1}(W_{0}(\Omega_0)).
	\end{align*} 
	If equality holds in \eqref{geom-ineq-1}, then equality holds in \eqref{s7:monotonicity-geometric-ineq-I}, which implies that the initial hypersurface $M_0$ is a geodesic sphere centered at the origin. This completes the proof of Theorem \ref{thm-geometric-ineq}. 
\end{proof}

\begin{proof}[Proof of Theorem \ref{thm-geometric-ineq-I}]
	
	{\bf Case 1.} ($0\leq k \leq n$ and $m=0$.) This follows immediately from Theorems \ref{thm-weighted-quermassintegral-ineq-I} and \ref{thm-geometric-ineq}, that is,
	$$
	W_{k+1}^{\l'}(\Omega_0)\geq h_{k+1}\circ h_0^{-1}(W_{0}^{\l'}(\Omega_0))\geq h_{k+1}\circ f_0^{-1}(W_{0}(\Omega_0)).
	$$
	Moreover, if equality holds in \eqref{geom-ineq-2} in Case 1, the initial hypersurface $M_0$ is a geodesic sphere centered at the origin. 
	
	{\bf Case 2.} ($1\leq k\leq n$ and $1\leq m\leq k$.) We choose $F=E_{k}/E_{k-1}$ in the flow \eqref{s1:SX-ICF}. By Theorem \ref{s1:main-thm-III}, the flow hypersurface $M_t$ is static convex and hence it is strictly convex.  
	
	By \eqref{s2:variation-quermassintegral} and \eqref{s6:Minkowski}, we derive
	\begin{align}\label{s6:geom-ineq-2}
	\frac{d}{dt}W_m(\Omega_t)=&\frac{n+1-m}{n+1}\int_{M_t} E_m \(\frac{E_{k-1}}{E_{k}}-\frac{u}{\l'}\)d\mu_t  \nonumber\\
	\geq &\frac{n+1-m}{n+1}\int_{M_t}\frac{\l'E_{m-1}-uE_m}{\l'}d\mu_t \nonumber\\
	=&\frac{n+1-m}{m(n+1)}\int_{M_t} \frac{\dot{E}_{m}^{ij}\nabla_i\l'\nabla_j\l'}{\l'^2}d\mu_t\geq 0,
	\end{align}
	where in the first inequality we used $E_m E_{k-1} \geq E_{m-1}E_{k}$ for $1\leq m\leq k$, and the last inequality follows from the positivity of $\dot{E}_m^{ij}$.
	
	On the other hand, by \eqref{s6:evol-weighted-curvature-integral} we have
	\begin{align*}
	\frac{d}{dt}W^{\l'}_{k+1}(\Omega_t)  = & \int_{M_t} \( (k+1)uE_{k}+(n-k)\l'E_{k+1}\)\(\frac{E_{k-1}}{E_{k}}-\frac{u}{\l'}\)d\mu_t\\
	= &(k+1) \int_{M_t} \frac{u}{\l'}(\l' E_{k-1}-u E_{k})d\mu_t+(n-k)\int_{M_t} \(\l'\frac{E_{k+1}E_{k-1}}{E_{k}}-uE_{k+1}\)d\mu_t \\
	\leq & (k+1) \int_{M_t} \frac{u}{\l'} (\l'E_{k-1}-uE_{k})d\mu_t,
	\end{align*}
	where we used Newton-MacLaurin inequality $E_{k+1}E_{k-1}\leq E_{k}^{2}$, and Minkowski formula \eqref{s2:Minkowski-formula} and $E_{n+1}=0$ by convention. Finally, by using \eqref{s6:Minkowski} we get
	\begin{align}\label{s6:Monotonicity-formula}
	\frac{d}{dt}W^{\l'}_{k+1}(\Omega_t) \leq  & -\frac{k+1}{k}\int_{M_t}\dot{E}_{k}^{ij}\nabla_i(\frac{u}{\l'})\nabla_j\l' d\mu_t\nonumber\\
	=&-\frac{k+1}{k} \int_{M_t}\sum_{i}\dot{E}_{k}^{ii}\frac{\l'\k_i-u}{\l'^2}|\nabla_i\l'|^2 d\mu_t \leq 0,
	\end{align}
	where the last inequality follows from $\l'\k_i -u \geq 0$ and $\dot{E}_{k}^{ij}$ is positive definite, since $M_t$ is static convex.
	
	By \eqref{s6:geom-ineq-2}, \eqref{s6:Monotonicity-formula} and the convergence of the flow \eqref{s1:SX-ICF} with $F=E_{k}/E_{k-1}$ (Theorem B), we have
	\begin{align*}
	W^{\l'}_{k+1}(\Omega_0) \geq &W^{\l'}_{k+1}(B_{r_\infty}) = h_{k+1}(r_\infty) =h_{k+1}\circ f_m^{-1}(W_m(B_{r_\infty})) \\
	                        \geq &h_{k+1}\circ f_m^{-1}(W_m(\Omega_0)), \quad 1\leq m\leq k.
	\end{align*}
	If equality holds in \eqref{geom-ineq-2} in Case 2, then equality holds in \eqref{s6:Monotonicity-formula}. Since $M_t$ is strictly static convex for $t>0$, the similar argument as above implies that the initial hypersurface $M_0$ is a geodesic sphere centered at the origin. This completes the proof of Theorem \ref{thm-geometric-ineq-I}.   
\end{proof}

\begin{bibdiv}
\begin{biblist}
%\bibliographystyle{amsplain}
%-------------------------------------------------
\bib{And94}{article}{
	author={Andrews, Ben},
	title={Contraction of convex hypersurfaces in Riemannian spaces},
	journal={J. Differential Geom.},
	volume = {39},
    number = {2},
	pages = {407--431},
	year = {1994},
}

\bib{And07}{article}{
    author={Andrews, Ben},
    title={Pinching estimates and motion of hypersurfaces by curvature functions},
    journal={J. Reine Angew. Math.},
    volume = {608},
%    number = {},
    pages = {17--33},
    year = {2007},
}

\bib{ACW2018}{article}{
	author={Andrews, Ben},
	author={Chen, Xuzhong},
	author={Wei, Yong},
	title={Volume preserving flow and Alexandrov-Fenchel type inequalities in hyperbolic space},
	journal={J. Eur. Math. Soc.(JEMS)},
	eprint={https://doi.org/10.4171/JEMS/1059},
	year={2021},
}

\bib{AHL19}{article}{
	author={Andrews, Ben},
	author={Hu, Yingxiang},
	author={Li, Haizhong},
	title={Harmonic mean curvature flow and geometric inequalities},
	journal={Adv. Math.},
	vulume= {375},
	year={2020},
	eprint={https://doi.org/10.1016/j.aim.2020.107393},
}

\bib{AMZ13}{article}{
	author={Andrews, Ben},
	author={McCoy, James},
	author={Zheng, Yu},
	title={Contracting convex hypersurfaces by curvature},
	journal={Calc. Var. Partial Differential Equations},
	volume = {47},
	pages = {611--665},
	year = {2013},
}

\bib{AW18}{article}{
	author={Andrews, Ben},
	author={Wei, Yong},
	title={Quermassintegral preserving curvature flow in hyperbolic space},
	journal={Geom. Funct. Anal.},
	volume = {28},
	pages = {1183--1208},
	year = {2018},
}

\bib{Brendle2013}{article}{
	author={Brendle, Simon},
	title={Constant mean curvature surfaces in warped product manifolds},
	journal={Publ. Math. de l'IH\'ES},
	volume={117},
	pages={247--269},
	year={2013},

}

\bib{BGL}{article}{
	author={Brendle, Simon},
	author={Guan, Pengfei},
	author={Li, Junfang},
	title={An inverse curvature type hypersurface flow in $\mathbb H^{n+1}$},
    journal={preprint},
}

\bib{Cabez07}{article}{
    author={Cabezas-Rivas, E.},
    author={Miquel, V.},
    title={Volume preserving mean curvature flow in the hyperbolic space},
    journal={Indiana Univ. Math. J.},
    volume = {56},
    number = {2},
    pages = {2061--2086},
    year = {2007},
 }

\bib{deLima-Girao2016}{article}{
	author={de Lima, L.L.},
	author={Girao, F.},
	title={An Alexandrov-Fenchel-type inequality in hyperbolic space with an application to a Penrose inequality},
	journal={Ann. Henri Poincar\'e},
	volume = {17},
	number = {4},
	pages = {979--1002},
	year = {2016},
}

\bib{GeWW14}{article}{
	author={Ge, Yuxin},
	author={Wang, Guofang},
	author={Wu, Jie},
	title={Hyperbolic Alexandrov-Fenchel Quermassintetral inequalities II},
	journal={J. Differential Geom.},
	volume = {98},
	pages = {237--260},
	year = {2014},
}

\bib{Ge-Wang-Wu2015}{article}{
	author={Ge, Yuxin},
	author={Wang, Guofang},
	author={Wu, Jie},
	title={The GBC mass for asymptotically hyperbolic manifolds},
	journal={Math. Z.},
	volume = {281},
	pages = {257--297},
	year = {2015},
}

%\bib{Gerhardt2006}{book}{
%	author={Gerhardt, Claus},
%	title={Curvature Problems},
%	series={Series in Geometry and Topology},
%	volume={39},
%	publisher={International Press, Somerville, MA},
%	date={2006},
%}

\bib{Ge11}{article}{
	author={Gerhardt, Claus},
	title={Inverse curvature flows in hyperbolic space},
	journal={J. Differential Geom.},
    volume={89},
    pages={487--527},
    number={3},
%publisher={Springer},
    year={2011},
}

\bib{Guan12}{article}{
	author={Guan, Pengfei},
	title={Curvature measures, isoperimetric type inequalities and fully nonlinear PDES},
	booktitle={Fully Nonlinear PDEs in Real and Complex Geometry and Optics. Lecture Notes in Mathematics},
	volume={2087},
	pages={47--94},
	publisher={Springer},
	year={2013},
}

\bib{GL09}{article}{
	author={Guan, Pengfei},
	author={Li, Junfang},
	title={The quermassintegral inequalities for k-convex starshaped domains},
	journal={Advances in Mathematics},
	volume = {221},
	%   number = {13},
	pages = {1725--1732},
	year = {2009},
}
\bib{GL15}{article}{
   author={Guan, Pengfei},
   author={Li, Junfang},
   title={A mean curvature type flow in space forms},
   journal={Int. Math. Res. Notices},
   volume = {2015},
%   number = {13},
    pages = {4716--4740},
    year = {2015},
}

\bib{GL19}{article}{
   author={Guan, Pengfei},
   author={Li, Junfang},
   title={Isoperimetric type inequalities and hypersurface flows},
   journal={J. Math. Study},
   volume={54},
   year={2021},
   number={1},
   pages={56-80},  
}

\bib{Ham1982}{article}{
    author={Hamilton, Richard},
    title={Three-manifolds with positive Ricci curvature},
    journal={J. Differential Geom.},
    volume = {17},
    number = {2},
    pages = {255--306},
    year = {1982},
}

\bib{HL19}{article}{
	author={Hu, Yingxiang},
	author={Li, Haizhong},
	title={Geometric inequalities for hypersurfaces with nonnegative sectional curvature in hyperbolic space},
	journal={Calc. Var. Partial Differential Equations},
	volume = {58},
    pages={55},
	year = {2019},	
}

\bib{Hu-Li-Wei2020}{article}{
	author={Hu, Yingxiang},
	author={Li, Haizhong},
	author={Wei, Yong},
	title={Locally constrained curvature flows and geometric inequalities in hyperbolic space},
	journal={Math. Ann.},
	year={2020},
    eprint={https://doi.org/10.1007/s00208-020-02076-4}
}

\bib{Lady68}{book}{
	author={Ladyzenskaya, O.A.},
	author={Solonnikov, V.A.},
	author={Ural’ceva, N. N.},
	title={Linear and Quasi-Linear Equations of Parabolic Type},
	series={Translations of Mathematical Monographs, 23.}
	publisher={Providence, RI: American	Mathematical Society},
	year = {1968},
}

%\bib{Kryl82}{book}{
%	author={Krylov, N.},
%	title={Nonlinear Elliptic and Parabolic Equations of the Second Order},
%	publisher={New York: Springer},
%	year = {1987},
%}

\bib{LWX14}{article}{
	author={Li, Haizhong},
    author={Wei,Yong},
    author={Xiong,Changwei},
 	title={A geometric inequality on hypersurface in hyperbolic space},
 	journal={Adv. Math.},
 	volume = {253},
    number={1},
    pages={152--162},
	year = {2014},
}

\bib{Sant2004}{book}{
	author={Santal\'o, Luis A.},
	title={Integral geometry and geometric probability},
	series={Cambridge Mathematical Library},
	edition={2},
	note={With a foreword by Mark Kac},
	publisher={Cambridge University Press, Cambridge},
	date={2004},
	pages={xx+404},
	%   isbn={0-521-52344-3},
	%   review={\MR{2162874}},
}

\bib{Scheuer-Xia2019}{article}{
	author={Julian, Scheuer},
	author={Xia, Chao},
	title={Locally constrained inverse curvature flows},
	journal={Trans. Amer. Math. Soc.},
	volume={372},
	number={10},
	year={2019},
	pages={6771--6803},
}

\bib{WX14}{article}{
	author={Wang, Guofang},
	author={Xia, Chao},
	title={Isoperimetric type problems and Alexandrov-Fenchel type inequalities in the hyperbolic space},
	journal={Adv. Math.},
	volume={259},
    number = {13},
	pages = {532--556},
	year = {2014},
}

\bib{Xia2016}{article}{
		author={Xia, Chao},
		title={A Minkowski type inequality in space forms},
		journal={Calc. Var. Partial Differ. Equations},
		volume={55},
    	pages = {Art. 96},
        year = {2016},
}

\end{biblist}
\end{bibdiv}

\end{document}